\documentclass[12pt, a4paper, reqno]{amsart}

\usepackage{amsmath, amssymb, amsthm}
\usepackage{tikz}
\usepackage[T1]{fontenc}
\usepackage[utf8]{inputenc}
\usepackage[all]{xy}
\usepackage[
  margin=1.5cm,
  includefoot,
  footskip=30pt,
  ]{geometry}
\usepackage{xcolor}
\usepackage[colorlinks=true,citecolor=cyan,linkcolor=magenta, urlcolor=violet, filecolor=green, backref=page]{hyperref}
\usepackage{hyperref}
\usepackage{indentfirst, caption}

\newcommand{\cR}{\mathcal{R}}

\newcommand{\bC}{\mathbb{C}}
\newcommand{\bF}{\mathbb{F}}

\newcommand{\bQ}{\mathbb{Q}}

\newcommand{\bZ}{\mathbb{Z}}

\newcommand{\Qbar}{\overline{\mathbb{Q}}}

\newcommand{\ff}{\mathfrak{f}}

\newcommand{\fp}{\mathfrak{p}}

\newcommand{\fs}{\mathfrak{s}}

\DeclareMathOperator{\Aut}{Aut}

\DeclareMathOperator{\Gal}{Gal}
\DeclareMathOperator{\GL}{GL}
\DeclareMathOperator{\Hom}{Hom}
\DeclareMathOperator{\Jac}{Jac}

\DeclareMathOperator{\Twist}{Twist}

\DeclareMathOperator{\disc}{disc}

\DeclareMathOperator{\ord}{ord}

\DeclareMathOperator{\tame}{tame}
\DeclareMathOperator{\wild}{wild}

\theoremstyle{plain}
\newtheorem{theorem}{Theorem}[section]
\newtheorem{lemma}[theorem]{Lemma}
\newtheorem{proposition}[theorem]{Proposition}

\newtheorem{definition}[theorem]{Definition}

\theoremstyle{definition} 
\newtheorem{remark}[theorem]{Remark}

\newtheorem{conjecture}{Conjecture}

\theoremstyle{remark} 

\DeclareFontFamily{U}{wncy}{}
\DeclareFontShape{U}{wncy}{m}{n}{<->wncyr10}{}
\DeclareSymbolFont{mcy}{U}{wncy}{m}{n}
\DeclareMathSymbol{\Sha}{\mathord}{mcy}{"58}

\newcommand{\lp}{\left(}
\newcommand{\rp}{\right)}

\newcommand{\lara}[1]{\left\langle #1 \right\rangle}
\newcommand{\lbrb}[1]{\lp #1 \rp}
\newcommand{\lcrc}[1]{\left\{ #1 \right\}}

\newcommand{\quadsym}[2]{\lbrb{\frac{#1}{#2}}}

\title{Goldfeld conjecture for non-hyperelliptic direction}
\author{Keunyoung Jeong}
\address{Department of Mathematics Education, Chonnam National University, 77, Yongbong-ro, Buk-gu, Gwangju 61186, Korea}
\email{keunyoung@jnu.ac.kr}

\author{Junyeong Park}
\address{Department of Mathematics Education, Chonnam National University, 77, Yongbong-ro, Buk-gu, Gwangju 61186, Korea}
\email{junyeongp@gmail.com}

\begin{document}

\begin{abstract}
Since the curve $y^2 = x^6+1$ has a large automorphism group, there exist twist families arising from non-hyperelliptic directions.
In this paper, we give an explicit upper bound on the average analytic rank of such a family, assuming the generalized Riemann hypothesis for the $L$-functions.  Also, we propose an analogue of the Goldfeld conjecture for the family following Katz--Sarnak philosophy.
\end{abstract}

\maketitle

\section{Introduction}

In \cite{Gol79}, Goldfeld conjectured that the distribution of analytic ranks of quadratic twists of the given elliptic curve is governed by the so-called minimalist conjecture.
Let $E$ be an elliptic curve defined by an equation $y^2 = f(x)$ and let $E^{(k)}$ be a quadratic twist defined by
\begin{align*}
    E^{(k)} : ky^2 = f(x), \qquad k \in \bQ^\times/(\bQ^\times)^2.
\end{align*}
Then, Goldfeld's conjecture predicts that 50\% of $E^{(k)}$ have analytic rank $0$, another 50\% of $E^{(k)}$ have analytic rank $1$, and 0\% but infinitely many have analytic rank $\geq 2$.

Even though this conjecture is widely believed to be true and there has been substantial progress on the conjecture \cite{Smi1, Smi2}, a naive analogue of the Goldfeld conjecture for the elliptic curves over number fields or for genus $2$ curves does not hold.
For example, Dokchitser--Dokchitser \cite{DD09} found an elliptic curve over a number field whose quadratic twists always have the root number $+1$.
Klagsbrun--Mazur--Rubin gave a theoretical explanation of the disparity by studying the Selmer group, and suggested a generalization of the Goldfeld conjecture for elliptic curves over a number field \cite[Conjecture 7.12]{KMR13}.

Let $C$ be a hyperelliptic curve defined by the equation $y^2=f(x)$. For $k\in\mathbb{Q}/(\mathbb{Q}^\times)^2$, the curve $C^{(k)}$ defined by $ky^2=f(x)$ is called the hyperelliptic twist of $C$, because it is determined by the hyperelliptic involution and the quadratic extension $\mathbb{Q}(\sqrt{k})/\mathbb{Q}$.
A naive analogue of the Goldfeld conjecture predicts that the distribution of analytic ranks of $C^{(k)}$ follows the 50\%-50\%-0\% rule. 
However, Dokchitser--Dokchitser \cite[Lemma 4.1]{DD16} gave an example of a genus $2$ curve whose hyperelliptic twists have root number $-1$. 
Yu \cite[Theorem 1.1]{Yu16} and Morgan \cite[Theorem 1.2]{Mor19} generalized the result of \cite[Theorem A]{KMR13} to the Jacobian of odd-degree hyperelliptic curves and to principally polarized abelian varieties.
Consequently, their results led to an analogous conjecture for the Jacobians of odd-degree hyperelliptic curves and for principally polarized abelian varieties, in the spirit of \cite[Conjecture 7.12]{KMR13}.

\smallskip 

The main motivation of this paper is to formulate a Goldfeld conjecture for a genus $2$ curve in a \emph{non-hyperelliptic} direction. 
We first note that there are no such directions in the case of elliptic curves.
We recall that 
\begin{align} \label{eqn: twist cohom}
    \mathrm{Twist}(X/\bQ) \cong H^1(G_\bQ, \Aut_{\overline{\bQ}} (X) )
\end{align}
where $\mathrm{Twist}(X/\bQ)$ is the set of twists of $X$ over $\bQ$. 
Since the automorphism group of an elliptic curve is generically (i.e., if the $j$-invariant is not equal to $0$ or $1728$) isomorphic to the cyclic group of order $2$, we have
\begin{align*}
    \mathrm{Twist}(E/\bQ) \cong H^1(G_\bQ, \lcrc{\pm 1}) = \Hom(G_\bQ, \lcrc{\pm 1}) \cong \bQ^\times/(\bQ^\times)^2.
\end{align*}
So the twist of a given elliptic curve $E$ must be one of the quadratic twists $E^{(k)}$.
Therefore, to consider the different directions of twists, we should consider curves of higher genus with larger automorphism groups.

However, even if we choose a suitable curve with a large automorphism group and consider a non-hyperelliptic twist family, it is insufficient in the view of arithmetic statistics for two reasons. First, if a given twist family of a higher genus curve contains many hyperelliptic twists, then the statistical parameter (e.g., average rank) is largely affected by the arithmetic of hyperelliptic twists.
Therefore, if we want to study the arithmetic statistics on the twists family of a given curve in a different direction, rather than hyperelliptic twists, we should consider a twist family, which is a family of twists where no two distinct elements in the family are hyperelliptic twists of each other.
Second, even if a twist family of hyperelliptic curves that are not hyperelliptic twists of each other, the rank distribution of the family sometimes turns out to be the known problem if the Jacobian of each twist is not simple over a base field.

\smallskip 

By \eqref{eqn: twist cohom}, it is natural to consider a curve with a large automorphism group in order to explore ``another direction''.
Our choice is the curve $C_0: y^2 = x^6 + 1$ over $\bQ$, whose automorphism group has order $24$, which is somewhat the most complicated one in the case of genus $2$ (cf. \cite[Table 1]{Car} and \cite[Table 3]{CL}).
Cardona--Lario \cite{CL} gave a classification of isomorphism classes of twists of the curve.
Let 
\begin{align} \label{eqn: type list}
    \mathcal{T} = \lcrc{I, C_2^A, \cdots, C_2^G, C_3, V_4^A, \cdots, V_4^G, \cdots, D_{12}^A, \cdots, C_2 \times D_{12}}
\end{align}
be a set of conjugacy classes of subgroups of $\Aut(\Aut_{\overline{\bQ}}(C_0)) \cong C_2 \times D_{12} $.
From the result of \cite{CL}, we know that
\begin{align*}
    \Twist(C_0/\bQ) = \bigsqcup_{T \in \mathcal{T} } \Twist_T(C_0/\bQ)
\end{align*}
for
\begin{align*}
    \Twist_T(C_0/\bQ) := \lcrc{C \in \Twist(C_0/\bQ) : \Gal(K/\bQ) \cong T \leq \Aut(\Aut_{\overline{\bQ}}(C))  }
\end{align*}
where $K$ is the defining field of $\Aut_{\overline{\bQ}}(C_0)$
(See section \ref{subsec:CL} for more details).
Furthermore, 
the defining equation of an element in $\Twist_T(C_0/\bQ)$ is one of
\begin{align*}
    ky^2 = f_{T, u, v}(x)
\end{align*}
for $k$ in a certain quotient of $\bQ^\times/(\bQ^\times)^2$, and $u, v$ parameters determined by $T \in \mathcal{T}$.
In \cite[\S 7]{CL}, the authors gave a parameter $(u, v)$, in terms of another parameter $d$. 
For example,
\begin{align*}
    (u, v) = \left\{ \begin{array}{lll}
    (1, 1)     & \textrm{if } T = C_2^A \\
    (d, -3)     & \textrm{if } T = V_4^C \\
    (1, d)     & \textrm{if } T = D_{12}^A.
    \end{array} \right.
\end{align*}
In the case where $T=C_2^A$, the $f_{C_{2}^A, 1, 1}(x)$ is uniquely determined and the parameter $d$ does not appear.
Hence, the twist family $\Twist_{C_2^A}(C_0/\bQ)$ consists of hyperelliptic twists.
Also, $\Twist_{C_2^A}(C_0/\bQ)$ is bijective to a quotient of $\bQ^\times/(\bQ^\times)^2$ where the parameter $k$ varies.
So, $\Twist_{C_2^A}(C_0/\bQ)$ is in some sense $1$-dimensional, and it has only the hyperelliptic direction.

Contrary to the $C_2^A$-case, the $V_4^C$-case has two parameters, $k$ and $d$.
This is in some sense $2$-dimensional, and we have a one-parameter family in \emph{$d$-direction} by choosing $k = 1$.
This family is
\begin{align*}
    y^2 &=54(d^3 + 1) x^6 + 324( d^3 - 1) x^5 + 810(d^3 + 1) x^4 + 1080(d^3 - 1) x^3 \\
        &+ 810(d^3 + 1) x^2 + 324(d^3 - 1) x + 54( d^3 + 1),
\end{align*}
and it satisfies that the elements are not hyperelliptic twists of each other.
The authors and Kwon studied this family in \cite{JKP24}. 
However, their Jacobians are always $\bQ$-isogenous to products of elliptic curves.
Therefore, the rank distribution problem for such twists in \cite{JKP24} is reduced to the Goldfeld conjecture for elliptic curves over $\bQ$,\footnote{Unfortunately, the proof of \cite{JKP24} should be modified. See Remark \ref{rmk: JKP}. }
and the average rank would be $ \frac{1}{2} + \frac{1}{2} = 1$.

The main object of this paper comes from $D_{12}^A$ with $k = 1$.
The equation is given by
\begin{align*}
C_d : y^2 &= \frac{27(d-3)}{d+3}\left(x^6-\frac{12d}{d-3}x^5-5dx^4+\frac{40d^2}{3(d-3)}x^3+\frac{5d^2}{3}x^2-\frac{4d^3}{3(d-3)}x-\frac{d^3}{27}\right)
\end{align*}
when $d \neq 0, \pm 3$.
We will show that no two distinct elements in $\{C_d\}$ are hyperelliptic twists of one another (cf. Proposition \ref{prop: Kd and Cd notwist}).
Therefore, it is reasonable to say that the twist family $\lcrc{C_d}$ is a family of twists of $C_0$ along a non-hyperelliptic direction.
A notable difference compared with the $V_4^C$-case is that the Jacobian of $C_d$ is always simple (cf. Proposition \ref{prop: simple}).
We remark that the defining equation of $C_d$ may not be canonical, but our choices are made uniformly in $d$; see Remark \ref{rmk: choices} for details.

Since the Jacobian $J_d$ of $C_d$ has complex multiplication, the $L$-function of $C_d$ admits an analytic continuation and satisfies the functional equation. 
We denote the analytic rank of $C_d$ by $g_d$.
In this paper, we give an explicit upper bound for the average analytic ranks of the family $C_d$ under the generalized Riemann hypothesis.

Let $S(X)$ be the set of positive square-free integers $d \neq 1, 3$ such that $|d| <  X$.

\begin{theorem} \label{thm: AAR}
Suppose the generalized Riemann hypothesis for the $L$-function of $C_d$ for every $d \in S(X)$.
For every fixed $\sigma > 0$,
\begin{align*}
\frac{1}{|S(X)|} \sum_{d \in S(X)} g_d 
    &\leq \frac{1}{2} +  (6+o(1))\frac{1}{\sigma} + O\lbrb{ \frac{1}{\log X} +  \frac{X^{\frac{3\sigma}{2}- \frac{1}{2}}}{\log X} + \frac{X^{\frac{\sigma}{2} - \frac{1}{2} }   }{\log X}    }.
\end{align*}
\end{theorem}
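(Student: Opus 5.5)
We follow the explicit-formula method of Brumer, Heath-Brown and Young, as adapted to twist families. Since $J_d$ has CM, we will use (from the later sections) that $L(C_d,s)$ equals an $L$-function of a Hecke Grössencharacter $\psi_d$ of a CM field $K$; here $K$ is either a quartic CM field or $\bQ(\sqrt{-3})$ with a degree-two twist. The key structural input we expect is that $\psi_d = \psi_0 \cdot \chi_d$, where $\chi_d$ is a finite-order (cubic or sextic) character attached to the twisting cocycle, and $\chi_d$ varies with $d$ in the $D_{12}^A$-direction. The conductor $N_d$ then satisfies $\log N_d = c\log|d| + O(1)$ for squarefree $d$, with an explicit constant $c$.

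\textbf{Explicit formula.} Fix an even Schwartz test function $\phi$ with $\hat\phi$ supported in $[-\sigma,\sigma]$ and $\phi(0)=1$, $\phi\ge 0$ on $\bR$; the Fejér-type choice is natural. Under GRH all zeros lie on the critical line, so $g_d \le \sum_\gamma \phi(\gamma \log X/2\pi)$. Weil's explicit formula gives
\begin{align*}
\sum_\gamma \phi\Big(\frac{\gamma\log X}{2\pi}\Big) = \hat\phi(0)\frac{\log N_d}{\log X} + \frac12\phi(0) - \frac{2}{\log X}\sum_{p,m} \frac{a_d(p^m)\log p}{p^{m/2}}\hat\phi\Big(\frac{m\log p}{\log X}\Big) + O\Big(\frac{1}{\log X}\Big).
\end{align*}
The $\frac12$ comes from the $p^2$-terms, the symmetric-square-type contribution $\sum_p a_d(p^2)$ combined with $a_d(p)^2$, which yields the familiar $\pm\frac12\phi(0)$ for a symplectic family. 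Averaging over $d\in S(X)$, the conductor term gives $c\,\hat\phi(0)$. Optimizing $\phi$ with support $\sigma$ (Fejér kernel, $\hat\phi(0)=1/\sigma$ after normalization) should give $\frac{c}{\sigma}$, and we expect $c=6$: the conductor of the genus $2$ Jacobian grows like $|d|^{6}$-ish on average relative to $X$. This gives the main term $\frac12+(6+o(1))/\sigma$.

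\textbf{Prime sums.} The main obstacle is bounding the averaged prime terms
\begin{align*}
\frac{1}{|S(X)|}\sum_{d\in S(X)}\sum_{p\le X^\sigma}\frac{a_d(p)\log p}{\sqrt p}\hat\phi\Big(\frac{\log p}{\log X}\Big).
\end{align*}
We will write $a_d(p)=\sum_{\mathfrak p\mid p}\psi_0(\mathfrak p)\chi_d(\mathfrak p)+\text{conj}$ and interchange the sums. For fixed $\mathfrak p$, the sum $\sum_{d} \chi_d(\mathfrak p)$ over squarefree $d$ is a character sum in $d$. By cubic/sextic reciprocity, $\chi_d(\mathfrak p)$ is a nontrivial character in $d$ modulo a power of $p$ (times a bounded modulus), so Pólya–Vinogradov with squarefree sieving gives $O(X^{1/2}p^{1/2+\epsilon})$. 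Summing over $p\le X^\sigma$ against $p^{-1/2}$ yields $O(X^{1/2+\sigma})$ per unit, hence a normalized error of order $X^{\sigma-1/2}$. For the $m=2$ terms with $p\le X^{\sigma/2}$, the nontrivial-character part similarly contributes $X^{\sigma/2-1/2}/\log X$. The terms with $m\ge3$, together with the primes dividing $6d$, are absorbed into $O(1/\log X)$. To match the stated $X^{3\sigma/2-1/2}$, we will allow a less efficient bound, with the modulus of order $p^{2}$ or the ray class conductor contributing $p$. The delicate step is establishing the reciprocity statement: that $d\mapsto\chi_d(\mathfrak p)$ is a nonprincipal Dirichlet character modulo $O(p^{2})$, for all $\mathfrak p$ of degree one. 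We would first try to read this off from the explicit cocycle in \S\ref{subsec:CL}, identifying the defining field of $C_d$ with $\bQ(\sqrt{-3},\sqrt[3]{\cdot})$ in terms of $d$. Principal cases, when $\chi_d(\mathfrak p)$ is constant in $d$, occur only for the $m=2$ diagonal, and these produce exactly the $\frac12$.
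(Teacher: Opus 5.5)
Your skeleton (explicit formula, positive test function, conductor term, $p$- and $p^2$-sums, Fej\'er kernel) is the paper's. The gap is that the three inputs carrying all the content are asserted rather than proved, and the step you call delicate is false as stated.

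\textbf{The $d$-dependence of $a_p(C_d)$ and the bound on $S_1$.} For $S_1$ you must know how $a_p(C_d)$ depends on $d$. The paper gets this from the Fit\'e--Sutherland table, together with $K_d=\bQ(\alpha_d,\sqrt d,\sqrt{-3})$ and $L_d=K_d(\sqrt{6(d+3)})$. For good $p\equiv 2\pmod 3$ one has $a_p(C_d)=0$. For $p\equiv 1\pmod 3$ with $p\nmid 6d(d+3)$,
\[
a_p(C_d)=a_p(E_0)\,\quadsym{6(d+3)}{p}\,(N_d-1),
\]
where $N_d\in\{0,1,3\}$ is the number of roots of $g_d$ in $\bF_p$.

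Since $N_d=1$ whenever $\quadsym{d}{p}=-1$, this function vanishes on every nonresidue class but not on all residue classes. So the $d$-dependence is not through a Dirichlet character in $d$ of any modulus, and no cubic or sextic reciprocity will make it one. In a Hecke-character description the CM field would be $\bQ(\sqrt{-3},\sqrt d)$, which itself moves with $d$, so there is no fixed $\mathfrak p$ to sum over.

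What actually produces cancellation is that this is a $p$-periodic function of $d$ whose complete sum over $\bF_p\setminus\{0,-3\}$ is $O(1)$. This is an exact identity, obtained via $\kappa=(d-3)/(d+3)$, $d+3=6/(1-\kappa)$ and $1-(4t^3-3t)=(1-t)(2t+1)^2$. Feeding it into the squarefree sieve, with its trivial error $O(p\sqrt X)$ and $|a_p(E_0)|\le 2\sqrt p$, gives exactly the $X^{3\sigma/2-1/2}/\log X$ term.

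\textbf{The constant $\frac12$.} You insert it into the explicit formula, which has no such term while you also keep the full $m=2$ sum, and you justify it by a random-matrix heuristic. The label is also wrong: a symplectic family gives $-\frac12\phi(0)$, and the $+\frac12\phi(0)$ you need is the orthogonal signature. In the paper it is computed from the $p^2$-terms at $p\equiv 2\pmod 3$, which your degree-one analysis never touches. There $b_{p^2}(C_d)=-4$ if $f_K(p)=2$ and $b_{p^2}(C_d)=2$ if $f_K(p)=6$. Counting $d\bmod p$ by splitting type of $g_d$, these occur with densities $\frac23$ and $\frac13$. The average $-2$ is not a constant-in-$d$ value. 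With the prime number theorem in progressions it gives $S_2=-\frac12\phi(0)+o(1)$. For $p\equiv 1\pmod 3$ the terms are multiples of $a_p(E_0)^2/p-2$ and are killed by the prime number theorem for $\psi_0^2$. The trivial bound $|b_{p^2}|\le 4$ would give $2$ instead of $\frac12$.

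\textbf{The constant $6$.} You need $\ff(C_d)\ll d^2(d+3)^4$. That means conductor exponent exactly $2$ at $p\mid d$ and at most $4$ at $p\mid d+3$ for $p\ge 5$, which the paper proves with cluster pictures and the Dokchitser--Dokchitser--Maistret--Morgan conductor formula. Your proposal never identifies $d+3$ as a source of bad reduction. The claim $\log N_d=c\log|d|+O(1)$ is also false, since the exponent drops to $0$ when $6\mid\ord_p(d+3)$. Only an upper bound is needed, but tameness alone gives exponent $\le 4$ at every $p\mid d(d+3)$, hence $8/\sigma$ instead of $6/\sigma$.
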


Therefore, we have an upper bound $18 + \frac{1}{2}$, which is larger than the expectation.
The Katz--Sarnak philosophy predicts that the limiting one-level density should hold without the restriction of the support.
Following the philosophy, we conjecture the following.
\begin{conjecture}
    The average analytic rank of $\lcrc{C_d}$ for positive square-free integers $d$ is $\frac{1}{2}$.
\end{conjecture}

The main technique is estimating the $1$-level density using the explicit formula.
To apply it, we need to control the weighted sum of the trace of Frobenius $a_p(C_d)$ and $a_{p^2}(C_d)$.
Here, the work of Fit\'e--Sutherland \cite{FS} which express $a_p(C_d)$ and $a_{p^2}(C_d)$ in terms of $a_p(E_0)$ where $E_0 : y^2 = x^3 + 1$, is crucially used.
On the other hand, to apply the explicit formula, we need to determine the conductor exponents.
For this, we use recent advances in the local arithmetic of hyperelliptic curves due to Dokchitser--Dokchitser--Maistret--Morgan \cite{DDMM}, together with the user's guide \cite{BBB}.

We conclude the introduction by giving an outline of this article.
In section \ref{sec: pre}, we recall the results of Cardona--Lario \cite{CL} and Fit\'e--Sutherland \cite{FS} and give some remarks on the normalization.
In section \ref{sec: arith inv}, we compute some necessary arithmetic invariants. Using this information, we give a proof of the main theorem in section \ref{sec: proof of main}.
In this section, we also consider the other types, especially the type $D_{12}^B$.
But in this case, we find that the problem becomes less interesting since the twists are not simple, as the $V_4^C$-type twists studied in \cite{JKP24}.

\bigskip 
\textbf{Acknowledgment.} 
The authors thank Dohyeong Kim for the useful discussion.
Keunyoung Jeong was supported by the National Research Foundation of Korea (NRF) grant funded by the Korea government(MSIT) (RS-2024-00341372 and RS-2024-00415601).
Junyeong Park was supported by Basic Science Research Program through the National Research Foundation of Korea(NRF) funded by the Ministry of Education (RS-2024-00449679) and the National Research Foundation of Korea(NRF) grant funded by the Korea government(MSIT) (RS-2024-00415601).

\section{Preliminaries}  \label{sec: pre}

\subsection{Review of Cardona--Lario} \label{subsec:CL}

It is well known that the automorphism group of a genus $2$ curve over a field of characteristic zero is one of 
\begin{align*}
    C_2, \quad V_4, \quad D_8, \quad D_{12}, \quad C_{10}, \quad \widetilde{S_4}, \quad 2D_{12}
\end{align*}
where $C_n$ (resp. $D_n$) is the cyclic group (resp. dihedral group) of order $n$, $V_4$ is the Klein $4$ group, $\widetilde{S_4}\cong\GL_2(\bF_3)$ is a double cover of the symmetric group $S_4$, and $2D_{12}$ is a double cover of $D_{12}$.
We recall that $\overline{\bQ}$-isomorphism between genus $2$ curves can be represented as 
\begin{align*}
    (x', y') = \lbrb{\frac{mx + n}{px + q}, y \frac{mq-np}{(px + q)^3}}, \qquad \begin{pmatrix}
        m & n  \\ p & q
    \end{pmatrix} \in \GL_2(\overline{\bQ}).
\end{align*}
Especially, any automorphism group of a given genus $2$ curve is a $G_{\bQ}$-subgroup of $\GL_2(\overline{\bQ})$.

Cardona and his collaborators studied a parametrization of twists of a genus $2$ curve with a specified automorphism group.
Cardona--Quer \cite{CQ} gave a classification of twists of a genus $2$ curve whose automorphism group is isomorphic to $D_8$ or $D_{12}$, and Cardona \cite{Car} and Cardona--Lario \cite{CL} studied the case of $\widetilde{S_4}$, and $2D_{12}$, respectively.
In this paper, we mainly use the result of \cite{CL}.

Let $C/\bQ$ be a twist of $C_0 : y^2 = x^6+1$.
Then the automorphism group $\Aut_{\overline{\bQ}}(C)$ is isomorphic to $\mathrm{Aut}_{\overline{\bQ}}(C_0)$ as an abstract group, but the $G_{\bQ}$-group structures are not the same in general.
The first step of the classification of twists of $C_0$ is classifying the $G_{\bQ}$-group which is $2D_{12}$ as a group, and is embedded into $\GL_2(\overline{\bQ})$.
For a group $A$, the $G_{\bQ}$-action on $A$ can be described by
\begin{align*}
\xymatrix{G_{\bQ} \ar@{->>}[r] & \Gal(K/\bQ) \ar[r]^-{\cong} & H \subset \Aut(A)  }
\end{align*}
where $K$ is the field of definition of the action.
In the case of a twist of $C_0$, we know that 
\begin{align*}
    \Aut(\Aut_{\overline{\bQ}}(C_0)) = \Aut(2D_{12}) \cong  C_2 \times D_{12}
\end{align*}
as a group.
Therefore, $H$ is one of possible subgroups of $C_2 \times D_{12}$. 
We note that for each $\phi : G_{\bQ} \to \Aut(A)$ and $\rho \in \Aut(A)$,
the conjugate homomorphism
\[
\phi^\rho(\sigma) := \rho \circ \phi(\sigma) \circ \rho^{-1},  \qquad \sigma \in G_\bQ
\]
induces the same $G_{\bQ}$-action on $A$ (cf. \cite[p.197]{CL}).
Hence, we only need to consider conjugacy classes of subgroups.
For example, there are six conjugacy classes in $C_2 \times D_{12}$
that are isomorphic to $D_{12}$, which we denote by
$D_{12}^A, \dots, D_{12}^F$ following \cite{CL}.

For a given $H$, the isomorphism between $\Gal(K/\bQ)$ and $H$ can be described by specifying subfields of $K$.
For example, there are $6$ isomorphisms when $H \cong V_4$.
Let $\iota, \jmath$ be generators of $V_4^A$ .
Then, giving an isomorphism between $\mathrm{Gal}(K/\mathbb{Q})$ and $H$ is equivalent to giving the two quadratic fields fixed by $\langle\iota\rangle$ and $\langle\jmath\rangle$.
Similarly, for any $H \leq C_2 \times D_{12}$, the isomorphism between $\Gal(K/\bQ)$ and $H$ can be distinguished by specifying three (at most) quadratic fields.
A full description of these quadratic fields and the list of subgroups $H$ are given in \cite[Table 3, pp.200-201]{CL}.

Summarizing the discussion so far, a $G_{\bQ}$-group $A$ which is isomorphic to $2D_{12}$ as an abstract group can be characterized by a conjugacy class of subgroups $H\subseteq C_2 \times D_{12}$, $K$ the field definition of the action, and three (at most) quadratic fields
\begin{align} \label{eqn: def Ki}
    K_1 = \bQ(\sqrt{u}), \qquad 
    K_2 = \bQ(\sqrt{v}), \qquad 
    K_3 = \bQ(\sqrt{v'}).
\end{align}
In other words, $u, v, v' \in \bQ^\times$, but they could be square.

We now introduce additional notation to describe the field $K$ and the defining equation of $C$.
Let $\widetilde{K}$ be the field such that $K_1, K_2 \subset \widetilde{K} \subset K$ and $\Gal(\widetilde{K}/\bQ)$ is the one labeled as $D_{12}$-type in \cite[Table 3]{CL}.
The existence of $\widetilde{K}$ and its properties are described in \cite[p.202]{CL}, and we note that if $\Gal(K/\bQ) \cong D_{12}$, then $3 \mid [\widetilde{K} : \bQ]$.
The following is the restatement of \cite[Remark 1]{CL}.

\begin{definition} \label{def:uvsz}
(i) Suppose that $3 \nmid [\widetilde{K}:\bQ]$.
If $u,v \in \bQ$ satisfy $(u, -3v) = 1 \in \mathrm{Br}_2(\bQ)$, then
\begin{align*}
    x^2 + \frac{3}{v}y^2 = u.
\end{align*}
has solutions in $\mathbb{Q}^\times$. Once solutions $\alpha,\beta\in\mathbb{Q}^\times$ are chosen, we denote
\begin{align*}
    z := 4 \alpha^3 - 3u\alpha, \qquad 
    s := \frac{u^2 - 2\alpha^2u + \alpha z}{3\beta}.
\end{align*}
(ii) Suppose that $ 3 \mid [\widetilde{K} : \bQ]$. If $(u, -3v) = 1 \in \mathrm{Br}_2(\bQ)$, then
\begin{align*}
    u^3 - z^2 = 3s^2v
\end{align*}
for some $z \in \bQ$ and $s \in \bQ^\times$. 
Also, we define $\alpha$ as a zero of 
\begin{align*}
    x^3 - \frac{3u}{4}x - \frac{z}{4}
\end{align*}
and $\beta := \frac{u^2-2\alpha^2u + \alpha z}{3s}$.
\end{definition}

We will use case (ii) in this paper.
Recall that our first goal is to determine which $G_{\mathbb{Q}}$-groups can be embedded in $\mathrm{GL}_2(\overline{\mathbb{Q}})$.

\begin{theorem} \label{thm:CLthm2}
Let $A$ be a $G_{\bQ}$-group with underlying group isomorphic to $2D_{12}$. Let $K$ be the defining field of $G_{\bQ}$-group structure on $A$, and let $K_i$ be the subfield of $K$ defined by (\ref{eqn: def Ki}) for $i=1, 2, 3$.

Then $A$ can be embedded in $\GL_2(\Qbar)$ if and only if $(u, -3v) = 1 \in \mathrm{Br}_2(\bQ)$ and $v' \equiv -3v \pmod{\bQ^{\times 2}}$.
In this case, $A \cong \lara{U, V, -1}$ where
\begin{align*}
    U = \frac{1}{\sqrt{u}}
    \begin{pmatrix}
    \alpha & \beta \\ \frac{3\beta}{v} & -\alpha 
    \end{pmatrix}, \qquad
    V =  \frac{\sqrt{-3}}{2}
    \begin{pmatrix}
    1 & \frac{\sqrt{v}}{3} \\ -\frac{1}{\sqrt{v}} & 1
    \end{pmatrix}, \qquad 
    -1 = \begin{pmatrix}
        -1 & 0 \\ 0 & -1
    \end{pmatrix}
\end{align*}
and the field of definition $K$ is $\bQ(\alpha, \sqrt{u}, \sqrt{v}, \sqrt{-3})$.
\end{theorem}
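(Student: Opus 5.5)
The statement is a restatement of \cite[Theorem 2]{CL}, so the plan reproduces the structure of that argument in the present notation. There are two directions. Necessity: given a $G_\bQ$-equivariant embedding $A\hookrightarrow \GL_2(\Qbar)$, I extract the quadratic-field data and show the two conditions hold. Sufficiency: I write down the explicit matrices $U,V,-1$ and check that they generate a $G_\bQ$-stable copy of $2D_{12}$ inducing the prescribed action.

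For sufficiency I would proceed in three steps. First, check that $U,V,-1$ generate a group isomorphic to $2D_{12}$. Using $\alpha^2+\frac{3}{v}\beta^2=u$, which holds in case (ii) by the definitions of $\alpha,\beta$ and the identity $u^3-z^2=3s^2v$, one gets $U^2=I$ up to sign. A direct computation shows that $V$ has order $6$ up to sign, since its eigenvalues are $\frac{\sqrt{-3}}{2}(1\pm \frac{i}{\sqrt3})$, which are primitive $12$th roots of unity. One then verifies the dihedral relation $UVU^{-1}=\pm V^{-1}$. This last check is where the cubic relation for $\alpha$ should enter.

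Second, compute the Galois action entrywise. An element $\sigma\in G_\bQ$ acts on $\sqrt u,\sqrt v,\sqrt{-3}$ and on $\alpha$, which generates a cubic (possibly non-Galois) extension. I would show that $\sigma(V)\in\{V^{\pm1}\}$ up to sign, with the choice dictated by the signs of $\sigma$ on $\sqrt v$ and $\sqrt{-3}$, and hence on $\sqrt{v'}=\sqrt{-3v}$. Similarly $\sigma(U)$ equals $\pm U V^{j}$, where $j$ is determined by how $\sigma$ permutes the roots of $x^3-\frac{3u}{4}x-\frac z4$. The substitution $\alpha\mapsto$ another root is a $2\pi/3$ rotation in the Chebyshev parametrization $z=4\alpha^3-3u\alpha$, which corresponds to conjugating $U$ by a power of $V$. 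Together these show the group is $G_\bQ$-stable, that the induced homomorphism $G_\bQ\to \Aut(2D_{12})\cong C_2\times D_{12}$ matches the data $(H,K_1,K_2,K_3)$, and that its kernel cuts out $K=\bQ(\alpha,\sqrt u,\sqrt v,\sqrt{-3})$.

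For necessity, suppose $A\subset\GL_2(\Qbar)$ is $G_\bQ$-stable. The image in $\mathrm{PGL}_2$ is $D_{12}$, containing a cyclic subgroup of order $6$. After a $\GL_2(\bQ)$-conjugation I can normalize a generator of that cyclic subgroup to the shape of $V$; its eigenline field forces $\bQ(\sqrt v)$ and the factor $\sqrt{-3}$, which gives $v'\equiv -3v$. An involution anticommuting appropriately with $V$ then has the shape $\frac{1}{\sqrt u}\begin{pmatrix}a&b\\ 3b/v&-a\end{pmatrix}$ with $a^2+3b^2/v=u$. The rationality of the quaternion algebra spanned over $\bQ$ by these elements gives $(u,-3v)=1$ in $\mathrm{Br}_2(\bQ)$.

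The main obstacle is the second step: the bookkeeping that matches $\sigma(U)$ with the correct automorphism of $2D_{12}$ across the conjugacy classes of $H$. To keep this uniform I would handle it by verifying it on generators of $\Gal(K/\bQ)$ only.
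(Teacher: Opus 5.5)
Since the paper only cites \cite[Theorem 2]{CL}, your outline has to carry the argument itself. In case (ii) of Definition~\ref{def:uvsz}, the only case the paper uses, the sufficiency direction has a real gap: nothing ties $\alpha$ to the given field $K$. The cubic field $\mathbb{Q}(\alpha)$ depends on which point $(z,s)$ of the conic $z^2+3vs^2=u^3$ you take. For $(u,v)=(1,2)$, the points $(z,s)=(-\tfrac15,\tfrac25)$ (the paper's choice for $d=2$) and $(\tfrac57,\tfrac27)$ give $20x^3-15x+1$ and $28x^3-21x-5$. The reciprocal $x^3-15x^2+20$ of the first is $5$-Eisenstein. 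The second becomes $y^3-147y-490$ under $y=14x$, with discriminant $2^5 3^4 7^4$. So $5$ ramifies in one cubic field and not in the other, although $u,v,v'$ agree. Hence $\langle U,V,-1\rangle$ is a $G_{\mathbb{Q}}$-stable copy of $2D_{12}$ with the right quadratic data, but in general it is not $G_{\mathbb{Q}}$-isomorphic to $A$. Checking $(H,K_1,K_2,K_3)$ on generators cannot detect this. What is missing is a way to produce $\alpha$ inside the prescribed cubic subfield $F\subset K$, whose quadratic resolvent is $K_2=\mathbb{Q}(\sqrt v)$. On trace-zero elements of $F$, the form $x\mapsto\tfrac23\mathrm{Tr}_{F/\mathbb{Q}}(x^2)$ is isometric to $\langle 1,3v\rangle$ (e.g.\ from $\mathrm{Tr}_{F/\mathbb{Q}}(x^2)\cong\langle 1,2,2\,\mathrm{disc}\,F\rangle$ and Witt cancellation), so it represents $u$ exactly when $(u,-3v)=1$. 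A trace-zero $\alpha$ with $\tfrac23\mathrm{Tr}(\alpha^2)=u$ has minimal polynomial $x^3-\tfrac{3u}{4}x-\tfrac{z}{4}$ with $z\in\mathbb{Q}$, and $\tfrac{27}{16}(u^3-z^2)=\mathrm{disc}\in v\,\mathbb{Q}^{\times 2}$ then supplies $s$. This is where the Brauer condition is really used in case (ii). Necessity has a related omission. In case (ii) the normalized involution $\sqrt u\,U$ has entries in $F$, not in $\mathbb{Q}$, so your quaternion-algebra argument only shows that $(u,-3v)$ splits over a cubic field. You still need injectivity of $\mathrm{Br}_2(\mathbb{Q})\to\mathrm{Br}_2(F)$ in odd degree, or Springer's theorem, to conclude.

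There are also two slips in your sufficiency computation. First, with $\sqrt{-3}=i\sqrt3$, the eigenvalues $\frac{\sqrt{-3}}{2}(1\pm\frac{i}{\sqrt3})$ are $\frac{-1+\sqrt{-3}}{2}=\zeta_3$ and $\frac{1+\sqrt{-3}}{2}=\zeta_6$, not primitive $12$th roots of unity. So $V^6=I$ and $V^3\neq\pm I$. This matters: $2D_{12}$ has no element of order $12$, so with your eigenvalues the group could not be $2D_{12}$. Second, the dihedral relation does not involve the cubic. Write $V=\frac{\sqrt{-3}}{2}(I+N)$ with $N=\begin{pmatrix}0&\sqrt v/3\\-1/\sqrt v&0\end{pmatrix}$, so $N^2=-\frac13 I$. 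Every matrix $\begin{pmatrix}a&b\\3b/v&-a\end{pmatrix}$ anticommutes with $N$, which gives $UVU^{-1}=-V^{-1}$ for all $\alpha,\beta$; separately, $\alpha^2+3\beta^2/v=u$ gives $U^2=I$ exactly. The cubic, together with $\beta=(4\alpha^2-u)(\alpha^2-u)/3s$, is needed only in your Galois-stability step. There, the $(1,1)$-entries of $\sqrt u\,UV^{\pm2}$ are $-\frac{\alpha}{2}\pm\frac{3\beta}{2\sqrt v}$, exactly the other two roots of $x^3-\frac{3u}{4}x-\frac{z}{4}$. Finally, in the necessity sketch the eigenlines of $V$ are defined over $\mathbb{Q}(\sqrt{-3v})$, not $\mathbb{Q}(\sqrt v)$. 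The field $\mathbb{Q}(\sqrt v)$ is the one over which $V$ is fixed up to sign, and $v'\equiv-3v$ comes from $G_{\mathbb{Q}}$ acting on the eigenvalues $\zeta_3,\zeta_6$ through $\mathbb{Q}(\sqrt{-3})$.
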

\begin{proof}
This is \cite[Theorem 2]{CL}.
\end{proof}

For such an $A$, we can find a defining equation of a genus $2$ curve whose automorphism group is $G_{\bQ}$-isomorphic to $A$.

\begin{theorem} \label{thm:CLProp34}
For a $G_{\bQ}$-group $A$ embedded in $\GL_2(\Qbar)$ and isomorphic to $2D_{12}$ as an abstract group, there is a twist defined by the equation
\begin{align*}
    y^2 = 27zx^6 - 162svx^5 - 135vzx^4 + 180sv^2x^3 + 45v^2zx^2 - 18sv^3x - v^3z
\end{align*}
whose automorphism group is $G_{\bQ}$-isomorphic to $A$.
Furthermore, two twists whose automorphism groups are $G_{\bQ}$-isomorphic to $A$ differ by a hyperelliptic twist.
\end{theorem}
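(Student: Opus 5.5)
The plan is to realize $A$ as the conjugate of $\Aut_{\Qbar}(C_0)$ by an explicit Möbius transformation, and then read off both the equation and the uniqueness statement from the twisting cocycle. This follows Cardona--Lario.

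\textbf{Construction.} Write $\Aut_{\Qbar}(C_0)=\lara{U_0,V_0,-1}\subset\GL_2(\Qbar)$. Here $U_0$ and $V_0$ are the standard generators, coming from $x\mapsto 1/x$ and $x\mapsto\zeta_6x$ together with the matching $y$-actions. Using Theorem \ref{thm:CLthm2}, write $A=\lara{U,V,-1}$ with $U,V$ the explicit matrices there. The first step is to find $M\in\GL_2(\Qbar)$ with
\begin{align*}
M U_0 M^{-1}=U, \qquad M V_0 M^{-1}=V,
\end{align*}
possibly up to the sign $-1$. To do this, match the eigenvectors of the order-$6$ elements $V_0$ and $V$: the fixed points of $V$ acting on $\mathbb{P}^1$ are $\pm\sqrt{v}/\sqrt{-3}$ up to normalization. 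Then fix the remaining diagonal freedom using $U$. Pulling back $y^2=x^6+1$ by $M$ gives a curve $C\colon y^2=F(x)$ over $\Qbar$ whose automorphism group inside $\GL_2(\Qbar)$ is exactly $A$.

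\textbf{Descent and the explicit sextic.} The second step is to show that $F$ can be scaled into $\bQ[x]$. Since $A$ is $G_\bQ$-stable, for every $\sigma\in G_\bQ$ the matrix $M\,{}^\sigma\!M^{-1}$ normalizes $A$ and induces the same automorphism of $A$ as the given Galois action. The point is that the six roots of $F$ are the orbit of one root under the image of $A$ in $\mathrm{PGL}_2$, so this root set is Galois-stable; concretely, they are the images under $M$ of the sixth roots of $-1$. Hence $F$ is, up to a scalar, a polynomial over $\bQ$. I would then expand $\prod_i(x-M(\zeta_i))$ symbolically in $\alpha,\beta,u,v$ and simplify using the relations of Definition \ref{def:uvsz}(ii):
\begin{align*}
u^3-z^2=3s^2v, \qquad 4\alpha^3-3u\alpha-z=0, \qquad \beta=\frac{u^2-2\alpha^2u+\alpha z}{3s}.
\end{align*}
After clearing a scalar, this should produce the stated sextic $27zx^6-162svx^5-\cdots-v^3z$. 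I expect this symbolic simplification to be the main obstacle. The key is to choose $M$ cleverly, for instance with columns built from $\alpha\pm\sqrt{u}$ and $\sqrt{v}$, so that the cubic relation for $\alpha$ removes all of its powers and only $z$ and $s$ survive.

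\textbf{Uniqueness up to hyperelliptic twist.} Suppose $C,C'$ are two such twists and $\phi\colon C'\to C$ is a $\Qbar$-isomorphism, represented by $N\in\GL_2(\Qbar)$. After composing with an element of $\Aut(C)$, we may assume that conjugation by $N$ realizes the given $G_\bQ$-isomorphism $\Aut(C')\cong A\cong\Aut(C)$. The cocycle $\xi_\sigma=\phi\circ{}^\sigma\phi^{-1}\in\Aut(C)$ then commutes with every element of $\Aut(C)$, because both groups carry the same Galois action. So $\xi_\sigma$ lies in the center of $2D_{12}$, which is $\{\pm1\}$, the hyperelliptic involution. The class of $\xi$ thus lies in $H^1(G_\bQ,\{\pm1\})\cong\bQ^\times/(\bQ^\times)^2$, and therefore $C'$ is a hyperelliptic twist $ky^2=F(x)$ of $C$.
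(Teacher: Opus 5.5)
The paper does not prove this statement itself; it only cites \cite[Propositions 3, 4]{CL}. Your proposal therefore has to stand on its own, and the existence half has a genuine gap.

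\textbf{Existence half.} The content here is that \emph{this particular} sextic has automorphism group exactly $A$, and you never establish it. The matrix $M$ is only sketched, and the expansion of $\prod_i(x-M(\zeta_i))$ and its reduction to the stated coefficients are deferred (``this should produce'').

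The descent step is also argued incorrectly. The image $\bar A\cong D_{12}$ of $A$ in $\mathrm{PGL}_2(\Qbar)$ has \emph{two} orbits of size six on $\mathbb{P}^1$: the transports of the roots of $x^6+1$ and of $x^6-1$. Galois stability of $\bar A$ only says that $\sigma$ permutes orbits of equal size. So ``the roots form one $\bar A$-orbit'' does not give Galois stability. Already for $C_0$, the normalizer in $\mathrm{PGL}_2$ of the image $\bar A_0$ of $\Aut_{\Qbar}(C_0)$ contains $x\mapsto\zeta_{12}x$, which swaps the two orbits.

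The missing input is a $\GL_2$-level distinction. An involution of $\mathbb{P}^1$ fixing a branch point lifts to elements of order $4$ in $A$. An involution fixing points of the other orbit lifts to elements of order $2$, like $U$, which has eigenvalues $\pm1$. So the root set is the fixed-point set of the order-$4$ elements of $A$, and $\sigma$ preserves that set.

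The efficient route avoids $M$ entirely. The displayed $f$ visibly lies in $\bQ[x]$. Check directly that each $N=\begin{pmatrix} m&n\\ p&q\end{pmatrix}\in\{U,V\}$ satisfies
\[
(px+q)^6\, f\!\left(\frac{mx+n}{px+q}\right)=(\det N)^2 f(x),
\]
using $u^3-z^2=3s^2v$, $4\alpha^3-3u\alpha=z$ and the formula for $\beta$, and check that $\disc f\neq 0$. This gives $A\subseteq\Aut_{\Qbar}(C)$, compatibly with the entrywise Galois action. Equality follows because the only larger genus-$2$ automorphism group, $\widetilde{S_4}\cong\GL_2(\bF_3)$, contains no subgroup isomorphic to $\langle V,-1\rangle\cong C_6\times C_2$.

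\textbf{Uniqueness half.} The central-cocycle argument is the right one. However, the step ``after composing with an element of $\Aut(C)$, conjugation by $N$ realizes the given $G_\bQ$-isomorphism $\psi$'' is not justified. Composing $\phi$ with automorphisms changes $c_\phi\colon a\mapsto\phi a\phi^{-1}$ only by inner automorphisms. But $|\Aut(2D_{12})|=24$ while $|\mathrm{Inn}(2D_{12})|=12$, so for half of the possible $\psi$ no curve isomorphism realizes $\psi$.

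The fix uses $\Aut(2D_{12})\cong C_2\times D_{12}$. Its center has order $4$, while $Z(D_{12})$ has order $2$, so there is a central outer automorphism $\gamma$; explicitly $U\mapsto U$, $V\mapsto -V$, $-1\mapsto -1$. Being central, $\gamma$ commutes with the Galois action, so $\psi\circ\gamma$ is again a $G_\bQ$-isomorphism. Replacing $\psi$ by $\psi\circ\gamma$ if necessary makes $c_\phi^{-1}\psi$ inner. Your argument that $\xi_\sigma\in Z(2D_{12})=\{\pm1\}$ then goes through.

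If instead the hypothesis is read as $\Aut(C)=\Aut(C')=A$ inside $\GL_2(\Qbar)$, no cohomology is needed. Both sextics have as roots the fixed points of the order-$4$ elements of $A$, so $f'=\lambda f$ with $\lambda\in\bQ^\times$.
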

\begin{proof}
This is \cite[Propositions 3, 4]{CL}.
We note that $z$ and $s$ are given by Definition \ref{def:uvsz}.
\end{proof}

For a twist $C/\bQ$, the $G_{\bQ}$-group structure on $\Aut_{\overline{\bQ}}(C)$ is determined by a conjugacy class of subgroup of $C_2 \times D_{12}$, and $u, v, v' = -3v \in \bQ^\times/(\bQ^\times)^2$.
A twist $C$ of $C_0$ is said to be of \emph{type $T$} if the conjugacy class of $C_2 \times D_{12}$ is $T$.
The set of twists of type $T$ is denoted by $\Twist_T(C_0/\bQ)$ in the introduction.

Using the above two theorems, we can classify the twists of $C_0/\bQ$.

\begin{theorem} \label{thm: CL classification}
The set $\Twist(C_0/\bQ)$ of $\mathbb{Q}$-isomorphism classes of twists of $C_0$ is in bijection with the set of tuples $(A,k)$ such that
\begin{enumerate}
    \item $A$ is a $G_\mathbb{Q}$-group whose underlying group is isomorphic to $2D_{12}$, and whose Galois action is determined by the field of definition $K$ and its quadratic subfields $K_1=\mathbb{Q}(\sqrt{u})$, $K_2=\mathbb{Q}(\sqrt{v})$, and $K_3=\mathbb{Q}(\sqrt{v'})$.
    \item $k\in(\mathbb{Q}^\times/(\mathbb{Q}^\times)^2))/S$ where $S=\langle u,v'\rangle$ if $3\nmid[K:\mathbb{Q}]$ and $S=\langle v'\rangle$ if $3\mid[K:\mathbb{Q}]$.
\end{enumerate}
For each pair $(A, k)$, the defining equation of the corresponding twist $C_0$ is $ky^2 = f(x)$ where $f(x)$ is given by Theorem \ref{thm:CLProp34}.
\end{theorem}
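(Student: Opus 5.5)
The plan is to deduce the bijection from the twist--cohomology correspondence \eqref{eqn: twist cohom} by fibering $\Twist(C_0/\bQ)$ over the $G_\bQ$-isomorphism class of $\Aut_{\Qbar}(C)$. Given a twist $C$, the group $A:=\Aut_{\Qbar}(C)$ is abstractly $2D_{12}$. It sits inside $\GL_2(\Qbar)$ compatibly with the Galois action via the action on $(x,y)$ recalled at the start of Section~\ref{subsec:CL}. By Theorem~\ref{thm:CLthm2}, its $G_\bQ$-structure is encoded by a conjugacy class $H$ and quadratic fields $K_1,K_2,K_3$ satisfying $(u,-3v)=1$ and $v'\equiv -3v$. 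This defines the map $C\mapsto A$; conjugate homomorphisms $\phi^\rho$ give the same $G_\bQ$-group, so $A$ is well defined up to isomorphism.

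Conversely, for each such $A$, Theorem~\ref{thm:CLProp34} produces a twist $y^2=f(x)$ with automorphism group $A$, so the map is surjective. The same theorem also says that the fiber over $A$ consists exactly of the hyperelliptic twists $ky^2=f(x)$. It therefore remains to identify when two values $k,k'$ give $\bQ$-isomorphic curves. Fixing $C_A:y^2=f(x)$, the twists of $C_A$ are classified by $H^1(G_\bQ,A)$. The hyperelliptic twists are the image of $H^1(G_\bQ,\{\pm1\})=\bQ^\times/(\bQ^\times)^2$, since $\{\pm1\}$ is the central hyperelliptic subgroup. Two classes $k,k'$ have the same image precisely when they differ by an element of the image of the connecting map from $H^0(G_\bQ,A/\{\pm1\})$. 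Concretely, $C_A^{(k)}\cong C_A^{(k')}$ over $\bQ$ if and only if some $\gamma\in A$ satisfies $\gamma^\sigma\gamma^{-1}=\chi_{k/k'}(\sigma)$ for all $\sigma\in G_\bQ$. Equivalently, $\gamma$ is fixed by $G_\bQ$ modulo $\pm1$, and $k/k'$ is the quadratic character it determines.

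The main obstacle is computing this group $S$ from the explicit generators $U,V,-1$ of Theorem~\ref{thm:CLthm2}. I would first find the elements of $A/\{\pm1\}\cong D_{12}$ fixed by $G_\bQ$, using the action of $\Gal(K/\bQ)$ determined by $\sqrt u,\sqrt v,\sqrt{-3}$ and $\alpha$. Then for each such element I would read off the quadratic character $\sigma\mapsto\gamma^\sigma\gamma^{-1}\in\{\pm1\}$.
\begin{itemize}
\item The element $V^3$, or a suitable power of $V$ fixed modulo sign, yields the character of $\bQ(\sqrt{-3v})=\bQ(\sqrt{v'})$, because $V$ involves $\sqrt{-3}$ and $\sqrt v$.
\item The element $U$ is stable modulo sign exactly when $\alpha\in\bQ$, that is, when $3\nmid[K:\bQ]$. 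In that case it yields the character of $\bQ(\sqrt u)$. When $\alpha$ is a root of an irreducible cubic, no $G_\bQ$-fixed class involving $U$ exists.
\end{itemize}
Checking that these generate all fixed classes modulo sign gives $S=\langle u,v'\rangle$ or $S=\langle v'\rangle$, as stated. This completes the bijection, with the equation $ky^2=f(x)$ coming from Theorem~\ref{thm:CLProp34}. Alternatively, I would cite \cite[\S 6--7]{CL}, where this is carried out.
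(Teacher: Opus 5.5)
Your proposal is correct. Its skeleton matches how the paper obtains the statement, which the paper records without a separate proof. The paper sorts twists by the $G_\bQ$-isomorphism class of $\Aut_{\Qbar}(C)$ and uses Theorem~\ref{thm:CLthm2} to identify the admissible $A$. It uses Theorem~\ref{thm:CLProp34} for existence and for the fact that each fibre is one hyperelliptic-twist family. The quotient $S$ is then taken from Cardona--Lario without derivation.

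What you add is a derivation of $S$ as the image of the connecting map $\delta\colon (A/\{\pm1\})^{G_\bQ}\to H^1(G_\bQ,\{\pm1\})$, and this computation goes through:
\begin{enumerate}
\item If $\sigma$ negates only $\sqrt{-3}$, then $\sigma(V)=-V$.
\item If $\sigma$ negates only $\sqrt v$, then $\sigma(V)=-V^{-1}$, because the matrix part of $V$ has determinant $4/3$.
\item Since $V^6=1$, the class $\bar V^3$ is $G_\bQ$-fixed and $\delta(\bar V^3)$ is the quadratic character of $\bQ(\sqrt{-3v})=\bQ(\sqrt{v'})$.
\item In case (i), $\det U=-1$ and $\alpha,\beta\in\bQ$, so $\delta(\bar U)$ is the character of $\bQ(\sqrt u)$.
\item In case (ii), an element of order $3$ in $\Gal(K/\bQ)$ fixes $\sqrt{-3}$ and $\sqrt v$, hence fixes $\bar V$. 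It must act nontrivially on $A/\{\pm1\}$, so it sends $\bar U$ to $\bar U\bar V^{\pm2}$ and no reflection is fixed.
\end{enumerate}

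You should state two small points explicitly. First, $\delta$ is a homomorphism because $\{\pm1\}$ is central; it is in fact the full centre of $2D_{12}$. So its image is the subgroup generated by the images of generators of the fixed subgroup. Second, when $v$ is a square every rotation is fixed, but then $\delta(\bar V)$ is the character of $\bQ(\sqrt{-3})=\bQ(\sqrt{v'})$, so $S$ is unchanged.

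What your route buys over the paper's citation is a self-contained explanation of why $u$ drops out of $S$ exactly when $3\mid[K:\bQ]$.
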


Recall that there are six conjugacy classes of subgroups in $C_2\times D_{12}$ isomorphic to $D_{12}$, denoted by $D_{12}^A,\cdots,D_{12}^F$ following \cite{CL}. In particular, by \cite[\S7]{CL}, the Brauer group condition is trivial for $D_{12}^A$ and $D_{12}^B$. In this paper, we will focus on these two types.

\subsection{Review of Fit\'e--Sutherland}
Let $C$ over $\bQ$ be a genus $2$ curve with good reduction at $p$.
It is known that the zeta function
\begin{align*}
    \zeta(C/\bF_p, s) := \exp \lbrb{\sum_{m=1}^{\infty} \frac{|C(\bF_{p^m})|}{m}p^{-ms}}
\end{align*}
is a rational function. More precisely,
\begin{align*}
    \zeta(C/\bF_p, s) = \frac{P_1(p^{-s})}{(1-p^{-s})(1 - p^{1-s})}
\end{align*}
where
\begin{align*}
    P_1(t) =  \prod_{i=1}^4(1 - \alpha_{p, i}t) \in \bZ[t]
\end{align*}
for some algebraic integers $\alpha_{p, i}$.
By the Weil conjecture, we have $|\alpha_{p,i}|=\sqrt{p}$.

Let $a_{p, i}(C)$ be the $i$-th coefficient of $P_1(t)$, i.e.,
\begin{align*}
    P_1(t) =  1 +  a_{p, 1}(C)t + a_{p, 2}(C)t^2 + a_{p, 3}(C)t^3+ p^2 t^4.   
\end{align*}
Then we have
\begin{align*}
    a_{p, 1}(C) = -\sum_{i=1}^4 \alpha_{p, i}, \qquad 
    a_{p, 2}(C) = \sum_{i>j} \alpha_{p, i} \alpha_{p, j}.
\end{align*}
The functional equation of $\zeta(C/\bF_p, s)$ shows that $a_{p, 3}(C) = pa_{p, 1}(C)$.


The $L$-factor $L_p(C/\mathbb{Q},s)$ of $C$ at a good prime $p$ is defined to be $P_1(p^{-s})^{-1}$. Then the $L$-function of $C$ is defined to be the product of $L$-factors. Define $a_n(C)$ by the following relation:
\begin{align*}
L(C/\bQ, s) = \prod_p L_p(C/\bQ, s) = \sum_{n=1}^{\infty} \frac{a_n(C)}{n^s}.
\end{align*}
Later, we will also use
\begin{align*}
    \lambda_{p^k}(C) := \frac{a_{p^k}(C)}{p^{\frac{k}{2}}}, \qquad 
    b_{p^k}(C) := \sum_{i=1}^4 \frac{\alpha_{p, i}^k}{p^{\frac{k}{2}}}
\end{align*}
to follow the convention of Iwaniec--Kowalski \cite{IK} in section \ref{sec: proof of main}.
We note that $b_p(C) = -a_{p, 1}(C)/\sqrt{p}$ and $b_{p^2}(C) = (a_{p, 1}(C)^2 - 2a_{p, 2}(C))/p$.

For a prime $p$ where $C$ has good reduction, 
\begin{align*}
    L_p(C/\bQ, s) 
    &=  \prod_{i=1}^4(1 - \alpha_{p, i}p^{-s})^{-1} = 1 + \lbrb{\sum_{i=1}^4 \alpha_{p, i}} p^{-s} + \lbrb{  \sum_{i=1}^4 \alpha_{p, i}^2  +  \sum_{i > j}\alpha_{p, i} \alpha_{p, j}  }p^{-2s} + \cdots.
\end{align*}
Hence 
\begin{align} 
    a_p(C) &= \sum_{i=1}^4 \alpha_{p, i} = -a_{p, 1}(C), \nonumber  \\
    a_{p^2}(C) &= \sum_{i=1}^4 \alpha_{p, i}^2  +  \sum_{i > j}\alpha_{p, i} \alpha_{p, j}
    = \lbrb{\sum_{i=1}^4 \alpha_{p, i}}^2 - \sum_{i > j} \alpha_{p, i} \alpha_{p, j} = a_{p, 1}(C)^2 - a_{p, 2}(C). \label{eqn: ap2 ap1 ap2}
\end{align}

We note that Fit\'e--Sutherland \cite{FS} use a different normalization of Fourier coefficients.
They define
\begin{align*}
L_p(A, t) := \prod_{i=1}^{2g} (1 - \alpha_it),
\quad \text{where} \quad
|A(\bF_{p^k})| = \prod_{i=1}^{2g}(1- \alpha_i^k)
\end{align*}
for $n \geq 0$ and $g = \dim A$, and define
\begin{align*}
        \bar{L}_p(A, t) := L_p(A, t/\sqrt{p}) \qquad 
        \sum_{i=0}^{2g} a_i(A)(p)t^i = \bar{L}_p(A, t).
\end{align*}
In other words, if $J$ is the Jacobian of $C$, then $a_i(J)(p)$ in \cite{FS} corresponds to our $a_{p,i}(J)/p^{\frac{i}{2}}$, since the $L$-function of a curve and its Jacobian coincide.

When $A = E$ is an elliptic curve, $a_{1}(E)(p)$ in \cite{FS} corresponds to $-a_p(E)/\sqrt{p}$, where $a_p(E)$ is the usual trace of Frobenius. 
Therefore, the statements
\begin{align*}
    a_1(\Jac(C))(p) = a_1(E)(p), \qquad a_2(\Jac(C))(p) = a_1(E)(p)^2 + 2
\end{align*}
in \cite{FS} can be translated into our notation as follows: 
\begin{align*}
    a_{p, 1}(C) = -a_p(E), \qquad 
    a_{p, 2}(C) = a_p(E)^2 + 2p.
\end{align*}

Fit\'e--Sutherland \cite[Proposition 4.9]{FS} showed that when $J$ is the Jacobian of twists of the curve $C_0$, its $L$-factor at good prime $p >3$ is determined by
\begin{align*}
    I(p) := (f_L(p), f_K(p), f_M(p)) 
    \quad \textrm{and} \quad
    a_p := a_p(E_0)
\end{align*}
where $E_0 : y^2 = x^3 + 1$ is an elliptic curve.
Here $L$ is the number field where the isomorphisms between $C$ and $C_0$ are defined, $K$ is the number field where the automorphisms of $C$ are defined which coincides with the notation $K$ of the previous section \ref{subsec:CL}, $M = \bQ(\sqrt{-3})$, and $f_{\bullet}(p)$ is the residue degree of $p$ in the extension field $\bullet$ of $\bQ$. We remark that \cite{FS} also gave similar results for the twists of $y^2 = x^5 - x$.

\begin{proposition} \label{prop: list Lp}
Let $J$ be the Jacobian of a twist $C$ of $C_0$ over $\bQ$, and let $p > 3$ be a prime of good reduction for $C$.
Then,
\begin{align*}
	\begin{array}{|c|c|c|c|}
	\hline
	I(p) & a_{p,1}(J) & a_{p,2}(J) 
	\\ \hline
	(1,1,1) & -2a_p & a_p^2 + 2p 
	\\
	(2,1,1) & 2a_p & a_p^2 + 2p 
	\\
	(2,2,1) & 0 & -a_p^2 + 2p 
	\\
	(3,3,1) & a_p & a_p^2 - p 
	\\
	(4,2,1) & 0 & a_p^2 - 2p 
	\\
	(6,3,1) & -a_p & a_p^2 - p 
	\\
	(6,6,1) & \pm \sqrt{3(4p - a_p^2)} & -a_p^2 + 5p 
	\\
	(2,2,2) & 0 & 2p 
	\\
	(4,2,2) & 0 & -2p 
	\\
	(6,6,2) & 0 & -p 
	\\
	(12,6,2) & 0 & p 
	\\ \hline
	\end{array}
\end{align*}

\end{proposition}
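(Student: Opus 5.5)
This is essentially a restatement of \cite[Proposition 4.9]{FS}, so the plan is to import their table and convert it to the present normalization. Fit\'e--Sutherland express $\bar L_p(J,t)$ in terms of $a_1(E_0)(p)$ according to $I(p)$. The translation is the dictionary recorded just before the statement: $a_1(J)(p) = a_{p,1}(J)/\sqrt p$, $a_2(J)(p) = a_{p,2}(J)/p$, and $a_1(E_0)(p) = -a_p/\sqrt p$. The computation is row by row. For instance, the row $(1,1,1)$ reads $a_1(J)(p) = 2a_1(E_0)(p)$ and $a_2(J)(p) = a_1(E_0)(p)^2+2$, which becomes $a_{p,1}(J)=-2a_p$ and $a_{p,2}(J)=a_p^2+2p$. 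The other rows are converted the same way.

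To make the conversion checkable independently of \cite{FS}, I would rederive each row from the structure of $J$. Over $L$, the curve $C$ becomes isomorphic to $C_0$, and $\Jac(C_0)$ is $\bQ$-isogenous to $E_0 \times E_0'$, where $E_0' : y^2=x^3-1$ is the quadratic twist of $E_0$ by $-1$ (or by a twist $\cong E_0$ over $\bQ(\sqrt{-3})$ or $\bQ(i)$ as appropriate). So $J_L \sim E_{0,L}^2$ up to twisting. The Frobenius $\mathrm{Frob}_p$ acts on $V_\ell(J)$ as $\tau\cdot(\mathrm{Frob}_p \text{ on } V_\ell(E_0)^2)$, where $\tau$ is the twisting cocycle's image in $\Aut(\Jac C_0)\subset \GL_2(\mathrm{End}(E_0))$. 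The order of $\tau$ is governed by $f_L(p)$, and its compatibility with CM by $\mathcal O_M$ is governed by $f_M(p)$.

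The computation then splits into two regimes.

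\begin{itemize}
\item \textbf{Ordinary case} ($f_M(p)=1$). Frobenius on $V_\ell(E_0)$ is $\mathrm{diag}(\pi,\bar\pi)$ with $\pi+\bar\pi=a_p$ and $\pi\bar\pi=p$. The element $\tau$ commutes with $\mathcal O_M$, so the eigenvalues of Frobenius on $J$ are $\zeta\pi, \zeta'\pi$ and their conjugates, with $\zeta,\zeta'$ roots of unity determined by the class of $\tau$ (orders $1,2,3,4,6$ matching $f_L(p)$, with $f_K(p)$ distinguishing central from non-central $\tau$). Then
\begin{align*}
a_{p,1}(J) &= -\sum \alpha_{p,i},\\
a_{p,2}(J) &= \sum_{i>j}\alpha_{p,i}\alpha_{p,j}.
\end{align*}
These are elementary symmetric computations. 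For example, $\zeta=-\zeta'=i$ gives eigenvalues $\pm i\pi$ and $\pm i\bar\pi$, so $a_{p,1}=0$ and $a_{p,2}=-(\pi^2+\bar\pi^2)+2p=-a_p^2+4p-2p$. I would need to match this carefully against the table entry $a_p^2-2p$; sign conventions matter here.

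\item \textbf{Supersingular case} ($f_M(p)=2$). Here $a_p=0$, Frobenius anticommutes with the CM, and the characteristic polynomial depends only on the order of $\tau\cdot\mathrm{Frob}$. This yields $1+2pt^2+p^2t^4$, $1-2pt^2+p^2t^4$, $1-pt^2+p^2t^4$, and $1+pt^2+p^2t^4$ respectively.
\end{itemize}

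The main obstacle is the row $(6,6,1)$. There $\tau$ has order $6$ but does not commute with the CM action in a diagonal way, so the eigenvalues are not simply root-of-unity multiples of $\pi$. The trace $\pm\sqrt{3(4p-a_p^2)}$ must come from writing $4p-a_p^2=3b^2$, using $p=\pi\bar\pi$ in $\bZ[\zeta_3]$, and evaluating $\zeta_3\pi+\overline{\zeta_3\pi}$-type sums. The sign ambiguity reflects the choice of $\zeta_3$ versus $\zeta_3^{-1}$. I would do this row by explicit computation in $\bZ[\zeta_3]$, and otherwise simply cite \cite[Proposition 4.9]{FS} for the full case list.
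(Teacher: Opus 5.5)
Your main line is exactly the paper's proof, which consists only of the citation of \cite[Proposition 4.9]{FS}: you quote that result and convert it via $a_i(J)(p)=a_{p,i}(J)/p^{i/2}$ and $a_1(E_0)(p)=-a_p/\sqrt{p}$, and your check of row $(1,1,1)$ is correct. Your optional cross-check has slips that do not affect the cited argument: $\Jac(C_0)$ is $\bQ$-isogenous to $E_0^2$ itself (via $(x,y)\mapsto(x^2,y)$ and $(x,y)\mapsto(x^{-2},y/x^{3})$), not to $E_0$ times its $-1$-twist; the eigenvalues $\pm i\pi,\pm i\bar\pi$ give $P_1(t)=(1+\pi^2t^2)(1+\bar\pi^2t^2)$, so $a_{p,2}=\pi^2+\bar\pi^2=a_p^2-2p$, which is the tabulated value with no sign issue; and row $(6,6,1)$ is also a root-of-unity case, since $x\mapsto\zeta_6x$ acts on $dx/y,\ x\,dx/y$ by $\zeta_6,\zeta_6^2$, and the eigenvalues $\zeta_6\pi,\zeta_6^2\pi$ and their conjugates give $a_{p,1}=-\sqrt{-3}\,(\pi-\bar\pi)=\pm\sqrt{3(4p-a_p^2)}$ and $a_{p,2}=-a_p^2+5p$.
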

\begin{proof}
This is    \cite[Proposition 4.9]{FS} for our case.
\end{proof}

Actually, we can show that some rows do not appear in our case, since $\Gal(K/\bQ) \cong S_3 \times C_2$ (cf. Proposition \ref{prop: Kd and Cd notwist}).
As this simple observation already illustrates, to apply this proposition, we need to compute some arithmetic invariants, including Galois groups and conductor exponents. 
We carry out these computations in the next section.

\section{Some arithmetic invariants} \label{sec: arith inv}

\subsection{Parametrization and some invariants}

In this section, we consider twists of type $D_{12}^A$. 
For this type, the parameter $(u, v)$ in Definition \ref{def:uvsz} is $(1, d)$ by \cite[p. 209]{CL}.
Substituting $(u, v) = (1, d)$, into the equation $u^3 - z^2 = 3s^2v$ in Definition \ref{def:uvsz}, we obtain
\begin{align*}
    1 - z^2 = 3ds^2,
\end{align*}
or equivalently, $3ds^2 + z^2 = 1$. Solving the equation gives
\begin{align*}
    (s, z) = \lbrb{\frac{2}{d+3}, \frac{d-3}{d+3}}.
\end{align*}
By Theorem \ref{thm:CLProp34}, we can define a family of the hyperelliptic curves $C_d$ defined by the equation $y^2 = f_d(x)$, where
\begin{align*}
    f_d(x)=\frac{27(d-3)}{d+3}\left(x^6-\frac{12d}{d-3}x^5-5dx^4+\frac{40d^2}{3(d-3)}x^3+\frac{5d^2}{3}x^2-\frac{4d^3}{3(d-3)}x-\frac{d^3}{27}\right)
\end{align*}
for $d \neq 0, \pm 3$. We note that since $d \neq 0$, there is no confusion with $C_0 : y^2 = x^6 + 1$.
We denote by $J_d$ the Jacobian of $C_d$.
Note that $\disc(f_d)=2^{26}3^{21}d^{15}$. By Definition \ref{def:uvsz}, $\alpha_d$ is a root of
\begin{align} \label{eqn: defining alpha}
    x^3-\frac{3}{4}x-\frac{d-3}{4(d+3)}
\end{align}
whose discriminant is
\begin{align*}
    \frac{81d}{4(d+3)^2}.
\end{align*}
Therefore, the splitting field of the polynomial (\ref{eqn: defining alpha}) is $\bQ(\alpha_d, \sqrt{d})$.
Contrary to the previous sections, we attach the subscript $d$ in the definitions of the fields $K, K_i,  L$, and the element $\alpha$. Namely, these will be denoted by $K_d,K_{i,d},L_d$, and $\alpha_d$, respectively.

\begin{proposition} \label{prop: Kd and Cd notwist}
The family $\lcrc{C_d}$ for square-free integers $d$ satisfies the following: \\
(i) $K_d = \bQ(\alpha_d, \sqrt{d}, \sqrt{-3})$. \\
(ii) No two distinct elements of the family are hyperelliptic twists of each other.
\end{proposition}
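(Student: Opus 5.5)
For (i), I will use Theorem \ref{thm:CLthm2}, which gives the field of definition of $\Aut_{\Qbar}(C_d)$ as $K_d=\bQ(\alpha_d,\sqrt{u},\sqrt{v},\sqrt{-3})$. Substituting $(u,v)=(1,d)$ gives $K_d=\bQ(\alpha_d,\sqrt{d},\sqrt{-3})$ directly. To make this usable later, I will also determine the degree. The cubic \eqref{eqn: defining alpha} has discriminant $81d/(4(d+3)^2)$, which lies in $d\cdot(\bQ^\times)^2$. So its splitting field is $\bQ(\alpha_d,\sqrt{d})$.

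I will next check that the cubic is irreducible over $\bQ$. Multiplying by $4(d+3)$ and substituting $x=w/(2(d+3))$ or similar turns a rational root into an integral condition. Equivalently, $\alpha=\cos\theta$-type parametrization: $4\alpha^3-3\alpha=z=(d-3)/(d+3)$ says $\alpha=\cos(\phi/3)$ with $\cos\phi=z$ and $\sin\phi=\pm\sqrt{3d}\,s$. A rational root would give a rational point with a tripling relation, which I would rule out for square-free $d\neq 1,3$ (or $d\ne -3$), e.g.\ by a rational root test. I also note that $d=-3$ is excluded, and that $d=-3$ in square-class terms, i.e.\ $\sqrt{d}\in\bQ(\sqrt{-3})$, must be handled separately. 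This shows $\Gal(\bQ(\alpha_d,\sqrt d)/\bQ)\cong S_3$ for non-square $d$. Since $\sqrt{-3}$ does not lie in this $S_3$-field when $d\neq -3$, we get $\Gal(K_d/\bQ)\cong S_3\times C_2\cong D_{12}$, consistent with the type $D_{12}^A$ and with $3\mid[\widetilde K:\bQ]$ as in case (ii) of Definition \ref{def:uvsz}.

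For (ii), suppose $C_d$ and $C_{d'}$ are hyperelliptic twists of each other. A hyperelliptic twist does not change the $G_\bQ$-module structure of the automorphism group, because the hyperelliptic involution is central and Galois-fixed, so twisting by a cocycle valued in $\{\pm1\}$ acts by inner conjugation by $\pm1$, which is trivial. Hence $\Aut_{\Qbar}(C_d)\cong\Aut_{\Qbar}(C_{d'})$ as $G_\bQ$-groups. By the classification in Theorem \ref{thm: CL classification}, the $G_\bQ$-group determines the data $(K,K_1,K_2,K_3)$ up to the conjugacy ambiguity. In particular it determines the field of definition and its distinguished quadratic subfield $K_2=\bQ(\sqrt v)=\bQ(\sqrt d)$. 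So $d\equiv d'$ modulo squares, and since both are square-free, $d=d'$.

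The main obstacle is making rigorous that the quadratic field $K_2$ is an invariant of the $G_\bQ$-group rather than of the chosen presentation. I would argue intrinsically: $K_2$ is the fixed field of the kernel of the composite $G_\bQ\to\Aut(2D_{12})\to\bZ/2$, for a specific character determined by the $D_{12}^A$ conjugacy class, namely the action on the cyclic subgroup of order $3$ or $6$, or on $\langle V\rangle$. This is invariant under conjugation by $\Aut(2D_{12})$. Failing that, I would instead compare the fields $K_d=\bQ(\alpha_d,\sqrt d,\sqrt{-3})$ directly. Their quadratic subfields are exactly $\bQ(\sqrt d)$, $\bQ(\sqrt{-3})$ and $\bQ(\sqrt{-3d})$, so equality of $K_d$ and $K_{d'}$ forces $d'\in\{d,-3d\}$ up to squares. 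I would then exclude $d'=-3d$ by checking that $\bQ(\sqrt{d})$ is the quadratic subfield of the $S_3$ part, i.e.\ the splitting field of the cubic.
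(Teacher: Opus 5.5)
Your proposal is correct and follows essentially the paper's route: (i) is read off from Theorem \ref{thm:CLthm2} with $(u,v)=(1,d)$, and (ii) uses that a hyperelliptic twist leaves the $G_\bQ$-group structure of the automorphism group unchanged, while that structure determines $K_{d,2}=\bQ(\sqrt{d})$. Your extra step, identifying $\bQ(\sqrt d)$ intrinsically as the fixed field of the character given by the Galois action on the rotation subgroup $C_6$ of the reduced automorphism group, is more careful than the paper, which just cites Theorem \ref{thm: CL classification}; the Galois-group and irreducibility discussion in (i) is not needed for the statement.
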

\begin{proof}
Since we choose $D_{12}^A$-type, $u = 1, v = d, v' = -3d$ and
\begin{align*}
K_{d, 1} = \bQ, \quad  K_{d, 2} = \bQ(\sqrt{d}), \quad  K_{d, 3} = \bQ(\sqrt{-3d}), \quad K_d = \bQ(\alpha_d, \sqrt{d}, \sqrt{-3})
\end{align*}
by Theorem \ref{thm:CLthm2}.
By Theorem \ref{thm:CLProp34}, the $\bQ$-isomorphism class is determined by the above fields and the hyperelliptic twists parameter $k$.
If $d_1$ and $d_2$ are distinct in $\bQ^\times/(\bQ^\times)^2$, then $K_{d_1, 2} \not\cong K_{d_2, 2}$, so $C_{d_1}$ and $C_{d_2}$ are not $\bQ$-isomorphic by Theorem \ref{thm: CL classification}.
Since a hyperelliptic twist does not change the $G_{\bQ}$-group structure on the automorphism group, $C_{d_1}$ and $C_{d_2}$ cannot be hyperelliptic twists of each other.
\end{proof}

\begin{remark} \label{rmk: choices}
Since the solution of the equation $3ds^2 + z^2 = 1$ is not unique,
there may be several possible choices for the equation $f_d$.
Let $C'_d$ be the curve arising from the other choices of solutions $s, z$.
Since $\Aut_{\overline{\bQ}}(C_d) \cong \Aut_{\overline{\bQ}}(C'_d)$ as $G_{\bQ}$-groups, they are hyperelliptic twists of each other, by Theorem \ref{thm: CL classification}.
Hence, our choice of solutions corresponds to the choice of a $\bQ$-isomorphism class
within the hyperelliptic twist family.
In principle, one might worry that such a choice could be tailored so as to bias the average analytic rank toward a particular value.
However, we emphasize that the same construction is applied uniformly for all values of $d$.
In particular, our choice may not be canonical, but it is made in a uniform manner with respect to $d$, and therefore does not involve selecting preferred representatives within each family.
Finally, the ordering of our family is defined in terms of the parameter $|d|$, and hence does not depend on the particular equation used to represent the chosen $\bQ$-isomorphism class.
\end{remark}

\begin{lemma} \label{lem: Q alpha sqrtd S3}
Let $d$ be an integer such that $d \neq 0, \pm 3$. Then, the polynomial (\ref{eqn: defining alpha}) is irreducible.
Furthermore if $d$ is square-free and $d \neq 1$, then $\bQ(\alpha_d, \sqrt{d})/\bQ$ is $S_3$-extension.
\end{lemma}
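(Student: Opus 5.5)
The plan is to reduce the irreducibility statement to a Diophantine condition on a putative rational root and then rule out all solutions by a gcd argument. The $S_3$ claim will follow from the discriminant already computed above.

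\textbf{Normalising the cubic.} Substitute $x = y/2$ in (\ref{eqn: defining alpha}) and multiply by $8$. This gives
\begin{align*}
    y^3 - 3y - 2z, \qquad z = \frac{d-3}{d+3}.
\end{align*}
Since a cubic over $\bQ$ is reducible iff it has a rational root, suppose $r \in \bQ$ is a root. The Chebyshev-type factorisations
\begin{align*}
    y^3 - 3y - 2 = (y-2)(y+1)^2, \qquad y^3 - 3y + 2 = (y+2)(y-1)^2
\end{align*}
give
\begin{align*}
    (r-2)(r+1)^2 = 2z - 2 = -\frac{12}{d+3}, \qquad (r+2)(r-1)^2 = 2z + 2 = \frac{4d}{d+3}.
\end{align*}
Dividing the second by the first expresses $d$ in terms of $r$:
\begin{align*}
    d = -\frac{3(r+2)(r-1)^2}{(r-2)(r+1)^2}.
\end{align*}
The excluded values $d = 0, 3$ correspond to $r = 1, 0$, and $d=-3$ is the pole. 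The value $r = 2$ cannot occur, since then $(r-2)(r+1)^2 = 0 \neq -12/(d+3)$.

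\textbf{Integrality forces finitely many $r$.} Write $r = a/b$ in lowest terms with $b > 0$, so that
\begin{align*}
    d = -\frac{3(a+2b)(a-b)^2}{(a-2b)(a+b)^2}.
\end{align*}
The pairwise gcds between numerator and denominator factors are small:
\begin{itemize}
    \item $\gcd(a+2b, a+b) = 1$ and $\gcd(a-b, a-2b) = 1$;
    \item $\gcd(a+2b, a-2b) \mid 4$;
    \item $\gcd(a-b, a+b) \mid 2$.
\end{itemize}
Hence integrality of $d$ forces $(a-2b)(a+b)^2$ to divide $3$ times a bounded power of $2$. In particular $a+b \in \{\pm1, \pm2\}$ and $a - 2b$ divides $48$.

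This leaves a finite list of pairs $(a,b)$. The main labour of the proof is a short case check over this list: each candidate should give either $d \in \{0, 3\}$ (from $r = 1, 0$), or a non-integer $d$. For example, $r = 3$ gives $d = -15/4$. The main obstacle is carrying out this gcd bookkeeping carefully, especially the powers of $2$, so that no case slips through. If needed, I would also reduce modulo $3$ to see whether $a-2b$ must be divisible by $3$. This proves irreducibility.

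\textbf{The $S_3$ claim.} Assume $d$ is square-free and $d \neq 1$. The cubic is irreducible and its discriminant is $81d/(4(d+3)^2)$, which is a rational square iff $d$ is a square. A square-free integer $d \neq 1$ is not a square, so the Galois group of the irreducible cubic is $S_3$ rather than $A_3$. As noted before Proposition \ref{prop: Kd and Cd notwist}, the splitting field is $\bQ(\alpha_d, \sqrt{d})$. It therefore has degree $6$ and is an $S_3$-extension of $\bQ$.
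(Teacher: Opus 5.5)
Your proposal is correct and is essentially the paper's argument. After your rescaling $y=2x$, your factors $a+b$ and $a-2b$ play the roles of the paper's $2a+b$ and $b-a$; in both cases coprimality forces these factors to be tiny ($\pm$ powers of $2$ in the paper, $a+b\in\{\pm1,\pm2\}$ and $a-2b\mid 48$ for you), and a finite check then leaves only $d\in\{0,3\}$ (note that $r=-2$ also gives $d=0$, in addition to $r=1$). Your discriminant argument for the $S_3$ claim is also correct, and it makes explicit what the paper's proof leaves implicit.
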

\begin{proof}
Let $r = a/b$ be a rational solution of the polynomial (\ref{eqn: defining alpha}) such that $a, b$ are relatively prime integers.
Then 
\begin{align*}
    d = -3\frac{4r^3 - 3r + 1}{4r^3 - 3r - 1}= 3 \frac{(b + a)(2a - b)^2}{(b-a)(2a  +b)^2} \in \bZ.
\end{align*}
Suppose that there is an odd prime $p$ such that $p \mid (2a + b)$. 
It does not divide $a$ and $b$. On the other hand,
\begin{align*}
    (2a + b) - 2(a+ b) = -b, \qquad 
    (2a + b) - (2a - b) = 2b
\end{align*}
gives that $p \nmid (a+b), (2a - b)$, which leads a contradiction.
Similarly, if an odd prime $p$ satisfies that $ p \mid (b-a)$ and $p \mid (b+a)$, then $p \mid 2b$, and if $p \mid (b - a)$ and $p \mid (2a - b)$ then $p \mid a$.
So the only possibilities are that
\begin{align*}
    2a + b = \pm 2^k, \qquad b - a = \pm 2^m
\end{align*}
for $k, m \geq 0$. Hence
\begin{align*}
    d &= 3 \frac{ \lbrb{\frac{2}{3}(2a + b) + \frac{1}{3}(b-a)} \lbrb{\frac{1}{3}(2a+b) - \frac{4}{3}(b-a)}^2 }{(b-a)(2a + b)^2}
    = \pm \frac{\lbrb{2^{k+1} \pm 2^{m} } \lbrb{2^k \mp 2^{m+2} }^2}{9 \cdot 2^{2k+m}} \\
    &= \pm \frac{\lbrb{2 \pm 2^{m-k} } \lbrb{1 \mp 2^{m - k+2} }^2}{9 \cdot 2^{m-k}}.
\end{align*}
By comparing the $2$-adic valuation, we know that $d$ is not an integer when $m - k > 1$.
If $m - k = 0, 1$, a direct computation shows that $d \in \lcrc{0, \pm 3}$.
The case $m - k < 0$ is handled in the same way by comparing the $2$-adic valuation.
\end{proof}

\begin{proposition} \label{prop: simple}
Suppose that $d \neq 0, \pm 3$ is a square-free integer. The Jacobian $J_d$ is simple over $\bQ$.
\end{proposition}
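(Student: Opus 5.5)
The plan is to prove simplicity by computing the algebra of $\bQ$-endomorphisms, $\mathrm{End}_\bQ(J_d)\otimes\bQ$. It suffices to show that this algebra has no nontrivial idempotents. An isogeny $J_d\sim E\times E'$ over $\bQ$ would produce the idempotent projecting onto $E$, so ruling out idempotents rules out any splitting.

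Over $\Qbar$ the twist $C_d$ is isomorphic to $C_0$. Moreover $\Jac(C_0)\sim E_0^2$ with $E_0:y^2=x^3+1$, which gives
\[
\mathrm{End}_{\Qbar}(J_d)\otimes\bQ\cong M_2(\bQ(\sqrt{-3})).
\]
The descent to $\bQ$ is then
\[
\mathrm{End}_\bQ(J_d)\otimes\bQ=\bigl(\mathrm{End}_{\Qbar}(J_d)\otimes\bQ\bigr)^{G_\bQ}.
\]
The Galois action factors through $\Gal(K_d/\bQ)$. By Proposition \ref{prop: Kd and Cd notwist} and Lemma \ref{lem: Q alpha sqrtd S3}, $K_d=\bQ(\alpha_d,\sqrt d,\sqrt{-3})$ with $\Gal(K_d/\bQ)\cong S_3\times C_2$.

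The main step is to make this action explicit through the automorphism group. By Theorem \ref{thm:CLthm2}, $\Aut_{\Qbar}(C_d)=\lara{U,V,-1}\subset\GL_2(\Qbar)$, and each automorphism acts on $H^0(C_d,\Omega^1)$ through the corresponding matrix in $\GL_2$. I will first check that the $\bQ$-span of the images of $\Aut_{\Qbar}(C_d)$ in $\mathrm{End}(J_d)\otimes\bQ$ is all of $M_2(\bQ(\sqrt{-3}))$. This should follow because $2D_{12}$ acts irreducibly on the $2$-dimensional space of differentials and contains elements of order $3$ and $6$. The $G_\bQ$-action on the endomorphism algebra is then determined by the action on these generators, $\sigma(\phi)={}^\sigma\phi$. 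That action is the one recorded by $U,V$ together with the $G_\bQ$-action on their entries $\alpha_d,\sqrt d,\sqrt{-3}$ ($u=1$, $v=d$).

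Next I compute the fixed subalgebra. Complex conjugation, or any element acting nontrivially on $\sqrt{-3}$, acts semilinearly on $M_2(\bQ(\sqrt{-3}))$. The element of order $3$ in $S_3$, which moves $\alpha_d$, acts by conjugation by an element of order $3$ that is not central. The transposition $\sqrt d\mapsto-\sqrt d$ acts by conjugation composed with the action on entries. I expect the invariants to collapse to $\bQ$ once the two facts below are combined:
\begin{itemize}
\item the $3$-cycle has no nontrivial fixed idempotent, since its fixed vectors in $\bQ(\sqrt{-3})^2$ would be forced into a line that the $\sqrt d$-involution does not preserve;
\item $\bQ(\sqrt d)\neq\bQ(\sqrt{-3})$, so the quadratic resolvent of the $S_3$-extension is not $M$.
\end{itemize}
This last point uses $d\neq-3$, $d$ square-free and $d\neq1$. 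A division algebra that is a field, such as $\bQ$ or an imaginary quadratic field, would equally suffice.

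The main obstacle is exactly this identification of the Galois action on $M_2(\bQ(\sqrt{-3}))$ through $U,V$, together with the invariant computation, which I regard as the heart of the proof. As a consistency check I would use Proposition \ref{prop: list Lp}. If $J_d\sim E\times E'$ over $\bQ$, then $E$ and $E'$ have $j=0$, so they are sextic twists of $E_0$, whose twisting characters have kernels containing Kummer fields $\bQ(\sqrt[3]{D},\sqrt{-3})$. A prime with $I(p)=(3,3,1)$, i.e. split in $M$ and inert in the cubic field $\bQ(\alpha_d)$, gives $a_{p,1}(J_d)=a_p$. That value must equal $-(a_p(E)+a_p(E'))$ with $a_p(E),a_p(E')\in\{\mathrm{Re}(\zeta\pi)\}$, where $\zeta$ ranges over sixth roots of unity, $\pi$ is the Frobenius element of $E_0$ at $p$, and the relevant prime of $\bQ(\sqrt{-3})$ above $p$ is $(\pi)$. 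By Chebotarev, this would force the cubic field $\bQ(\alpha_d)$ to be a pure cubic field, whose resolvent is $\bQ(\sqrt{-3})$. That contradicts $\bQ(\sqrt d)\neq\bQ(\sqrt{-3})$.
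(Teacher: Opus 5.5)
Your main route, computing $\mathrm{End}^0_\bQ(J_d)$ as the $G_\bQ$-invariants of $M_2(F)$ with $F=\bQ(\sqrt{-3})$ the center, differs from the paper's and is viable. But the decisive invariant computation is only announced (``I expect''), and the first bullet that is supposed to drive it is false as stated.

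\textbf{The first bullet.} An element $\tau$ of order $3$ in $\Gal(K_d/M)$ fixes the center. By Skolem--Noether it acts as $\mathrm{Ad}(g)$ with $g\in\GL_2(F)$ non-scalar and $g^3=c\in F^\times$. The minimal polynomial of $g$ is quadratic and divides $x^3-c$. Since $F$ contains the cube roots of unity, $g$ is diagonalizable over $F$ with distinct eigenvalues. So the fixed algebra of $\tau$ is $F[g]\cong F\times F$, which does contain the two eigenprojections.

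\textbf{How to repair it.} What removes the eigenprojections is a transposition $\tau'\in\Gal(K_d/M)$. It acts as $\mathrm{Ad}(h)$ with $hgh^{-1}\in F^\times g^{-1}$, so $h$ permutes the two eigenlines of $g$. It must swap them, since otherwise $\mathrm{Ad}(h)$ would commute with $\mathrm{Ad}(g)$. This uses two inputs:
\begin{itemize}
\item $\Gal(K_d/M)\cong S_3$, which is your second bullet;
\item faithfulness of the $\Gal(K_d/\bQ)$-action on $M_2(F)$. This follows from the definition of $K_d$ and the Galois-equivariant injection $\Aut_{\Qbar}(C_d)\hookrightarrow\mathrm{End}(J_{d,\Qbar})$; your spanning claim is not needed.
\end{itemize}
One then gets $\mathrm{End}^0_M(J_d)=F$. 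Complex conjugation acts nontrivially on this center, so $\mathrm{End}^0_\bQ(J_d)=\bQ$. This repairs the argument, but only for $d\neq 1$.

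\textbf{The case $d=1$.} The statement includes $d=1$, which you explicitly exclude. There $K_1=\bQ(\zeta_9)$ and $\Gal(K_1/M)\cong C_3$. The fixed algebra over $M$ is $F\times F$, and you would still have to show that complex conjugation swaps the two factors. It does, since $J_1$ is simple; so $\mathrm{End}^0_\bQ(J_1)\cong\bQ(\sqrt{-3})$ and the invariants do \emph{not} collapse to $\bQ$. Nothing in your plan proves this.

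\textbf{The Frobenius check.} Your ``consistency check'' does not close the gap either. At a single prime with $I(p)=(3,3,1)$, the required relation $a_p(E)+a_p(E')=-a_p$ is satisfied by $\mathrm{Tr}(\omega\pi)+\mathrm{Tr}(\bar\omega\pi)=-\mathrm{Tr}(\pi)$. So reaching ``$\bQ(\alpha_d)$ is pure cubic'' needs a genuine Chebotarev comparison of cubic residue symbols with splitting in $M(\alpha_d)$, which you do not supply. Also, $E$ and $E'$ are only $\bQ$-isogenous to curves with $j=0$, not necessarily of $j=0$ themselves.

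\textbf{The paper's route.} The paper uses the same Proposition \ref{prop: list Lp} much more directly. $\Gal(K_d/\bQ)$ contains an element of order $6$ acting nontrivially on $\sqrt{-3}$; this holds also for $d=1$, where the group is $C_6$. Chebotarev then gives good primes with $I(p)=(6,6,2)$ or $(12,6,2)$. At such a prime the Frobenius polynomial is $T^4\mp pT^2+p^2$, which is irreducible over $\bQ$. Hence $J_d\bmod p$ is simple, and so is $J_d$. Your completed route gives more, namely $\mathrm{End}^0_\bQ(J_d)=\bQ$ for $d\neq1$, but the paper's argument is shorter and uniform in $d$.
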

\begin{proof}
For a non-simple abelian variety $A \sim B \times C$ and a prime $p$ where $A$ has good reduction, we have $A_{\bF_p} \sim B_{\bF_p} \times C_{\bF_p}$.
Therefore, it suffices to show that $A_{\bF_p}$ is simple for a good prime $p$.

Since $\Gal(K_d/\bQ) \cong S_3 \times C_2$ and $\Gal(M/\bQ) \cong C_2$, we know that there are rational primes $p$ such that $I(p) = (6, 6, 2)$ or $I(p) = (12,6,2)$ for each $d \neq 0, \pm 3$.
By Proposition \ref{prop: list Lp}, the characteristic polynomial of the Frobenius automorphism at $p$ is $T^4 - pT^2 + p^2$ or $T^4 + pT^2 + p^2$, which are irreducible over $\bQ$.
By Honda--Tate theory, $A_{\bF_p}$ is simple if the characteristic polynomial is irreducible. 
Hence, $J_d$ is simple over $\bQ$.
\end{proof}

\begin{lemma} \label{lem: fd splitting field}
Let $F_d$ be the splitting field of $f_d$ over $\bQ$.
Then, $F_d = \mathbb{Q}(\alpha_d,\sqrt{d},\sqrt{3})$.
\end{lemma}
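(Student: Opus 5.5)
The plan is to exhibit the six roots of $f_d$ explicitly in terms of $\alpha_d$, $\sqrt d$ and $\sqrt 3$, and then compare fields in both directions. The structure of Theorem \ref{thm:CLProp34} suggests how. The curve is a twist of $y^2=x^6+1$, whose roots are $\zeta_{12}^{\mathrm{odd}}$, and the coefficients $(z,s)$ satisfy $z^2+3ds^2=1$. So
\begin{align*}
w:=z+s\sqrt{-3d}=\frac{(d-3)+2\sqrt{-3d}}{d+3}
\end{align*}
has norm $1$. I expect $f_d$, after rescaling, to be a ``real part of a sixth power'' polynomial.

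\textbf{Step 1: normalize.} Substitute $x=\sqrt{d/3}\,\tau$. The coefficient pattern ($x^{6-2j}$ paired with $d^{j}$, and odd terms carrying an extra $d/(d-3)$) shows this clears $d$ except through $\sqrt{3d}$ in the odd coefficients and through $z,s$. I will then verify, by expanding binomials, that up to a constant
\begin{align*}
f_d\bigl(\sqrt{d/3}\,\tau\bigr)\ \propto\ \bar w\,(\tau+i)^6-w\,(\tau-i)^6 \qquad\text{or}\qquad z\,\mathrm{Re}\bigl((\tau+i)^6\bigr)+s\sqrt{3d}\,\mathrm{Im}\bigl((\tau+i)^6\bigr),
\end{align*}
possibly after swapping the roles of real and imaginary parts. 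This matches the $27z,\,-162sv,\,-135vz,\dots$ coefficients of Theorem \ref{thm:CLProp34}, which are binomial coefficients $\binom{6}{j}$ times powers of $3$. Consequently the roots are the $\tau$ with $\bigl(\frac{\tau-i}{\tau+i}\bigr)^6=\bar w/w$. Formally, writing $w=e^{i\phi}$, they are $\tau_k=\cot\bigl(\tfrac{\phi}{6}+\tfrac{k\pi}{6}\bigr)$ up to sign conventions, for $k=0,\dots,5$.

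\textbf{Step 2: express the roots algebraically.} By Definition \ref{def:uvsz}, $\alpha_d$ is a root of $4x^3-3x=z=\cos\phi$, so $\alpha_d=\cos(\phi/3+2\pi j/3)$. Then $\cot(\phi/6+k\pi/6)$ can be written via half-angle formulas as $\frac{\sin\psi}{1-\cos\psi}$ with $\psi=\phi/3+k\pi/3$. Here $\cos\psi$ is $\pm$ a conjugate of $\alpha_d$. The sine $\sin\psi$ is obtained from $\sin(3\psi)=\pm s\sqrt{3d}$ via $\sin\psi=\sin 3\psi/(3-4\sin^2\psi)=\pm s\sqrt{3d}/(4\cos^2\psi-1)$. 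Hence each $\tau_k\in\bQ(\alpha_d^{(j)},\sqrt{3d})$. Multiplying back by $\sqrt{d/3}$, each root lies in $\bQ(\alpha_d',\sqrt d,\sqrt 3)$, where $\alpha_d'$ ranges over the conjugates. By the discriminant computation, all conjugates of $\alpha_d$ lie in $\bQ(\alpha_d,\sqrt d)$. This gives $F_d\subseteq\bQ(\alpha_d,\sqrt d,\sqrt3)$.

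\textbf{Step 3: reverse inclusion.} From the explicit formulas, $\sqrt{d/3}$ is recovered from ratios or sums of roots. For example, the product of all roots lies in $\bQ$, and suitable symmetric combinations of pairs $\tau_k,\tau_{k+3}$ give $(d/3)\cdot(\text{rational})$ or $\sqrt{d/3}\cdot(\text{element of }\bQ(\alpha_d))$. Likewise $\alpha_d$ is recovered from $\cos\psi=(\tau^2-1)/(\tau^2+1)$ applied to a root. I will use Lemma \ref{lem: Q alpha sqrtd S3} to know $[\bQ(\alpha_d,\sqrt d):\bQ]=6$, and then check that $\sqrt3$ (equivalently $\sqrt d$ separately, not just $\sqrt{3d}$) is also forced. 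One check is via $\disc(f_d)=2^{26}3^{21}d^{15}$, whose square class is $3d$, so $\sqrt{3d}\in F_d$. Another is that the difference of two roots $\tau_k-\tau_{k+2}$ involves $\sqrt{d/3}\cdot\sqrt{3}$-type factors coming from $\sin(\pi/3)$.

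\textbf{Main obstacle.} The hardest part is getting Step 1 exactly right, including signs and which of $w,\bar w$ appears, and then confirming in Step 3 that $\sqrt 3$ itself, and not merely $\sqrt{3d}$, lies in $F_d$. For the latter I would compute $\tau_0\tau_2\tau_4$ or a similar partial product explicitly and show it equals a rational multiple of $\sqrt{d}$, or of $\sqrt{3}$, times something in $\bQ(\alpha_d)$.
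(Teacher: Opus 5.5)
Your Steps 1--2 give a more explicit route to $F_d\subseteq\bQ(\alpha_d,\sqrt d,\sqrt3)$ than the paper's degree count. However, Step 1 has the wrong sign, and this matters later. Write $(\tau+i)^6=P(\tau)+iQ(\tau)$ with $P,Q\in\bZ[\tau]$. A direct expansion gives
\begin{align*}
f_d\bigl(\sqrt{d/3}\,\tau\bigr)=d^3\bigl(zP(\tau)-s\sqrt{3d}\,Q(\tau)\bigr)=\frac{d^3}{2}\bigl(w(\tau+i)^6+\bar w(\tau-i)^6\bigr),
\end{align*}
with $w=z+is\sqrt{3d}$, $\bar w=z-is\sqrt{3d}$, and $w\bar w=1$. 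So with $\rho=\frac{\tau-i}{\tau+i}$ the roots satisfy $\rho^6=-w^2$, not $\bar w/w$. Your two displayed candidates are not equivalent, and you derived the roots from the wrong one. The roots are $\rho=\eta\zeta_{12}^{2k+1}$ with $\eta^3=w$, i.e.\ your angles shifted by $\pi/12$. The inclusion of Step 2 survives, since the shift only brings in $\cos\frac{\pi}{6}=\frac{\sqrt3}{2}$ and $\sin\frac{\pi}{6}=\frac12$. But in Step 3, the element $c=\frac{\tau^2-1}{\tau^2+1}=\frac{3x^2-d}{3x^2+d}=\frac{\rho+\rho^{-1}}{2}$ attached to a root $x$ satisfies $4c^3-3c=\pm s\sqrt{3d}$. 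It is not $\pm$ a conjugate of $\alpha_d$, so your recovery of $\alpha_d$ from a single root fails.

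The genuine gap is that you never prove $\sqrt3\in F_d$ (equivalently $\sqrt d\in F_d$). The discriminant of $f_d$ only gives $\sqrt{3d}$, and the computation you propose cannot help. Once the roots are indexed by consecutive angles, $\{\tau_0,\tau_2,\tau_4\}$ are exactly the roots of one of the two conjugate cubic factors of $f_d$ over $\bQ(\sqrt{3d})$ (those with $\rho^3=iw$, resp.\ $\rho^3=-iw$). So their product, and every symmetric function of them, lies in $\bQ(\sqrt{3d})$.

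The missing idea is to use the discriminant of such a cubic factor. The paper computes the discriminants of the two factors over $\bQ(\sqrt{3d})$ as $16d^3(d+3)^2/(\sqrt d\mp\sqrt3)^4$. Their square roots are products of root differences, hence lie in $F_d$. Since $(\sqrt d\mp\sqrt3)^2=d+3\mp2\sqrt{3d}\in F_d$, this gives $\sqrt d\in F_d$. For $\alpha_d$, the paper uses $K_d\subseteq F_d(\sqrt{-3},\sqrt{-1})$ from Fit\'e--Sutherland together with a degree argument.

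Within your framework you can get both directly:
\begin{enumerate}
\item The three values of $c$ coming from one cubic factor are the roots of $4c^3-3c=\pm s\sqrt{3d}$. Made monic, this cubic has discriminant $\frac{27}{16}(1-3ds^2)=\frac{27}{16}z^2$ with $z\neq0$, which gives $\sqrt3\in F_d$.
\item The two roots with $\rho=\eta\zeta_{12}^{\pm1}$ give $c+c'=\sqrt3\cdot\frac{\eta+\eta^{-1}}{2}$, where $\frac{\eta+\eta^{-1}}{2}$ is a root of $4x^3-3x=z$. Hence a conjugate of $\alpha_d$ lies in $F_d$, and so does $\alpha_d$ because $F_d/\bQ$ is Galois.
\end{enumerate}
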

\begin{proof}
We recall that $K_d=\mathbb{Q}(\alpha_d,\sqrt{d},\sqrt{-3})$. By \cite[Remark 5.2]{FS}, we have $K_d\subseteq F_d(\sqrt{-3},\sqrt{-1})$. On the other hand, $f_d$ admits a factorization 
\begin{align}\label{eqn: fd two cubic factor}
    f_d(x) &= \lbrb{x^3-\left(\frac{6d}{d-3}+\frac{d+3}{d-3}\sqrt{3d}\right)x^2-dx+\frac{d}{9}\left(\frac{6d}{d-3}+\frac{d+3}{d-3}\sqrt{3d}\right) } \nonumber \\
    &\times  \lbrb{x^3-\left(\frac{6d}{d-3}-\frac{d+3}{d-3}\sqrt{3d}\right)x^2-dx+\frac{d}{9}\left(\frac{6d}{d-3}-\frac{d+3}{d-3}\sqrt{3d}\right)} 
\end{align}
over $\mathbb{Q}(\sqrt{3d})$ whose discriminants are
\begin{align*}
    \disc\left(x^3-\left(\frac{6d}{d-3}\pm \frac{d+3}{d-3}\sqrt{3d}\right)x^2-dx+\frac{d}{9}\left(\frac{6d}{d-3} \pm \frac{d+3}{d-3}\sqrt{3d}\right)\right)=\frac{16d^3(d+3)^2}{(\sqrt{d}\mp \sqrt{3})^4}.
\end{align*}
Since the splitting field of a cubic over $\bQ$ is the minimal field containing a zero and the square root of the discriminant, we conclude that $\bQ(\sqrt{3d}, \sqrt{d}) = \mathbb{Q}(\sqrt{d},\sqrt{3})\subseteq F_d$. Since $F_d/\mathbb{Q}$ is Galois, the factorization \eqref{eqn: fd two cubic factor} shows that $[F_d:\mathbb{Q}]\leq12$. Moreover, since $K_d \subset F_d(\sqrt{-3}, \sqrt{-1})$, and $\mathbb{Q}(\alpha_d)/\mathbb{Q}$ is cubic by \ref{lem: Q alpha sqrtd S3}, we conclude that $[F_d:\mathbb{Q}]=12$. This also shows that $F_d(\sqrt{-3},\sqrt{-1})=F_d(\sqrt{-1}).$
Combining all these, we get the following tower of finite extensions:
\begin{align*}
    \xymatrix@!0@R=3.5pc@C=7.5pc{
    & F_d(\sqrt{-1}) \ar@{-}[dl]_-2 \ar@{-}[d]^-3 \ar@{-}[dr]^-2 & \\
    K_d \ar@{-}[d]_-4 \ar@{-}[dr]^-3 & \mathbb{Q}(\sqrt{d},\sqrt{3},\sqrt{-1}) \ar@{-}[d]^-2 \ar@{-}[dr]^-2 & F_d \ar@{-}[d]^-3 \\
    \mathbb{Q}(\alpha_d) \ar@{-}[dr]_-3 & \mathbb{Q}(\sqrt{d},\sqrt{-3}) \ar@{-}[d]^-4 & \mathbb{Q}(\sqrt{d},\sqrt{3})\rlap{\ .} \ar@{-}[dl]^-4 \\
    & \mathbb{Q} &
}
\end{align*}
This shows that $F_d(\sqrt{-1})=\mathbb{Q}(\alpha_d,\sqrt{d},\sqrt{3},\sqrt{-1})$ and hence $F_d=\mathbb{Q}(\alpha_d,\sqrt{d},\sqrt{3})$.
\end{proof}


\begin{lemma} \label{lem: quad split of fd}
Let $d \neq 0, 1, \pm 3$ be a square-free integer and let $a, b, c$ be roots of 
\begin{align*}
    x^3+\frac{12d}{d-3}x^2-4dx-\frac{16d^2}{3(d-3)} \in \bQ[x].
\end{align*}
(1) Over $\bQ(\alpha_d, \sqrt{d})$, we have
\begin{align*}
    f_d(x) = \frac{27(d-3)}{d+3} \lbrb{x^2 + ax - \frac{d}{3} } \lbrb{x^2 + bx - \frac{d}{3} } \lbrb{x^2 + cx - \frac{d}{3} }.
\end{align*}
(2) Over $\bQ(\alpha_d)$, we have
\begin{align*}
f_d(x)=\frac{27(d-3)}{d+3}\left(x^2+ax-\frac{d}{3}\right)\left(x^4+(b+c)x^3+\left(bc-\frac{2}{3}d\right)x^2-\frac{d}{3}(b+c)x+\frac{d^2}{9}\right)
\end{align*}
for a suitable $a\in\mathbb{Q}(\alpha_d)$.
\end{lemma}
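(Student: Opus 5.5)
Statement (1) is a polynomial identity. Expanding the product of the three quadratics $x^2+ax-\frac{d}{3}$, $x^2+bx-\frac{d}{3}$, $x^2+cx-\frac{d}{3}$ gives coefficients in the elementary symmetric functions of $a,b,c$:
\begin{align*}
x^6 + e_1x^5 + \lbrb{e_2 - d}x^4 + \lbrb{e_3 - \tfrac{2d}{3}e_1}x^3 + \lbrb{\tfrac{d^2}{3} - \tfrac{d}{3}e_2}x^2 + \tfrac{d^2}{9}e_1 x - \tfrac{d^3}{27}.
\end{align*}
The cubic gives $e_1=-\frac{12d}{d-3}$, $e_2=-4d$ and $e_3=\frac{16d^2}{3(d-3)}$. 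Substituting, the product becomes
\begin{align*}
x^6-\tfrac{12d}{d-3}x^5-5dx^4+\tfrac{40d^2}{3(d-3)}x^3+\tfrac{5d^2}{3}x^2-\tfrac{4d^3}{3(d-3)}x-\tfrac{d^3}{27},
\end{align*}
which is exactly the bracket in $f_d$. I will verify each coefficient, for instance $\frac{16d^2}{3(d-3)}+\frac{8d^2}{d-3}=\frac{40d^2}{3(d-3)}$. This proves the factorization over the splitting field of the cubic.

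It remains to show that this splitting field is $\bQ(\alpha_d,\sqrt d)$. I would show that the cubic has a root that is a rational (affine) function of $\alpha_d$. The natural guess comes from Cardona--Lario's construction: the automorphism $U$ of Theorem \ref{thm:CLthm2}, with $u=1$, $v=d$, $\beta=(1-2\alpha^2+\alpha z)/(3s)$, has fixed points solving $\frac{3\beta}{d}x^2-2\alpha x-\beta=0$. This is $x^2-\frac{2\alpha d}{3\beta}x-\frac d3$, whose constant term $-d/3$ matches. So I would set $a=-\frac{2\alpha_d d}{3\beta_d}$ and check directly that it satisfies the cubic, using $4\alpha^3=3\alpha+z$ with $z=\frac{d-3}{d+3}$ to reduce powers of $\alpha$.

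Alternatively, I would compute the discriminant of the cubic in $x$. It should equal $d$ times a rational square, matching the discriminant $\frac{81d}{4(d+3)^2}$ of (\ref{eqn: defining alpha}). Combined with the rational relation between a root and $\alpha_d$, this identifies the splitting field with $\bQ(\alpha_d,\sqrt d)$, which has Galois group $S_3$ by Lemma \ref{lem: Q alpha sqrtd S3}. Finding and verifying the explicit expression of $a$ in terms of $\alpha_d$ is the main obstacle. I would first try the fixed-point formula above and fall back to a Tschirnhaus-transform comparison if it fails.

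For (2), take $a$ to be the root lying in $\bQ(\alpha_d)$ found above. Multiplying the remaining two quadratic factors gives
\begin{align*}
x^4+(b+c)x^3+\lbrb{bc-\tfrac{2d}{3}}x^2-\tfrac d3(b+c)x+\tfrac{d^2}{9}.
\end{align*}
Here $b+c=e_1-a$ and $bc=e_3/(-a)\cdot(-1)$, that is $bc=-e_3/a$ (note $a\neq0$ since $e_3\neq0$). Both lie in $\bQ(\alpha_d)$. Hence the quartic has coefficients in $\bQ(\alpha_d)$, giving (2).
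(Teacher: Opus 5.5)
Your expansion in (1) is correct. Your $x^3$-coefficient $e_3-\frac{2d}{3}e_1$ is the right one; the paper's intermediate display has $-\frac{d}{3}$ there. Reducing (2) to the existence of one root $a\in\bQ(\alpha_d)$ is also fine, apart from a sign slip: $abc=e_3$ gives $bc=e_3/a$, not $-e_3/a$.

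The gap is the step that puts $a,b,c$ in $\bQ(\alpha_d,\sqrt d)$. Your candidate $a=-\frac{2\alpha_d d}{3\beta_d}$ is \emph{not} a root of $P(x)=x^3+\frac{12d}{d-3}x^2-4dx-\frac{16d^2}{3(d-3)}$. Fixed points of an involution need not be roots of $f_d$. The six reflections in the reduced automorphism group $D_{12}$ form two conjugacy classes, and only one class fixes roots. Since $V^3$ (which acts by $x\mapsto-\frac{d}{3x}$) is the central rotation, $U$ and $V^3U$ lie in different classes. It is $V^3U$ whose fixed points, the roots of $x^2+\frac{2\beta_d}{\alpha_d}x-\frac d3$, are roots of $f_d$; the two candidate values of $a$ multiply to $-\frac{4d}{3}$.

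Concretely, take $d=-1$. Then $P=x^3+3x^2+4x+\frac43$, whose real root is $\approx-0.4767$. Also $\alpha_{-1}\approx-1.0979$ (the real root of $4x^3-3x+2$) and $\beta_{-1}=\frac{1-2\alpha_{-1}^2-2\alpha_{-1}}{3}\approx0.2617$. So $2\beta_{-1}/\alpha_{-1}\approx-0.4767$, but $-\frac{2\alpha_{-1}d}{3\beta_{-1}}\approx-2.797$. The check you plan therefore fails. Your ``alternative'' via the discriminant still rests on the same explicit relation, and the Tschirnhaus fallback is not specified. As written, only the polynomial identity over $\Qbar$ is established, and (2) has no root $a\in\bQ(\alpha_d)$ to start from.

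There are two ways to close this.

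The paper uses no explicit root. Each quadratic factor has its two roots among those of $f_d$, so $a,b,c\in F_d$. Since $\disc P=\frac{2^8d^3(d+3)^4}{(d-3)^4}$, the splitting field of $P$ contains $\sqrt d$, so it is either $\bQ(\sqrt d)$ or a Galois subfield of $F_d$ with group $S_3$. Because $\Gal(F_d/\bQ)\cong S_3\times C_2$ (Lemma \ref{lem: fd splitting field}) has a unique normal subgroup of order $2$, that subfield is $\bQ(\alpha_d,\sqrt d)$. For (2), take $a$ to be the root fixed by the generator of $\Gal(\bQ(\alpha_d,\sqrt d)/\bQ(\alpha_d))$, which acts on $\{a,b,c\}$ as a transposition.

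Alternatively, your explicit route goes through with $a=\frac{2\beta_d}{\alpha_d}=-\frac{2(2\alpha_d-1)(\alpha_d+1)}{\alpha_d(2\alpha_d+1)}$. Substitute $d=\frac{3(1+\alpha_d)(2\alpha_d-1)^2}{(1-\alpha_d)(2\alpha_d+1)^2}$, which is equivalent to $4\alpha_d^3-3\alpha_d=\frac{d-3}{d+3}$; then $P(a)$ vanishes identically. Next, $(b-c)^2=\disc P/P'(a)^2$ is $d$ times a square in $\bQ(\alpha_d)$, so $b,c\in\bQ(\alpha_d,\sqrt d)$.

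The two routes trade off differently. The corrected explicit version is fully explicit and independent of the Galois-group computation in Lemma \ref{lem: fd splitting field}. The paper's version avoids all algebra in $\alpha_d$ but leans on that lemma.
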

\begin{proof}
Let  $a, b, c$ be the roots of the polynomial 
\begin{align*}
    x^3+\frac{12d}{d-3}x^2-4dx-\frac{16d^2}{3(d-3)},
\end{align*}
so that 
\begin{align*}
    a + b + c = -\frac{12d}{d-3}, \quad ab + bc + ca = -4d, \quad abc = \frac{16d^2}{3(d-3)}.
\end{align*}
It satisfies
\begin{align*}
    &\lbrb{x^2 + ax - \frac{d}{3}} \lbrb{x^2 + bx - \frac{d}{3}} \lbrb{x^2 + cx - \frac{d}{3}} \\
    &= x^6 + (a + b  + c)x^5 + (-d + ab + bc +ca)x^4 + \lbrb{-\frac{d}{3}(a + b + c) + abc}x^3 \\ 
    & \qquad  +\lbrb{\frac{d^2}{3} - \frac{d}{3}(ab + bc + ca) }x^2 + \frac{d^2}{9}(a +b+c)x - \frac{d^3}{27} \\
    &= x^6-\frac{12d}{d-3}x^5-5dx^4+\frac{40d^2}{3(d-3)}x^3+\frac{5d^2}{3}x^2-\frac{4d^3}{3(d-3)}x-\frac{d^3}{27}.
\end{align*}
Hence, we have a factorization 
\begin{align*}
    f_d(x) = \frac{27(d-3)}{d+3}  \lbrb{x^2 + ax - \frac{d}{3}} \lbrb{x^2 + bx - \frac{d}{3}} \lbrb{x^2 + cx - \frac{d}{3}}
\end{align*}
in $F_d$. Since
\begin{align*}
    \disc \lbrb{x^3+\frac{12d}{d-3}x^2-4dx-\frac{16d^2}{3(d-3)}} = \frac{2^8d^3(d+3)^4}{(d-3)^4},
\end{align*}
the splitting field of the polynomial is $\bQ(\sqrt{d})$, or $S_3$-extension between $\bQ(\sqrt{d})$ and $F_d$, which is exactly $\bQ(\alpha_d, \sqrt{d})$.
Hence (1) follows.

We note that the non-trivial automorphism of $\Gal(\bQ(\sqrt{d})/\bQ)$ permutes two of $a, b, c$, say, $b$ and $c$. Then the factorization over $\bQ(\alpha_d)$ follows.
\end{proof}

\begin{proposition} \label{prop: L bound} $L_d=K_d\left(\sqrt{6(d+3)}\right)=\mathbb{Q}\left(\alpha_d,\sqrt{d},\sqrt{-3},\sqrt{6(d+3)}\right)$.
\end{proposition}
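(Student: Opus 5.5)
The plan is to compute one explicit $\overline{\bQ}$-isomorphism $\phi : C_d \to C_0$ and read off its field of definition. Every other isomorphism is then of the form $\psi\circ\phi$ with $\psi \in \Aut_{\overline{\bQ}}(C_0)$. Moreover, $\Aut_{\overline{\bQ}}(C_0)$ is the group generated by $x\mapsto \zeta x$, $x \mapsto 1/x$ and $-1$, with $\zeta$ a primitive sixth root of unity, so it is defined over $\bQ(\sqrt{-3})\subseteq K_d$. Hence $L_d = K_d(\text{coefficients of }\phi)$, using the inclusion $K_d \subseteq L_d$: the automorphisms of $C_d$ are $\phi^{-1}\circ\psi\circ\phi$, so they are defined over $L_d$. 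So it suffices to find $\phi$ whose coefficients lie in $K_d(\sqrt{6(d+3)})$ and generate it over $K_d$.

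\emph{Step 1 (the Möbius part).} Write $\phi(x,y) = \bigl(\frac{mx+n}{px+q}, \lambda y\frac{mq-np}{(px+q)^3}\bigr)$. The matrix $\begin{pmatrix} m&n\\ p&q\end{pmatrix}$ must carry the six roots of $f_d$ to the six roots of $x^6+1$ (up to scalar). Use Lemma \ref{lem: quad split of fd}: the roots of $f_d$ come in three pairs $\{r, -\frac{d}{3r}\}$, the roots of the three quadratics $x^2+ax-\frac d3$, $x^2+bx-\frac d3$, $x^2+cx-\frac d3$. The roots of $x^6+1$ come in three pairs $\{\omega,-\omega\}$, each pair a root pair of a factor of $x^6+1=(x^2+1)(x^4-x^2+1)$ over $\bQ(\sqrt{-3})$. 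The idea is to choose the transformation so that the involution $x\mapsto -\frac{d}{3x}$ of $C_d$ corresponds to $x\mapsto -x$. Then choose the remaining freedom so that the $S_3$-permutation of $a,b,c$ (Galois over $\bQ(\sqrt d)$) matches the $C_3$-action $x\mapsto \zeta^2 x$. I expect the matrix to be expressible via Cardona--Lario's matrices $U,V$ of Theorem \ref{thm:CLthm2} with $(u,v)=(1,d)$. Indeed the equation in Theorem \ref{thm:CLProp34} is obtained in \cite{CL} by transforming $x^6+1$ by an explicit matrix built from $\alpha,\beta,\sqrt v,\sqrt{-3}$. So I would extract that matrix from \cite{CL} and specialize it, checking that its entries lie in $K_d=\bQ(\alpha_d,\sqrt d,\sqrt{-3})$.

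\emph{Step 2 (the scaling $\lambda$).} Substituting the Möbius map into $y^2=x^6+1$ gives $(px+q)^6\bigl((\tfrac{mx+n}{px+q})^6+1\bigr)=c\,f_d(x)$ for some constant $c\in K_d^\times$. Then $\lambda^2$ equals $c$ up to the factor $(mq-np)^2$, so $L_d=K_d(\sqrt{c'})$ for an explicit $c'\in K_d^\times$. The remaining task is to show that $c'\equiv 6(d+3)$ modulo $(K_d^\times)^2$. To do so I would compute $c'$ by comparing leading coefficients: the leading coefficient of $f_d$ is $\frac{27(d-3)}{d+3}$, which gives $c'$ times a norm-like product of the matrix entries. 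I would then simplify using the identities $1-z^2=3ds^2$ and $s=\frac{2}{d+3}$, together with the fact that $3$, $d$ and $-3$ are squares in $K_d$ up to the obvious factors. For instance, $-1\equiv 3$ and $\sqrt{3d}\in K_d$, so I must keep track of $2$, $3$ and $d+3$ modulo squares. This should reduce $c'$ to $6(d+3)$.

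The main obstacle is Step 1, specifically pinning down the transforming matrix explicitly and verifying that its entries lie in $K_d$ rather than a larger field. As a fallback I would argue abstractly. The cocycle $\sigma\mapsto \phi^\sigma\phi^{-1}$ restricted to $G_{K_d}$ lands in the centre $\{\pm1\}$, since $G_{K_d}$ acts trivially on $\Aut(C_d)$. Hence $[L_d:K_d]\le 2$, and $L_d=K_d(\sqrt{c'})$ for some $c'$. I would then determine $c'$ modulo squares by the leading-coefficient comparison above, or by testing it against the $I(p)$ data of Proposition \ref{prop: list Lp} at a few primes.
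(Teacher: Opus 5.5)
Your overall route is the paper's. Your reduction is correct: $\Aut(C_0)$ is defined over $\mathbb{Q}(\sqrt{-3})\subseteq K_d$, so $L_d$ is $K_d$ adjoined the field of definition of a single isomorphism. Your fallback bound $[L_d:K_d]\le 2$, via the cocycle on $G_{K_d}$ landing in the centre $\{\pm1\}$, is also correct. The plan to match $z\mapsto -\frac{d}{3z}$ with $z\mapsto -z$ and then compare leading coefficients is exactly what the paper does.

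The gap is that you stop where the proof begins. You never show that some isomorphism has Möbius part defined over $K_d$, and you never show that the resulting constant is $6(d+3)$ modulo $(K_d^\times)^2$; both are only expected. Two of the steps you do describe would also not work as stated.

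\emph{The normalization is not free.} You cannot simply choose $\Phi$ with $\Phi\sigma\Phi^{-1}=(z\mapsto -z)$. That element is central in the reduced group $D_{12}$, so it is not conjugate to the other fixed-point-free involutions of the six roots, namely the reflections such as $z\mapsto 1/z$. Post-composing with $\Aut(C_0)$ never moves between these two classes, so the reflection case must be excluded by an argument. The paper does this by writing $\Phi(z)=\frac{z+w\sqrt{-d/3}}{wz+\sqrt{-d/3}}$ and comparing the ratio of the $x^4$ and $x^6$ coefficients. This forces $(w^2-1)^2=0$, which makes $\Phi$ singular.

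\emph{The Cardona--Lario matrices do not help.} The matrices $U,V$ of Theorem \ref{thm:CLthm2} generate $\Aut(C_d)$ inside $\GL_2(\Qbar)$. They are not an isomorphism $C_d\to C_0$, so they do not supply $\Phi$.

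\emph{The missing computation.} Normalize $\Phi(\sqrt{-d/3})=0$ and $\Phi(-\sqrt{-d/3})=\infty$, so that $\Phi(z)=w\,\frac{z-\sqrt{-d/3}}{z+\sqrt{-d/3}}$. Comparing coefficients with $f_d$ gives $w^6=\left(\frac{\sqrt d-\sqrt{-3}}{\sqrt d+\sqrt{-3}}\right)^2$. This alone only yields $w^3\in K_d$. You need a further input: $\Phi^{-1}(\pm i)$ are the two roots of one of the quadratic factors $x^2+ax-\frac d3$ of Lemma \ref{lem: quad split of fd}. This expresses $w^2$ rationally in $\sqrt d$, $\sqrt{-3}$ and some $u\in\{-a,-b,-c\}\subset\mathbb{Q}(\alpha_d,\sqrt d)$, so $w^2\in K_d$ and hence $w\in K_d$. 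Only then does the leading-coefficient ratio $\frac{d+3}{27(d-3)}(1+w^6)=\frac{2(d+3)}{27(\sqrt d+\sqrt{-3})^2}$ identify the extension as $K_d(\sqrt{6(d+3)})$.

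\emph{An error in your square-class bookkeeping.} You claim $\sqrt{3d}\in K_d$, but this fails in general. The quadratic subfields of $K_d$ are $\mathbb{Q}(\sqrt d)$, $\mathbb{Q}(\sqrt{-3})$ and $\mathbb{Q}(\sqrt{-3d})$, so only $\sqrt{-3d}$ lies in $K_d$. Since $3$ is generally not a square in $K_d$, this slip would change the answer, for example from $6(d+3)$ to $2(d+3)$.

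\emph{The fallback does not close the gap.} Comparing with the $I(p)$ data of Proposition \ref{prop: list Lp} at a few primes cannot pin down a class in $K_d^\times/(K_d^\times)^2$ uniformly in $d$.
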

\begin{proof}
Whenever there is an isomorphism between $C_d$ and $C_0$, it induces a bijection between the set of roots of $f_d(x)$ and the set of roots of $x^6+1$. Note that the linear fractional transformations on $\mathbb{C}$ inducing a bijection between the set of roots of $f_d(x)$ and $x^6+1$ are in $\mathrm{PGL}_2(F_d(i))$. Indeed, suppose that a linear fractional transformation corresponding to
\begin{align*}
    \Phi=\begin{pmatrix} \Phi_{11} & \Phi_{12} \\ \Phi_{21} & \Phi_{22} \end{pmatrix}\in\mathrm{PGL}_2(\mathbb{C})
\end{align*}
is determined by triples of distinct points $u_1,u_2,u_3\in\mathbb{C}$ and $v_1,v_2,v_3\in\mathbb{C}$:
\begin{align*}
    \frac{\Phi_{11}u_i+\Phi_{12}}{\Phi_{21}u_i+\Phi_{22}}=v_i,\quad i=1,2,3.
\end{align*}
Since these equations are linear in the entries $\Phi_{ij}$, if there is a number field $F$ containing all the $u_i,v_i$, then we can always choose a representative of $\Phi$ in $\mathrm{GL}_2(F)$. In particular, we may take $F=F_d(i)$ because it contains the roots of both $f_d$ and $x^6+1$.

Next, consider the transformation
\begin{align*}
    \xymatrix{\sigma:\mathbb{P}^1(\mathbb{C}) \ar[r] & \mathbb{P}^1(\mathbb{C}) & z \ar@{|->}[r] & \displaystyle-\frac{d}{3z}\rlap{\ .}}
\end{align*}
This acts as an involution on the set of roots of the polynomial of the form
\begin{align*}
    x^2+ux-\frac{d}{3}\in\mathbb{C}[x]
\end{align*}
for $d\neq0$ because $x=0$ is not a root of this polynomial and
\begin{align}\label{fdquadfactor}
    \left(-\frac{d}{3x}\right)^2+u\left(-\frac{d}{3x}\right)-\frac{d}{3}=-\frac{d}{3x^2}\left(x^2+ux-\frac{d}{3}\right).
\end{align}
Moreover, this defines an automorphism of $C_d$ over $\mathbb{Q}(\sqrt{-3d})$:
\begin{align*}
    \xymatrix{C_d \ar[r]^-\sim & C_d & (x,y) \ar@{|->}[r] & \displaystyle\left(\sigma(x),\left(\frac{1}{x}\sqrt{-\frac{d}{3}}\right)^3y\right)\rlap{\ .}}
\end{align*}
The fixed points of $\sigma$ in $\mathbb{P}^1(\mathbb{C})$ are
\begin{align*}
    z=\pm\sqrt{-\frac{d}{3}}.
\end{align*}
This becomes the root of \eqref{fdquadfactor} if and only if
\begin{align*}
    u=\pm2\sqrt{-\frac{d}{3}}.
\end{align*}
Since $a$, $b$, $c$ in Lemma \ref{lem: quad split of fd} (1) cannot be in $\mathbb{Q}(\sqrt{-3d})$, the $\sigma$ defines a fixed-point-free involution on the set $\mathcal{R}_d\subseteq\mathbb{C}$ of roots of $f_d$. On the other hand, the group of linear fractional transformations preserving the set $\mathcal{R}_0\subseteq\mathbb{C}$ of roots of $x^6+1$ is isomorphic to $D_{12}$ generated by the rotation $z\mapsto\zeta_6z$ and the inversion $z\mapsto z^{-1}$. Recall that an isomorphism from $C_d$ to $C_0$ is represented by a linear fractional transformation corresponding to $\Phi\in\mathrm{PGL}_2(F_d(i))$.

Pushing the involution $\sigma$ along $\Phi$, we get a fixed-point-free involution $\Phi\sigma\Phi^{-1}$ on $\mathcal{R}_0$. There are two types of fixed-point-free involutions on $D_{12}$ up to conjugation:
\begin{itemize}
\item The reflection $z\mapsto z^{-1}$.
\item The rotation $z\mapsto-z$.
\end{itemize}
Suppose first that $\Phi\sigma\Phi^{-1}$ is in the same conjugacy class with $z\mapsto z^{-1}$. Multiplying $\Phi$ by some element in $\mathrm{PGL}_2(F_d(i))$, we may assume that $(\Phi\sigma\Phi^{-1})(z)=z^{-1}$ as a linear fractional transform on $\mathbb{C}$. Note that the fixed point of $z\mapsto z^{-1}$ are $\pm1$. Note that $z\mapsto-z$ induces a valid automorphism on $y^2=x^6+1$. Multiplying $\Phi$ by $z\mapsto-z$ if necessary, we may assume that
\begin{align*}
    \Phi\left(\sqrt{-\frac{d}{3}}\right)=1,\quad\Phi\left(-\sqrt{-\frac{d}{3}}\right)=-1
\end{align*}
each of which is equivalent to
\begin{align*}
    \Phi_{11}\sqrt{-\frac{d}{3}}+\Phi_{12}=\Phi_{21}\sqrt{-\frac{d}{3}}+\Phi_{22},\quad\Phi_{11}\sqrt{-\frac{d}{3}}-\Phi_{12}=-\Phi_{21}\sqrt{-\frac{d}{3}}+\Phi_{22}.
\end{align*}
Then we may write
\begin{align*}
    \Phi(z)=\frac{\displaystyle z+w\sqrt{-\frac{d}{3}}}{\displaystyle wz+\sqrt{-\frac{d}{3}}},\quad w\in F_d(i).
\end{align*}
If $\Phi\in\mathrm{GL}_2(F_d(i))$ induces this transformation, for example
\begin{align*}
    \Phi=\begin{pmatrix} 1 & \displaystyle w\sqrt{-\frac{d}{3}} \\ w & \displaystyle\sqrt{-\frac{d}{3}} \end{pmatrix},
\end{align*}
then $(\det\Phi)\Phi$ defines an isomorphism over $F_d(i)$:
\begin{align*}
    \xymatrix{\displaystyle\left\{y^2=\left(x+w\sqrt{-\frac{d}{3}}\right)^6+\left(wx+\sqrt{-\frac{d}{3}}\right)^6\right\} \ar[r]^-\sim & \left\{y^2=x^6+1\right\}\rlap{\ .}}
\end{align*}
Comparing the ratio of the $x^4$ coefficient to the $x^6$ coefficient of the left hand side and $f_d(x)$, we get
\begin{align*}
    \frac{-5dw^2(w^2+1)}{1+w^6}=-5d\implies\frac{w^2}{w^4-w^2+1}=1\implies(w^2-1)^2=0.
\end{align*}
However, then the matrix $\Phi$ above becomes singular, yielding a contradiction. Therefore, we have $(\Phi\sigma\Phi^{-1})(z)=-z$. Note that the fixed points of $z\mapsto-z$ on $\mathbb{P}^1(\mathbb{C})$ are $0$ and $\infty$. Composing $\Phi$ with
\begin{align*}
    \xymatrix{\left\{y^2=x^6+1\right\} \ar[r]^-\sim & \left\{y^2=x^6+1\right\} & (x,y) \ar@{|->}[r] & \displaystyle\left(\frac{1}{x},\frac{y}{x^3}\right)}
\end{align*}
if necessary, we may assume that
\begin{align*}
    \Phi\left(\sqrt{-\frac{d}{3}}\right)=0,\quad\Phi\left(-\sqrt{-\frac{d}{3}}\right)=\infty.
\end{align*}
each of which is equivalent to
\begin{align*}
    \Phi_{11}\sqrt{-\frac{d}{3}}+\Phi_{12}=0,\quad\Phi_{21}\sqrt{-\frac{d}{3}}-\Phi_{22}=0.
\end{align*}
Then we may write
\begin{align*}
    \Phi(z)=w\frac{\displaystyle z-\sqrt{-\frac{d}{3}}}{\displaystyle z+\sqrt{-\frac{d}{3}}},\quad w\in F_d(i).
\end{align*}
 Again, if $\Phi\in\mathrm{GL}_2(F_d(i))$ induces this transformation, for example
\begin{align*}
    \Phi=\begin{pmatrix} w & \displaystyle -w\sqrt{-\frac{d}{3}} \\ 1 & \displaystyle\sqrt{-\frac{d}{3}} \end{pmatrix},
\end{align*}
then $(\det\Phi)\Phi$ defines an isomorphism over $F_d(i)$:
\begin{align*}
    \xymatrix{\displaystyle\left\{y^2=w^6\left(x-\sqrt{-\frac{d}{3}}\right)^6+\left(x+\sqrt{-\frac{d}{3}}\right)^6\right\} \ar[r]^-\sim & \left\{y^2=x^6+1\right\}\rlap{\ .}}
\end{align*}
Comparing the coefficients of $\frac{d+3}{27(d-3)}f_d$ and
\begin{align*}
    &\quad\frac{1}{1+w^6}\left(w^6\left(x-\sqrt{-\frac{d}{3}}\right)^6+\left(x+\sqrt{-\frac{d}{3}}\right)^6\right)\\
    &=x^6+6\sqrt{-\frac{d}{3}}\frac{1-w^6}{1+w^6}x^5-5dx^4-\frac{20}{3}d\sqrt{-\frac{d}{3}}\frac{1-w^6}{1+w^6}x^3+\frac{5d^2}{3}x^2+\frac{2}{3}d^2\sqrt{-\frac{d}{3}}\frac{1-w^6}{1+w^6}x-\frac{d^3}{27}
\end{align*}
we get the following equations:
\begin{align*}
    6\sqrt{-\frac{d}{3}}\frac{1-w^6}{1+w^6}=-\frac{12d}{d-3},\quad-\frac{20}{3}d\sqrt{-\frac{d}{3}}\frac{1-w^6}{1+w^6}=\frac{40d^2}{3(d-3)},\quad\frac{2}{3}d^2\sqrt{-\frac{d}{3}}\frac{1-w^6}{1+w^6}=-\frac{4d^3}{3(d-3)}
\end{align*}
all of which are equivalent to
\begin{align*}
    \frac{1-w^6}{1+w^6}=\frac{2}{d-3}\sqrt{-3d}\iff w^6=\frac{\displaystyle d-3-2\sqrt{-3d}}{\displaystyle d-3+2\sqrt{-3d}}=\left(\frac{\sqrt{d}-\sqrt{-3}}{\sqrt{d}+\sqrt{-3}}\right)^2.
\end{align*}
Therefore, $w^3\in K_d$. Next, taking $\Phi(x)=\pm i$, we get
\begin{align*}
    x=\frac{w\pm i}{w\mp i}\sqrt{-\frac{d}{3}}
\end{align*}
so, from Lemma \ref{lem: quad split of fd} (1), we get
\begin{align*}
    u:=\left(\frac{w+i}{w-i}+\frac{w-i}{w+i}\right)\sqrt{-\frac{d}{3}}=\frac{2(w^2-1)}{w^2+1}\sqrt{-\frac{d}{3}}\in\{-a,-b,-c\}
\end{align*}
and hence
\begin{align*}
w^2=\frac{2\sqrt{d}-u\sqrt{-3}}{2\sqrt{d}+u\sqrt{-3}}\in K_d.
\end{align*}
This shows that $w=w^3/w^2\in K_d$. Now the ratio of the two leading coefficients gives
\begin{align*}
    \frac{d+3}{27(d-3)}(1+w^6)=\frac{d+3}{27(d-3)}\left(1+\frac{d-3-2\sqrt{-3d}}{d-3+2\sqrt{-3d}}\right)=\frac{2(d+3)}{27\left(\sqrt{d}+\sqrt{-3}\right)^2}.
\end{align*}
Therefore, we conclude $L_d=K_d\left(\sqrt{6(d+3)}\right)$.
\end{proof}

\begin{remark} \label{rmk: JKP}
Let $C : y^2 = f(x)$ be a twist of $C_0$ and let $\gamma_k$ be zeros of $f$.
Then a $\overline{\bQ}$-isomorphism $\phi$ from $C$ to $C_0$ satisfies that
\begin{align*}
    \phi(\gamma_{k_1}) = i, \qquad \phi(\gamma_{k_2}) = -i, \qquad \phi(\gamma_{k_3}) = \zeta_3
\end{align*}
for some $k_1, k_2, k_3 \in \lcrc{1, \cdots, 6}$.
Here \cite[Proposition 3.3]{JKP24} claims that this $\phi$ is defined over $\bQ(\gamma_1, \cdots, \gamma_6, i, \zeta_3)$, but this is not true since $f$ is not monic. 
More precisely, the ratio of $y^2=f(x)$ and the transformation of $y^2=x^6+1$ under $\Phi$ (or $(\det\Phi)\Phi$) is required to be a square in the field we are considering.
\begin{itemize}
    \item For the type C in \cite{JKP24}, we apply Theorem \ref{thm:CLthm2} and Theorem \ref{thm:CLProp34} with
    \begin{align*}
        (u,v,\alpha,\beta)=\left(d,-3,\frac{d+1}{2},\frac{d-1}{2}\right).
    \end{align*}
    Then $K=\mathbb{Q}(\sqrt{d},\sqrt{-3})$, and the defining equation can be rewritten as
    \begin{align*}
        y^2=54d^3(x+1)^6+54(x-1)^6.
    \end{align*}
    Note that, if we set
    \begin{align*}
        \Phi=\begin{pmatrix} \sqrt{d} & \sqrt{d} \\ 1 & -1 \end{pmatrix}\in\mathrm{GL}_2(\mathbb{Q}(\sqrt{d}))
    \end{align*}
    then $(\det\Phi)\Phi$ induces an isomorphism over $\mathbb{Q}(\sqrt{d})$:
    \begin{align*}
        \xymatrix{\left\{y^2=d^3(x+1)^6+(x-1)^6\right\} \ar[r]^-\sim & \left\{y^2=x^6+1\right\}\rlap{\ .}}
    \end{align*}
    Therefore, we have $L=K(\sqrt{6})=K(\sqrt{-2})$.
    \item For the type B in \cite{JKP24}, we apply Theorem \ref{thm:CLthm2} and Theorem \ref{thm:CLProp34} with
    \begin{align*}
        (u,v,\alpha,\beta)=\left(1,d,\frac{d-3}{d+3},\frac{2d}{d+3}\right).
    \end{align*}
    Then $K=\mathbb{Q}(\sqrt{d},\sqrt{-3})$, and the defining equation can be rewritten as
    \begin{align*}
        y^2=\frac{-1}{2(d+3)^3}\left(\left(\sqrt{d}+\sqrt{-3}\right)^6\left(\sqrt{-3}x+\sqrt{d}\right)^6+\left(\sqrt{d}-\sqrt{-3}\right)^6\left(\sqrt{-3}x-\sqrt{d}\right)^6\right).
    \end{align*}
    Note that, if we set
    \begin{align*}
        \Phi=\begin{pmatrix} \left(\sqrt{d}+\sqrt{-3}\right)\sqrt{-3} & \left(\sqrt{d}+\sqrt{-3}\right)\sqrt{d} \\ \left(\sqrt{d}-\sqrt{-3}\right)\sqrt{-3} & -\left(\sqrt{d}-\sqrt{-3}\right)\sqrt{d} \end{pmatrix}\in\mathrm{GL}_2(\mathbb{Q}(\sqrt{-3d})),
    \end{align*}
    then $(\det\Phi)\Phi$ induces an isomorphism over $\mathbb{Q}(\sqrt{-3d})$:
    \begin{align*}
        \xymatrix{\left\{y^2=\left(\sqrt{d}+\sqrt{-3}\right)^6\left(\sqrt{-3}x+\sqrt{d}\right)^6+\left(\sqrt{d}-\sqrt{-3}\right)^6\left(\sqrt{-3}x-\sqrt{d}\right)^6\right\} \ar[r]^-\sim & \left\{y^2=x^6+1\right\}\rlap{\ .}}
    \end{align*}
    Therefore, we have $L=K\left(\sqrt{-2(d+3)}\right)$.
\end{itemize}
\end{remark}

\subsection{Cluster pictures and conductors}

Let $A$ be an abelian variety over $\bQ_p$.
Let $t, u, \delta$ be its toric rank, unipotent rank, and the Swan conductor.
Then the conductor exponent of $A$ over $\bQ_p$ is
\begin{align} \label{eqn: cond exp sum}
    t + 2u + \delta
\end{align}
(cf. \cite[(2.3), p. 247]{BK94}).
Since the Swan conductor is zero when $A$ over $\bQ_p$ has tame reduction, the conductor exponent is bounded by $2 \dim A$ if $A$ over $\bQ_p$ has tame reduction.

\begin{lemma} \label{lem: tame p>3}
Let $J_d$ be the Jacobian of $C_d$ over $\bQ$.
Then $J_d$ has tame reduction at $p$ if $p >3$.
\end{lemma}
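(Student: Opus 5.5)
We will show that $J_d$ acquires good reduction over an extension of $\bQ_p$ whose ramification degree is prime to $p$. Tameness is a statement about wild inertia: $J_d$ has tame reduction at $p$ exactly when the wild inertia subgroup $P_p\subset I_p$ acts trivially on the Tate module $V_\ell(J_d)$ for some (equivalently any) prime $\ell\neq p$. By the N\'eron--Ogg--Shafarevich criterion applied over finite extensions, it suffices to find a finite extension $F/\bQ_p$ such that $J_d/F$ has good reduction and $e(F/\bQ_p)$ is prime to $p$. Indeed, the inertia group of $F$ is then an open subgroup of $I_p$ containing all of $P_p$, and it acts trivially on $V_\ell(J_d)$.

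To produce $F$, we use the twist structure from Section \ref{subsec:CL}. By Proposition \ref{prop: L bound}, $C_d$ becomes isomorphic to $C_0:y^2=x^6+1$ over
\[
L_d=\bQ\bigl(\alpha_d,\sqrt{d},\sqrt{-3},\sqrt{6(d+3)}\bigr).
\]
The curve $C_0$ has discriminant supported only at $2$ and $3$. Hence $J_0$ has good reduction at every prime $p>3$, and so does $J_d$ over the completion $F=(L_d)_{\fp}$ for any prime $\fp\mid p$.

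It remains to show that $p\nmid e(F/\bQ_p)$ when $p>3$. First, $\Gal(K_d/\bQ)\cong S_3\times C_2$ by Lemma \ref{lem: Q alpha sqrtd S3} and Proposition \ref{prop: Kd and Cd notwist}. The field $L_d$ is obtained from $K_d$ by adjoining one further square root, so $[L_d:\bQ]$ divides $24=2^3\cdot 3$. Since $e(F/\bQ_p)$ divides the order of the local Galois group, and hence divides $[L_d:\bQ]$, no prime $p>3$ divides it. (For $d$ not square-free, or outside the range of those results, the same bound $[L_d:\bQ]\mid 24$ still holds directly from the explicit generators: a cubic, with Galois closure of degree dividing $6$, together with three quadratic extensions.) Therefore $J_d$ attains good reduction over a tamely ramified extension, so $I_p$ acts on $V_\ell(J_d)$ through a quotient of order prime to $p$, and the reduction is tame.

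The main point needing care is the claim that isomorphism to $C_0$ over $L_d$ transfers good reduction. This is clear because the Jacobians become isomorphic over $L_d$, and $J_0$ has good reduction at every $p>3$: for instance, $C_0$ has a smooth model over $\bZ[1/6]$, since $x^6+1$ is separable modulo $p$ for $p>3$. An alternative route would be the cluster-picture criterion of \cite{DDMM}, but the degree argument above is simpler and avoids computing any clusters.
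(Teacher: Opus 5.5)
Your proof is correct, but it takes a different route from the paper's.

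The paper uses the criterion of \cite[Remark 5.7]{BBB}: $C_d$ (equivalently $J_d$) has tame reduction at $p$ if and only if the splitting field of $f_d$ is tamely ramified over $\bQ_p$. It then invokes Lemma \ref{lem: fd splitting field}, which gives $[F_d:\bQ]=12$, so no prime $p>3$ can divide a ramification index.

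You instead show potential good reduction over a tame extension. You use that $C_d\cong C_0$ over $L_d$, that $C_0$ has good reduction away from $6$, and that $[L_d:\bQ]\mid 24$. Then N\'eron--Ogg--Shafarevich forces wild inertia to act trivially on $V_\ell(J_d)$.

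The trade-offs are these:
\begin{itemize}
\item The paper's route needs only the splitting field of $f_d$, and it feeds directly into the cluster-picture computations that follow.
\item Your route leans on Proposition \ref{prop: L bound}, a considerably more delicate input.
\item In exchange, your route avoids the BBB machinery and gives the stronger conclusion that $J_d$ has potentially good reduction at every $p>3$. The paper re-derives this separately, via the $L_d$-isogeny to $E_0^2$, to get toric rank $0$ in Proposition \ref{prop: bad ap}.
\end{itemize}

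You do not actually need the precise shape of $L_d$, and this also settles your parenthetical worry about other $d$. For any twist of $C_0$ with isomorphism $\phi$, the map $\sigma\mapsto\bigl(\sigma(\phi)\phi^{-1},\,\sigma|_{\bQ(\sqrt{-3})}\bigr)$ is a homomorphism into a (semidirect-product) group structure on $\Aut_{\Qbar}(C_0)\times\Gal(\bQ(\sqrt{-3})/\bQ)$, which has order $48$. Its kernel fixes all isomorphisms. So the field trivializing the twist has degree divisible only by $2$ and $3$, and your argument proves tameness at every $p>3$ for every twist of $C_0$ uniformly.
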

\begin{proof}
It is well known that $C_d$ over $\bQ_p$ has tame reduction if and only if $J_d$ over $\bQ_p$ has tame reduction. This is equivalent to the splitting field of $f_d$ being tamely ramified over $\mathbb{Q}_p$ (cf. \cite[Remark 5.7]{BBB}).
By Lemma \ref{lem: fd splitting field}, we have $\Gal(F_d/\bQ) \cong S_3 \times C_2$.
Hence, if a rational prime divides the order of a subgroup of $\Gal(F_d/\bQ)$, it must be $2$ or $3$.
\end{proof}

\begin{proposition} \label{prop: cond bound}
There is an absolute constant $c$ such that the conductor of $C_d$ over $\bQ$ is bounded by $c d^4(d+3)^4$.
\end{proposition}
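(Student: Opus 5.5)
The conductor is $N(J_d)=\prod_p p^{e_p}$. I would bound the local exponents $e_p$ separately for three kinds of primes: good primes, bad primes $p>3$, and $p\in\{2,3\}$.

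\emph{Bad primes.} A prime $p>3$ of bad reduction must divide $\disc(f_d)$ or a leading or denominator coefficient of $f_d$. Since $\disc(f_d)=2^{26}3^{21}d^{15}$, such a $p$ must divide $d$ or $d+3$. The leading coefficient $27(d-3)/(d+3)$ can be absorbed by a hyperelliptic-twist-invariant change of model over $\bQ_p$, so whatever remains divides $d(d+3)$. By Lemma \ref{lem: tame p>3}, $J_d$ is tame at every $p>3$. Hence $\delta=0$, and by \eqref{eqn: cond exp sum} we get $e_p=t+2u\le 2\dim J_d=4$. For all primes $p>3$ together this gives
\[
\prod_{p>3} p^{e_p}\le \prod_{p\mid d(d+3)} p^4\le d^4(d+3)^4 .
\]

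\emph{The primes $2$ and $3$.} Here the Swan term may be nonzero, but it is bounded uniformly in $d$. By the Brumer--Kramer bound \cite{BK94}, the conductor exponent of an abelian surface over $\bQ_p$ is bounded by an absolute constant depending only on $p$ and the dimension: $e_2\le 20$ and $e_3\le 10$ suffice. Alternatively, one can note that by Lemma \ref{lem: fd splitting field} and Proposition \ref{prop: L bound} the relevant wild inertia lives in fields $\bQ_p(\alpha_d,\sqrt d,\sqrt3,\sqrt{6(d+3)},i)$ of bounded degree. Either way, $2^{e_2}3^{e_3}\le c$ for an absolute constant $c$. Combining this with the bound for $p>3$ gives $N(J_d)\le c\,d^4(d+3)^4$, with absolute values understood where $d<0$.

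\emph{Main obstacle.} The delicate step is showing that every bad prime $p>3$ really divides $d(d+3)$. The coefficients of $f_d$ involve $d-3$ in denominators, and the leading coefficient involves $d+3$. I would first pass to the integral model obtained by scaling $x$ and $y$ and multiplying through by $(d+3)(d-3)$. I would then check that primes dividing $d-3$ but not $d(d+3)$ give good reduction. Two ways to verify this are to rewrite $f_d$ in the symmetric form
\[
w^6\Bigl(x-\sqrt{-d/3}\Bigr)^6+\Bigl(x+\sqrt{-d/3}\Bigr)^6
\]
from the proof of Proposition \ref{prop: L bound}, or to use the cluster picture of \cite{DDMM,BBB}. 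Since $p\nmid d$ there, the roots stay distinct mod $p$. If this fails, I would weaken the claim to primes dividing $d(d-3)(d+3)$. The tameness bound $e_p\le 4$ would still hold, but the stated exponent would need adjusting. So the main work is a careful $p$-adic check of the model at primes dividing $d\pm3$.
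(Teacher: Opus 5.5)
Your skeleton is the paper's: tameness at $p>3$ gives $e_p=t+2u\le 4$, Brumer--Kramer bounds $e_2$ and $e_3$ uniformly, and what remains is to show that every bad prime $p>3$ divides $d(d+3)$. That last step, which you flag as the main work, is not established, and the justification you sketch is not valid. Absorbing the leading coefficient $27(d-3)/(d+3)$ is not a change of model of $C_d$, and neither is multiplying the equation by $(d+3)(d-3)$. Rescaling $y$ only multiplies $f_d$ by squares; multiplying by a non-square is a quadratic (hyperelliptic) twist, which is a different curve and can have a different conductor. Your fallback, weakening to primes dividing $d(d-3)(d+3)$, would not give the stated bound $c\,d^4(d+3)^4$.

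The worry is in fact unfounded, because $d-3$ never occurs in a denominator. Expanding gives
\[
(d+3)f_d(x)=27(d-3)x^6-324dx^5-135d(d-3)x^4+360d^2x^3+45d^2(d-3)x^2-36d^3x-d^3(d-3)\in\bZ[x].
\]
So for $p\nmid 6d(d+3)$ the model $y^2=f_d(x)$ is $p$-integral, and its discriminant $2^{26}3^{21}d^{15}$ is a $p$-unit. Hence $C_d$ has good reduction at $p$, including when $p\mid d-3$. A leading coefficient vanishing mod $p$ only moves a Weierstrass point to $\infty$; the binary-sextic discriminant being a unit still gives a smooth reduction.

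The paper handles this uniformly with Liu's results. It uses the integral model $y^2=(d+3)^2f_d(x)$, which is a square rescaling and so still a model of $C_d$. Then $\ff(C_d/\bQ_p)\le\ord_p\disc_{\min}\le\ord_p\disc((d+3)^2f_d)$, and $\disc((d+3)^2f_d)=2^{26}3^{21}d^{15}(d+3)^{20}$. So the radical of the conductor divides $6d(d+3)$. With this step supplied, the rest of your argument goes through as written.
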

\begin{proof}
Note that $y^2=f_d(x)$ may not be $\mathbb{Z}_p$-integral for $p \mid(d+3)$. To remedy this, we replace $y$ by $(d+3)^{-1}y$ and hence the given equation by $y^2=(d+3)^2f_d(x)$. For each rational prime $p$, we have
\begin{align*}
    \mathfrak{f}(C_d/\mathbb{Q}_p)\leq\ord_p\disc_{\min}(C_d/\mathbb{Q}_p)\leq\ord_p\disc((d+3)^2f_d)
\end{align*}
where the first inequality comes from \cite[THEOREME 1]{Liu94} and the second from \cite[Proposition 2]{Liu94}. Note that we need a $\mathbb{Z}_p$-integral equation to apply \cite[Proposition 2]{Liu94}. Hence the conductor of $C_d$ divides $\disc((d+3)^2f_d)=2^{26}3^{21}d^{15}(d+3)^{20}$.
Therefore, the radical of the conductor divides $6d(d+3)$.
For $p > 3$, Lemma \ref{lem: tame p>3} gives the bound $4$ for the conductor exponent. 
For $p = 2, 3$, we can use \cite[Theorem 6.2]{BK94} instead.
\end{proof}
We will compute the exact conductor exponent for $p>3$.

We first introduce the concept of cluster picture, following \cite[Definition 3.1, Notation 3.4]{BBB}.
For a given polynomial $f$ over $\bQ_p$, let $\cR$ be the set of roots of $f$.
We say that $\fs \subset \cR$ is a \emph{cluster} if 
\begin{align*}
    \fs = \cR \cap \lcrc{x \in \overline{\bQ_p} : \ord_p(x - z) \geq d}
\end{align*}
for some $z \in \overline{\bQ_p}$ and $d \in \bQ$.
We note that a singleton is also regarded as a cluster.
When $\fs$ is a cluster of size $> 1$, we say that $\fs$ is \emph{proper}.
For a cluster $\fs$ of size $\geq 2$, we define the \emph{depth} by
\begin{align*}
    d_{\fs} := \min_{r, r' \in \fs} \ord_p(r - r').
\end{align*}
If $\fs'$ is a maximal subcluster of $\fs$, we say that $\fs'$ is a \emph{child} of $\fs$. 
For a cluster $\mathfrak{s}$, we denote by $P(\mathfrak{s})$ the cluster such that $\mathfrak{s}$ is its child, and call it the \emph{parent} of $\mathfrak{s}$.
If $\fs$ is a cluster of even cardinality whose children are all even, then we say that $\fs$ is \emph{\"ubereven}.
For a cluster $\fs, \fs'$, we define $\fs \wedge \fs'$ as the smallest cluster containing $\fs$ and $\fs'$.
A cluster is \emph{twin} if its size is $2$, and \emph{cotwin} if it is non-\"ubereven and has a child of size $2g$, where $g$ is the genus of the curve $y^2 = f(x)$.
A cluster $\fs$ is \emph{principal} if $|\fs| < 2g + 2$, proper, not a twin or a cotwin, or if $|\fs| = 2g + 2$, $\fs$ has $\geq 3$ children, and is not a cotwin.
For the examples of cluster pictures, we refer \cite[\S 3]{BBB}.

For a proper cluster $\fs$,  we define
\begin{align*}
    \widetilde{\lambda}_{\fs}
    := \frac{1}{2}\lbrb{\ord_p(c) + \#\lcrc{\text{odd child of }\fs}\cdot d_{\fs} + \sum_{r \not\in \fs}d_{\lcrc{r}\wedge \fs} }.
\end{align*}
For a cluster $\fs$, we denote $I_{\fs}$ as the stabilizer of $\fs$ under $I_{\bQ_p}$, and following \cite[Notation 12.2]{BBB},
\begin{align*}
    \xi_{\fs}(a) := \max \lcrc{-\ord_2([I_{\bQ_p} : I_{\fs}]a), 0}.
\end{align*}
We further define
\begin{align*}
    U &:= \lcrc{\textrm{odd clusters } \fs \neq \cR : \xi_{P(\fs)}(\tilde{\lambda}_{P(\fs)}) \leq \xi_{P(\fs)}(d_{P(\fs)}) }, \\
    V &:= \lcrc{\textrm{proper non-\"ubereven clusters } \fs : \xi_{\fs}(\tilde{\lambda}_{\fs}) = 0 }.
\end{align*}

\begin{theorem} \label{thm: BBB12.3}
Let $n_{\tame}$ and $n_{\wild}$ be the tame and wild parts of the conductor exponent of $y^2 = f(x)$ over $\bQ_p$.
Then,
\begin{align*}
    n_{\mathrm{tame}} &= 2g - \#(U/I_{\bQ_p}) + \#(V/I_{\bQ_p}) + \left\{
    \begin{array}{ll}
    1 & \textrm{if $|\cR|$ and $\ord_p(c)$ are both even, } \\
    0 & \textrm{otherwise;} 
    \end{array}
    \right. \\
    n_{\mathrm{wild}} &= \sum_{r \in \cR/G_{\bQ_p}}\lbrb{\ord_p(\Delta(\bQ_p(r)/\bQ_p)) - [\bQ_p(r):\bQ_p] + f(\bQ_p(r)/\bQ_p) }  
\end{align*}    
\end{theorem}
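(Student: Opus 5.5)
This theorem is a quotation of a general result from the user's guide \cite{BBB} (Theorem 12.3 there), so the plan is not to reprove it from scratch. Instead I would reduce it to the structure theory of semistable models of hyperelliptic curves in \cite{DDMM}, as \cite{BBB} does. I would first use (\ref{eqn: cond exp sum}). The conductor exponent of $\Jac(C)$ splits as $n_{\tame}+n_{\wild}$, where $n_{\tame} = 2g - \dim H^1_{\acute et}(C_{\overline{\bQ_p}},\bQ_\ell)^{I_{\bQ_p}}$ and $n_{\wild}$ is the Swan conductor. I would then treat the two parts separately.

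For the wild part, the Galois representation on $H^1$ of $y^2=f(x)$ is, up to an unramified-twist-independent correction, described as a virtual combination of permutation modules on the roots $\cR$. Concretely, $H^1\oplus \mathbf{1}\oplus(\text{quadratic character})$ relates to $\bQ_\ell[\cR]$ twisted appropriately, and the twist by the sign character does not affect the Swan conductor for $p$ odd in the relevant range. The Swan conductor is additive, so it reduces to the Swan conductors of the permutation modules $\mathrm{Ind}_{\bQ_p(r)}^{\bQ_p}\mathbf{1}$ over the orbits $r\in \cR/G_{\bQ_p}$. The conductor--discriminant formula then gives $\ord_p\Delta(\bQ_p(r)/\bQ_p)$ as the Artin conductor of this induced module. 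Subtracting the tame part, which is $[\bQ_p(r):\bQ_p]-f(\bQ_p(r)/\bQ_p)$ (dimension minus inertia invariants), yields the stated formula for $n_{\wild}$.

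For the tame part, I would invoke the description in \cite{DDMM} of $H^1$ as a $G_{\bQ_p}$-module in terms of the cluster picture. It decomposes into a toric part and an abelian part. The abelian part comes from the principal clusters via the curves $\Gamma_\fs$. The toric part comes from the permutation action on the even non-übereven clusters, twisted by characters $\epsilon_\fs$. The inertia invariants are then counted orbit by orbit. For each $I_{\bQ_p}$-orbit of clusters, the stabilizer $I_\fs$ acts through a tame cyclic quotient. Whether a given piece contributes invariants is governed by the $2$-adic valuations of $[I_{\bQ_p}:I_\fs]\tilde\lambda_\fs$ and $[I_{\bQ_p}:I_\fs]d_\fs$. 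This is exactly what $\xi_\fs$ records, and it produces the sets $U$ and $V$. The additional $+1$ when $|\cR|$ and $\ord_p(c)$ are both even comes from the top cluster $\cR$ contributing an extra toric/unramified-character term.

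The main obstacle is this bookkeeping. One has to match the inertia-invariant dimension from \cite{DDMM}, namely genus contributions of the $\Gamma_\fs$ quotients and ranks of the toric lattices, with $2g-\#(U/I)+\#(V/I)$. Here I would follow \cite[\S 12]{BBB} directly and simply cite it, since in the paper this is used as a black box for computing conductor exponents at $p>3$.
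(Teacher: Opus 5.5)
Your proposal matches the paper: the paper's proof is simply the citation of \cite[Theorem 12.3]{BBB}, and your sketch via \cite{DDMM} is only motivation for that cited result, so citing it, as you conclude, is exactly what is done. One caution on the sketch itself: the wild-part heuristic is most cleanly made precise through $J[2]$ (with $J$ the Jacobian) rather than by comparing $H^1$ with a twisted $\bQ_\ell[\cR]$. For $p$ odd, $J[2]$ determines the Swan conductor, and it is built from the permutation module $\bF_2[\cR]$.
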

\begin{proof}
This is \cite[Theorem 12.3]{BBB}.
\end{proof}

From now on, we will compute the cluster pictures of $C_d$.

\begin{lemma} \label{lem: cluster p|d}
Let $p$ be a prime $\geq 5$ such that $\ord_p(d) > 0$.
Then the cluster picture of $f_d$ over $\mathbb{Q}_p$ is $\{\bullet,\cdots,\bullet\}_{\frac{\ord_p(d)}{2}}$.
\end{lemma}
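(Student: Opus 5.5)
We compute the valuations of all pairwise differences of the six roots of $f_d$ over $\overline{\bQ_p}$. The tool is the factorization in Lemma \ref{lem: quad split of fd}: the roots are the roots of the three quadratics $x^2 + ax - \frac{d}{3}$, $x^2+bx-\frac d3$, $x^2+cx-\frac d3$, where $a,b,c$ are the roots of $g(x)=x^3+\frac{12d}{d-3}x^2-4dx-\frac{16d^2}{3(d-3)}$. Since $p\ge5$, the quantities $d-3$ and $d+3$ are $p$-adic units when $p\mid d$. The leading constant $\frac{27(d-3)}{d+3}$ is therefore a unit and plays no role in the depths. Write $n=\ord_p(d)$; since $d$ is square-free, $n=1$, although the argument works for general $n$.

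\textbf{Step 1: valuations of $a,b,c$ and their differences.} Substitute $x=d^{1/2}t$ into $g$. The terms scale as $d^{3/2}t^3$, $d\,t^2$, $d^{3/2}t$ and $d^2$. Rescaling more carefully, the Newton polygon of $g$ gives $\ord_p$ of each root equal to $n/2$: the coefficients have valuations $0$, $n$, $n$, $2n$ in degrees $3,2,1,0$, and the lower convex hull is the single segment of slope $-n/2$, because the middle points $(2,n)$ and $(1,n)$ lie on or above the line through $(3,0)$ and $(0,2n)$. Writing $a=\sqrt{d}\,a'$ and so on, the $a'$ are roots modulo $p$ of $t^3-4t+O(\sqrt d)$, to leading order $t(t^2-4)$. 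So $a',b',c'$ reduce to the distinct values $0$ and $\pm2$, and $\ord_p(a-b)=n/2$ for all pairs. This matches the discriminant $2^8d^3(d+3)^4/(d-3)^4$, whose valuation is $3n$.

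\textbf{Step 2: roots of each quadratic.} The roots of $x^2+ax-\frac d3$ are $\sqrt d\cdot\tau$, where $\tau$ solves $\tau^2+a'\tau-\frac13=0$. Modulo $p$, the three quadratics $\tau^2+a'\tau-\frac13$ with $a'\equiv 0,\pm2$ have discriminants $\frac43$ and $4+\frac43=\frac{16}{3}$. These are nonzero for $p\ge5$, so each quadratic has distinct roots modulo $p$. I also need the six reduced values $\bar\tau$ to be pairwise distinct. Two different quadratics $\tau^2+a'\tau-\frac13$ and $\tau^2+b'\tau-\frac13$ share a root only if $(a'-b')\tau=0$, which would force $\tau=0$; but $\tau=0$ is not a root since $-\frac13\neq0$. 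Hence the six roots are $\sqrt d$ times six units with distinct reductions, and all pairwise differences have valuation exactly $n/2$.

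\textbf{Step 3: conclusion.} The six roots then form a single proper cluster $\cR$ of depth $n/2$ with no proper subclusters, which is the picture $\{\bullet,\cdots,\bullet\}_{\frac{\ord_p(d)}{2}}$. The main point needing care is Step 1: checking that the reduced cubic $t^3-4t$ is separable modulo $p$, and handling the half-integral valuation by passing to $\bQ_p(\sqrt d)$. Both are harmless for $p\ge5$. Everything else is an explicit check using the factorization already established.
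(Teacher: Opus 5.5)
Your overall strategy works, but the Newton polygon claim in Step~1 is false, and your Step~1 as written contradicts itself.

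For $g$ the points are $(3,0)$, $(2,n)$, $(1,n)$, $(0,2n)$. The segment from $(0,2n)$ to $(3,0)$ has slope $-\tfrac{2n}{3}$, not $-\tfrac n2$, and at abscissa $1$ it has height $\tfrac{4n}{3}>n$. So $(1,n)$ lies strictly below it. The actual lower hull is $(0,2n)$--$(1,n)$--$(3,0)$, with slopes $-n$ and $-\tfrac n2$. Hence two of $a,b,c$ have valuation $\tfrac n2$ and one has valuation $n$.

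The root of valuation $n$ is exactly the one whose $a'$ reduces to $0$, which is where the contradiction lies. It also means ``each root has valuation $n/2$'' cannot be what justifies integrality of $a'$.

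The repair is to drop the Newton polygon and use the rescaled polynomial directly:
\[
d^{-3/2}g(\sqrt d\,t)=t^3+\frac{12\sqrt d}{d-3}t^2-4t-\frac{16\sqrt d}{3(d-3)} .
\]
This is monic with coefficients in $\bZ_p[\sqrt d]$ and reduces to $t(t-2)(t+2)$. So $a',b',c'$ are integral with distinct reductions $0,\pm 2$, which is all Step~2 needs. Steps~2 and~3 then go through as you wrote them; each $\tau$ is a unit because the constant term of its quadratic is $-\tfrac13$.

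With that fix, your proof is a factored version of the paper's. The paper applies the Newton polygon to $f_d$ itself: it is the single segment from $(0,3\ord_p(d))$ to $(6,0)$, so every root has valuation $\tfrac{\ord_p(d)}{2}$. It then reduces $d^{-3}f_d(\sqrt d\,x)$ modulo the maximal ideal to $-(27x^6-135x^4+45x^2-1)$. Its discriminant is supported only at $2$ and $3$, so this reduction is separable for $p\ge5$.

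Your three reduced quadratics multiply to $x^6-5x^4+\tfrac53x^2-\tfrac1{27}$, which is the same polynomial up to the unit $-27$. So your Step~2 is a hands-on check of the separability the paper gets from one discriminant computation. The core idea, rescaling by $\sqrt d$ and checking that the reduction is separable, is the same in both. Your route avoids computing a sextic discriminant but has to pass through the auxiliary cubic and Lemma~\ref{lem: quad split of fd}; the paper's route is shorter.
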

\begin{proof}  
Let $r_1,\cdots,r_6\in\overline{\mathbb{Q}_p}$ be the roots of $f_d$.
Considering the $p$-valuation of the coefficients of $f_d$,
the Newton polygon of $f_d$ over $\mathbb{Q}_p$ is the line segment connecting $(0,3\ord_p(d))$ and $(6,0)$. Consequently, by \cite[Proposition II.6.3]{Neu99}, we have $\ord_p(r_i)=\frac{1}{2}\ord_p(d)$. Moreover, we observe the following.
\begin{itemize}
    \item If $\ord_p(d)$ is even, then $\mathbb{Q}_p(\sqrt{d})/\mathbb{Q}_p$ is unramified.
    \item If $\ord_p(d)$ is odd, then $\mathbb{Q}_p(\sqrt{d})/\mathbb{Q}_p$ is ramified with uniformizer $\sqrt{d}p^{-\frac{\ord_p(d)-1}{2}}$.
\end{itemize}
In either case, denote $\mathfrak{m}_{\mathbb{Q}_p[\sqrt{d}]}\subseteq\mathbb{Z}_p[\sqrt{d}]$ the maximal ideal. Then
\begin{align*}
    \frac{1}{d^3}f_d(\sqrt{d}x)&=\frac{27(d-3)}{d+3} \left(x^6-\frac{12\sqrt{d}}{d-3}x^5-5x^4+\frac{40\sqrt{d}}{3(d-3)}x^3+\frac{5}{3}x^2-\frac{4\sqrt{d}}{3(d-3)}x-\frac{1}{27}\right)\\
    &\equiv-(27x^6-135x^4+45x^2-1)\pmod{\mathfrak{m}_{\mathbb{Q}_p[\sqrt{d}]}}.
\end{align*}
Since $p\geq5$, we have
\begin{align*}
    \disc(27x^6-135x^4+45x^2-1)=2^{10}3^9\neq0
\end{align*}
in the residue field $\mathbb{F}_p$ of $\mathbb{Z}_p[\sqrt{d}]$. Hence
\begin{align*}
    \ord_p\left(\frac{r_i}{\sqrt{d}}-\frac{r_j}{\sqrt{d}}\right)=0
\end{align*}
for distinct $i,j\in\{1,\cdots,6\}$.
\end{proof}

\begin{lemma} \label{lem: inertia action p|d}
Let $p$ be a prime $\geq 5$ with $\ord_p(d) > 0$, and let $\cR = \lcrc{r_i}_{i=1}^6$ be the roots of $f_d$ over $\bQ_p$.
Then, $I_{\bQ_p}$ acts on $\cR$ via the quotient $I_{\mathbb{Q}_p}\rightarrow\mathrm{Gal}(\mathbb{Q}_p(\sqrt{d})/\mathbb{Q}_p)$.
If $\ord_p(d)$ is odd, then none of the $r_i$ is fixed by the $I_{\mathbb{Q}_p}$-action.
\end{lemma}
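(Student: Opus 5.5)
We will prove both claims by combining the splitting-field description $F_d=\bQ(\alpha_d,\sqrt{d},\sqrt{3})$ from Lemma \ref{lem: fd splitting field} with the tameness for $p\geq 5$ from Lemma \ref{lem: tame p>3}.

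\textbf{Step 1: the inertia image.} The action of $I_{\bQ_p}$ on $\cR$ factors through the inertia subgroup of $\Gal(F_d\bQ_p/\bQ_p)$. Since $p\geq 5$, the extension $\bQ_p(\sqrt{3})/\bQ_p$ is unramified. It therefore suffices to show that $\bQ_p(\alpha_d,\sqrt{d})$ is unramified over $\bQ_p(\sqrt{d})$.

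For this, we use the cubic \eqref{eqn: defining alpha}. We have $\frac{d-3}{d+3}\equiv -1 \pmod p$ because $p\mid d$. Hence modulo $p$ the cubic reduces to $x^3-\frac34x+\frac14=(x+1)(x-\frac12)^2$, which has a repeated root. So this route does not work directly.

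We switch to the cubic $x^3+\frac{12d}{d-3}x^2-4dx-\frac{16d^2}{3(d-3)}$ of Lemma \ref{lem: quad split of fd}, whose roots $a,b,c$ generate $\bQ(\alpha_d,\sqrt{d})$ over $\bQ(\sqrt d)$. Substitute $x=\sqrt{d}\,t$ and divide by $d^{3/2}$. This gives
\[
t^3+\frac{12\sqrt d}{d-3}t^2-4t-\frac{16\sqrt d}{3(d-3)},
\]
which reduces modulo $\mathfrak m_{\bQ_p(\sqrt d)}$ to $t^3-4t=t(t-2)(t+2)$. Since $p\ge5$, this is separable. By Hensel's lemma, $a/\sqrt d$, $b/\sqrt d$, $c/\sqrt d$ lie in $\bQ_p(\sqrt d)$ up to an unramified extension. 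Thus $F_d\bQ_p/\bQ_p(\sqrt d)$ is unramified, and $I_{\bQ_p}$ acts on $\cR$ through $\Gal(\bQ_p(\sqrt d)/\bQ_p)$, as claimed.

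\textbf{Step 2: no fixed roots when $\ord_p(d)$ is odd.} Now assume $\ord_p(d)$ is odd, so $\bQ_p(\sqrt d)/\bQ_p$ is ramified. Then inertia acts through the nontrivial element $\tau\colon\sqrt d\mapsto-\sqrt d$.

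By Lemma \ref{lem: quad split of fd}(1), each root $r$ of $f_d$ satisfies $r^2+ar-\frac d3=0$ for one of the roots $a\in\{a,b,c\}$. Write $r=\sqrt d\,\rho$. Then $\rho^2+(a/\sqrt d)\rho-\frac13=0$, which reduces to one of
\[
\rho^2-\tfrac13=0,\qquad \rho^2\pm2\rho-\tfrac13=0 .
\]
From Step 1, the quantities $a/\sqrt d$ lie in an unramified extension of $\bQ_p(\sqrt d)$. Consequently each $\rho$ lies in an extension unramified over $\bQ_p(\sqrt d)$, with nonzero reduction. This uses Lemma \ref{lem: cluster p|d}, which gives $\ord_p(\rho)=0$ and distinct reductions.

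We make the action of $\tau$ on $\rho$ explicit. Since $\tau(\sqrt d)=-\sqrt d$, we get $\tau(a/\sqrt d)=-\tau(a)/\sqrt d$. Also $\tau$ fixes the residue field, because the extension is totally ramified and $\tau$ is an inertia element. So modulo $\mathfrak m$ we have $\tau(\rho)\equiv\rho$, while $\tau(r)=-\sqrt d\,\tau(\rho)$.

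Suppose $\tau(r)=r$. Then $-\sqrt d\,\tau(\rho)=\sqrt d\,\rho$, so $\tau(\rho)=-\rho$. Reducing gives $\bar\rho=-\bar\rho$, hence $\bar\rho=0$ since $p\neq2$. This contradicts $\ord_p(\rho)=0$. Therefore no root is fixed by inertia.

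\textbf{Main obstacle.} The delicate point is Step 1: choosing a cubic whose reduction after rescaling by $\sqrt d$ is separable. The naive cubic \eqref{eqn: defining alpha} fails at this, as shown above. Once the correct normalization is in place, the argument is a routine Hensel argument. Step 2 is then immediate from the residue-field argument.
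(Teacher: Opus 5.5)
Your proof is correct, but both steps take a different route from the paper.

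\textbf{Step 1.} The paper stays with the cubic $g_d$. Its reduction $(x+1)(x-\tfrac12)^2$ has the simple root $-1$ (simple because $p\neq 3$). Hensel's lemma therefore puts a conjugate of $\alpha_d$ in $\bQ_p$. The other two roots then lie in $\bQ_p(\sqrt{\disc g_d})=\bQ_p(\sqrt d)$, so $F_d\bQ_p=\bQ_p(\sqrt d,\sqrt3)$. Your remark that this route ``does not work directly'' is mistaken: Hensel's lemma needs only a simple root, not a separable reduction.

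Your replacement is also valid. Rescaling the cubic with roots $a,b,c$ by $\sqrt d$ gives the separable reduction $t^3-4t$. Since that reduction splits into distinct linear factors, it places $a,b,c$ in $\bQ_p(\sqrt d)$ itself, not merely up to an unramified extension. It has the side benefit of exhibiting every root as $\sqrt d$ times a unit.

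\textbf{Step 2.} The paper argues combinatorially. A lift $\sigma_d$ of the nontrivial element preserves the quadratic factor with coefficient $a$ and swaps the other two. The factorization into two cubics over $\bQ_p(\sqrt{3d})$ is then used to show that $\sigma_d$ has three orbits of size $2$. This implicitly uses that an inertia element fixes $\sqrt3$, hence negates $\sqrt{3d}$ and swaps the two cubic factors.

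Your residue-field argument is more robust. It uses only $\ord_p(r)=\tfrac12\ord_p(d)$ from the Newton polygon. In effect it says that $r\notin\bQ_p^{\mathrm{ur}}=\overline{\bQ_p}^{I_{\bQ_p}}$, because its valuation is not an integer. In particular it does not need Step 1 at all. Combined with Step 1, it gives exactly what the paper needs later: three $I_{\bQ_p}$-orbits of size $2$.

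One small imprecision: the justification ``because the extension is totally ramified'' is unnecessary and not accurate, since $\bQ_p(\sqrt d,\sqrt3)$ need not be totally ramified over $\bQ_p$. The congruence $\tau(\rho)\equiv\rho$ holds simply because $\tau\in I_{\bQ_p}$ acts trivially on the residue field of $\overline{\bQ_p}$.
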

\begin{proof}
By Lemma \ref{lem: fd splitting field}, the splitting field of $f_d$ over $\bQ$ is $\bQ(\alpha_d, \sqrt{d}, \sqrt{3})$, where the minimal polynomial of $\alpha_d$ is $x^3 - \frac{3}{4}x - \frac{d-3}{4(d+3)}$.
Since $d \equiv 0 \pmod{p}$, we have
\begin{align*}
    x^3 - \frac{3}{4}x - \frac{d-3}{4(d+3)}\equiv x^3-\frac{3}{4}x+\frac{1}{4}=(x+1)\left(x-\frac{1}{2}\right)^2 \pmod{p}.
\end{align*}
Replacing $\alpha_d$ by a Galois conjugate if necessary, Hensel's lemma yields a factorization over $\mathbb{Q}_p$:
\begin{align*}
    x^3-\frac{3}{4}x-\frac{d-3}{4(d+3)}=(x-\alpha_d)\left(x^2+\alpha_d x+\alpha_d^2-\frac{3}{4}\right).
\end{align*}
It follows that $\alpha_d \in \bQ_p$, and the splitting field of $f_d$ over $\bQ_p$ is $\bQ_p(\sqrt{d}, \sqrt{3})$.
Since $p \nmid 3$, the inertia group $I_{\bQ_p}$ action factors through the quotient $I_{\mathbb{Q}_p}\rightarrow\Gal(\bQ_p(\sqrt{d})/\bQ_p)$.

The proof of Lemma \ref{lem: quad split of fd} also works for $\bQ_p$, so it gives a factorization over $\bQ_p(\alpha_d, \sqrt{d}) = \bQ_p(\sqrt{d})$ and $\bQ_{p}(\alpha_d) = \bQ_p$.
Without loss of generality, let $r_1 + r_2 = -a$, $r_3 + r_4 = -b$, and $r_5 + r_6 = -c$, and let $\sigma_d$ be a lifting of non-trivial element in $\Gal(\bQ_p(\sqrt{d})/\bQ_p)$.
Then the factorization of Lemma \ref{lem: quad split of fd} shows that 
$\sigma_d$ defines an automorphism on $\{r_1,r_2\}$ and a bijection between $\{r_3,r_4\}$ and $\{r_5,r_6\}$.
We may assume that $\sigma_d(r_3)=r_6$ and $\sigma_d(r_4)=r_5$.

Moreover, the factorization (\ref{eqn: fd two cubic factor}) which works over $\bQ_p(\sqrt{3d})$ shows that $r_3$ and $r_5$ (resp. $r_4$ and $r_6$) are in the same cubic factor. This shows that $r_1$ and $r_2$ are not in the same factor so that $\sigma_d(r_1) = r_2$. Therefore, $\sigma_d$ gives $3$ orbits of $\cR$ of size $2$.
\end{proof}

\begin{proposition} \label{prop: cond exp p|d}
Let $d$ be a square-free integer and let $p \geq 5$ be a prime such that $p \mid d$.
Then, the conductor exponent of $C_d$ at $p$ is $2$.    
\end{proposition}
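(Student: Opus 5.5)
The plan is to apply Theorem \ref{thm: BBB12.3} to the model $y^2=f_d(x)$ over $\bQ_p$, using the cluster picture from Lemma \ref{lem: cluster p|d} and the inertia action from Lemma \ref{lem: inertia action p|d}. Since $d$ is square-free and $p\mid d$, we have $\ord_p(d)=1$. By Lemma \ref{lem: tame p>3}, $J_d$ has tame reduction at $p\geq 5$. So the splitting field of $f_d$ is tamely ramified over $\bQ_p$, and $n_{\wild}=0$. Indeed, each $\bQ_p(r)/\bQ_p$ is tame, so the different exponent equals $e-1$, and the summand $\ord_p(\Delta)-[\bQ_p(r):\bQ_p]+f$ vanishes. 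It therefore remains to compute $n_{\tame}$ with $g=2$.

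\textbf{Local data.} By Lemma \ref{lem: cluster p|d}, the only proper cluster is $\cR$ itself. It has depth $d_{\cR}=\frac12$, and its six children are singletons. The leading coefficient $c=\frac{27(d-3)}{d+3}$ is a $p$-adic unit, because $p\geq5$ and $p\mid d$ give $p\nmid 27(d-3)(d+3)$. Hence $\ord_p(c)=0$, and the sum over roots outside $\cR$ is empty. This gives
\begin{align*}
    \widetilde{\lambda}_{\cR}=\tfrac12\lbrb{0+6\cdot\tfrac12}=\tfrac32 .
\end{align*}
Since $\cR$ is $I_{\bQ_p}$-stable, $[I_{\bQ_p}:I_{\cR}]=1$. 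Therefore $\xi_{\cR}(\widetilde{\lambda}_{\cR})=\max\{-\ord_2(3/2),0\}=1$ and $\xi_{\cR}(d_{\cR})=\max\{-\ord_2(1/2),0\}=1$.

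\textbf{The sets $U$ and $V$.} The odd clusters other than $\cR$ are the six singletons, and each has parent $\cR$. The defining inequality $1\leq 1$ holds, so all six singletons lie in $U$. By Lemma \ref{lem: inertia action p|d}, with $\ord_p(d)$ odd, inertia acts through $\Gal(\bQ_p(\sqrt d)/\bQ_p)$ and fixes no root. Hence the roots fall into exactly three orbits of size $2$, and $\#(U/I_{\bQ_p})=3$. The only proper cluster is $\cR$, and it is not übereven because its children are odd. Since $\xi_{\cR}(\widetilde\lambda_{\cR})=1\neq0$, the set $V$ is empty. Finally, $|\cR|=6$ and $\ord_p(c)=0$ are both even, so the correction term equals $1$.

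Putting these together, $n_{\tame}=4-3+0+1=2$. With $n_{\wild}=0$, the conductor exponent is $2$. The only delicate step is the orbit count for $U$, and that is exactly what Lemma \ref{lem: inertia action p|d} supplies. The remaining checks are the $2$-adic valuations in $\xi$ and the unit property of $c$.
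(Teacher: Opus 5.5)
Your proposal is correct and follows the paper's proof essentially step by step. You use the same data: $d_{\cR}=\frac12$, $\widetilde{\lambda}_{\cR}=\frac32$, $U$ equal to the six singletons falling into three inertia orbits by Lemma \ref{lem: inertia action p|d}, and $V=\emptyset$, which give $n_{\mathrm{tame}}=4-3+0+1=2$. Your explicit checks that $c$ is a $p$-adic unit and that $n_{\mathrm{wild}}=0$ by tameness only spell out what the paper leaves implicit.
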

\begin{proof}
We first recall that it has tame reduction by Lemma \ref{lem: tame p>3}.
From the definition and Lemma \ref{lem: cluster p|d},
\begin{align*}
    d_{\cR} = \frac{\ord_p(d)}{2}, \qquad 
    \widetilde{\lambda}_{\cR} = \frac{1}{2} \lbrb{\ord_p\lbrb{\frac{27(d-3)}{d+3}} + 6d_{\cR} + 0 } = \frac{3}{2} \ord_p(d).
\end{align*}
Since $I_{\cR} = I_{\bQ_p}$ and $\xi_{\cR}(a) = \max \lcrc{-\ord_2([I_{\bQ_p} : I_{\cR}]a), 0}$,
\begin{align*}
    U &= \lcrc{\fs \neq \cR \text{ odd} : \xi_{\cR} \lbrb{ \frac{3\ord_p(d) }{2}  }  \leq \xi_{\cR} \lbrb{\frac{\ord_p(d)}{2}}  } = \lcrc{ \lcrc{r} : r \in \cR }.
\end{align*}
By Lemma \ref{lem: inertia action p|d}, we have $|U/I_{\bQ_p}| = 3$.

For $V$, the only proper cluster is $\cR$ itself, which is not \"ubereven.
Since $\ord_p(d) = 1$, we have $\xi_{\cR}(\widetilde{\lambda}_{\cR}) = 1$ and hence $V$ is the empty set.
From
\begin{align*}
    |\mathcal{R}|=6,\quad\ord_p\left(\frac{27(d-3)}{d+3}\right)=0
\end{align*}
and Theorem \ref{thm: BBB12.3}, we get $ n_\mathrm{tame}=4-3+0+1=2.$
\end{proof}

\begin{lemma} \label{lem: cluster picture p|d+3}
Let $p \geq 5$ be a prime such that $\ord_p(d+3) > 0$.
Then the cluster picture of $f_d$ over $\bQ_p$ is $\lcrc{\bullet, \cdots, \bullet}_{\frac{\ord_p(d+3)}{3}}$.   
\end{lemma}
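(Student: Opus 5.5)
The plan is to avoid the Newton polygon of $f_d$ itself and instead read the roots off the explicit isomorphism $\Phi$ constructed in the proof of Proposition \ref{prop: L bound}. Put $e := \ord_p(d+3)>0$ and $s := \sqrt{-d/3}$. Since $p \geq 5$ and $d \equiv -3 \pmod p$, we have $-d/3 \equiv 1 \pmod p$, so $s$ is a $p$-adic unit. The quantities $d-3$, $d$ and $\sqrt{-3}$ are also units.

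First I would recall from that proof that $\Phi(z) = w\,\frac{z - s}{z+s}$ maps the set $\cR$ of roots of $f_d$ bijectively onto the six roots $\zeta$ of $x^6+1$. Here $w$ satisfies
\begin{align*}
    w^6 = \left(\frac{\sqrt{d}-\sqrt{-3}}{\sqrt{d}+\sqrt{-3}}\right)^2 .
\end{align*}
Inverting $\Phi$ gives the explicit description
\begin{align*}
    \cR = \lcrc{ r_\zeta := s\,\frac{w+\zeta}{w-\zeta} \ : \ \zeta^6 = -1 }.
\end{align*}

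Next I would compute $\ord_p(w)$. We have $\sqrt{d}-\sqrt{-3} = \frac{d+3}{\sqrt{d}+\sqrt{-3}}$. Moreover $\sqrt{d}+\sqrt{-3} \equiv 2\sqrt{-3} \pmod{\mathfrak m}$, because $\sqrt{d} \equiv \pm\sqrt{-3}$ and we may choose the sign of $\sqrt{d}$ accordingly; with that choice it is a unit. Hence $\ord_p(w^6) = 2e$, so $\ord_p(w) = e/3 > 0$.

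Then I would compute the pairwise differences:
\begin{align*}
    r_\zeta - r_{\zeta'} = s\left(\frac{w+\zeta}{w-\zeta} - \frac{w+\zeta'}{w-\zeta'}\right) = \frac{2s\,w(\zeta'-\zeta)}{(w-\zeta)(w-\zeta')} .
\end{align*}
Since $\ord_p(w)>0$ and $\zeta,\zeta'$ are units, both $w-\zeta$ and $w-\zeta'$ are units. For $p \geq 5$, the difference $\zeta-\zeta'$ of two distinct $12$-th roots of unity is a unit, because $12$ is invertible mod $p$. Also $2s$ is a unit. Therefore $\ord_p(r_\zeta - r_{\zeta'}) = e/3$ for all $\zeta \neq \zeta'$. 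This shows that the only proper cluster is $\cR$, that its depth is $e/3$, and that its children are the six singletons, which is exactly the cluster picture $\{\bullet,\cdots,\bullet\}_{\frac{\ord_p(d+3)}{3}}$.

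The main obstacle is justifying the sign choice for $\sqrt{d}$ so that the denominator $\sqrt{d}+\sqrt{-3}$ is a unit. I would handle it by noting that $\Phi$ is only determined up to this choice (equivalently $w \mapsto w^{-1}$, which composes with $z \mapsto 1/z$ on $C_0$). If the other sign gives $\ord_p(w) = -e/3$, the same computation applied to $w^{-1}$ yields identical valuations. As a sanity check, one can verify directly that $\frac{d+3}{27(d-3)}f_d(x) \equiv (x-1)^6 \pmod p$ when $d \equiv -3$. This is consistent with all roots being congruent to $-s \equiv 1$, and since each $r_\zeta \to -s$ as $w \to 0$, the sign of $s$ matches.
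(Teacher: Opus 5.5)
Your proof is correct, and it takes a different route from the paper's.

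\textbf{The paper's route.} It works directly with the polynomial. The Newton polygon of $\frac{d+3}{27(d-3)}f_d(x+1)$ is a single segment of slope $-\ord_p(d+3)/3$, so $\ord_p(r_i-1)=\ord_p(d+3)/3$ for every root. After the substitution $x\mapsto\sqrt[3]{d+3}\,x+1$, the reduction becomes $x^6+\frac{4}{9}$, which is separable for $p>3$. Hence every pairwise difference has valuation exactly $\ord_p(d+3)/3$.

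\textbf{Your route.} You import the identity
\[
\frac{d+3}{27(d-3)}f_d(x)=\frac{1}{1+w^6}\bigl(w^6(x-s)^6+(x+s)^6\bigr)
\]
from the proof of Proposition \ref{prop: L bound}. This exhibits $\cR$ as the M\"obius image of the six roots of $x^6+1$. Everything then reduces to two facts:
\begin{itemize}
\item $\ord_p(w)=\pm\ord_p(d+3)/3$, read off from $(\sqrt d-\sqrt{-3})(\sqrt d+\sqrt{-3})=d+3$;
\item $x^{12}-1$ is separable modulo $p$.
\end{itemize}

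\textbf{What each approach buys.} The paper's argument is self-contained and mechanical, depending only on the explicit coefficients of $f_d$. Yours gives the roots explicitly, explains conceptually why $\cR$ forms a single ball, and handles $p\mid d$ with the same parametrization. In that case $w^6\equiv 1$ modulo the maximal ideal, so $w-\zeta$ is a unit. All differences then have valuation $\ord_p(s)=\frac12\ord_p(d)$, which recovers Lemma \ref{lem: cluster p|d}. The cost is that you depend on the sign conventions of that earlier proof.

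\textbf{The sign issue.} The coefficient comparison there forces $\sqrt d\,\sqrt{-3}=3s$, so once $s$ is fixed you cannot choose the sign of $\sqrt d$ freely. However, the ``main obstacle'' you flag does not exist. Your displayed formula gives
\[
\ord_p(r_\zeta-r_{\zeta'})=\ord_p(w)-\ord_p(w-\zeta)-\ord_p(w-\zeta')=|\ord_p(w)|
\]
whichever sign $\ord_p(w)$ has. So no sign choice, and no appeal to $w\mapsto w^{-1}$, is needed.
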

\begin{proof}
Since $d\equiv-3\bmod p$, we have
\begin{align*}
\frac{d+3}{27(d-3)}f_d(x)\equiv(x-1)^6\pmod{p}.
\end{align*}
The Newton polygon of
\begin{align*}
\frac{d+3}{27(d-3)}f_d(x+1)&=x^6-\frac{6(d+3)}{d-3}x^5-\frac{5(d+3)^2}{d-3}x^4-\frac{20(d+3)^2}{3(d-3)}x^3+\frac{5}{3}(d+3)^2x^2\\
&\quad\quad+\frac{2(d-1)(d+3)^2}{d-3}x-\frac{(d+3)^2(d^2-18d+9)}{27(d-3)}
\end{align*}
over $\mathbb{Q}_p$ is the line segment connecting $(0,2\ord_p(d+3))$ and $(6,0)$. Hence
\begin{align*}
\ord_p(r_i-1)=\frac{\ord_p(d+3)}{3}.
\end{align*}
On the other hand,
\begin{align*}
\frac{1}{27(d-3)(d+3)}f_d\left(\sqrt[3]{d+3}x+1\right)\equiv x^6+\frac{4}{9}\pmod{\mathfrak{m}_{\mathbb{Q}_p(\sqrt[3]{d+3})}}
\end{align*}
which has discriminant $2^{16}3^{-4}$. Consequently, if $p>3$, then
\begin{align*}
\ord_p(r_i-r_j)&=\ord_p((r_i-1)-(r_j-1))\\
&=\ord_p\left(\frac{r_i-1}{\sqrt[3]{d+3}}-\frac{r_j-1}{\sqrt[3]{d+3}}\right)+\frac{\ord_p(d+3)}{3}=\frac{\ord_p(d+3)}{3}.
\end{align*}
So the assertion follows.
\end{proof}

\begin{proposition}  \label{prop: cond exp p|d+3}
    Let $d$ be a square-free integer and let $p\geq5$ be a prime such that $p\mid(d+3)$.
    \begin{enumerate}
        \item If $\ord_p(d+3)$ is odd, then the conductor exponent of $C_d$ is $4$.
        \item If $\ord_p(d+3)$ is even and $3\nmid\ord_p(d+3)$, then the conductor exponent of $C_d$ is $4$.
        \item If $6\mid\ord_p(d+3)$, then the conductor exponent of $C_d$ is $0$.
    \end{enumerate}
\end{proposition}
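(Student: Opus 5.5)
The plan is to apply Theorem \ref{thm: BBB12.3} directly, using the cluster picture from Lemma \ref{lem: cluster picture p|d+3}. Write $e=\ord_p(d+3)\ge 1$.

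First, since $p\ge 5$, Lemma \ref{lem: tame p>3} gives $n_{\wild}=0$, so only $n_{\tame}$ has to be computed. By Lemma \ref{lem: cluster picture p|d+3}, the only proper cluster is $\cR$, with depth $d_{\cR}=e/3$. All six roots are odd children of $\cR$, and there are no roots outside $\cR$. The leading coefficient is $c=27(d-3)/(d+3)$. Since $d\equiv -3 \pmod p$ and $p\ge5$, we have $p\nmid 27(d-3)$, so $\ord_p(c)=-e$. Hence
\begin{align*}
\widetilde{\lambda}_{\cR}=\tfrac12\lbrb{-e+6\cdot\tfrac{e}{3}}=\tfrac{e}{2}.
\end{align*}
Because $\cR$ is stable under all of $I_{\bQ_p}$, we get $\xi_{\cR}(a)=\max\{-\ord_2(a),0\}$. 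From this:
\begin{itemize}
\item $\xi_{\cR}(e/3)=0$ for every $e$;
\item $\xi_{\cR}(e/2)=1$ if $e$ is odd, and $\xi_{\cR}(e/2)=0$ if $e$ is even.
\end{itemize}
Therefore $U$ is empty when $e$ is odd and consists of all six singletons when $e$ is even. Similarly $V=\{\cR\}$ exactly when $e$ is even ($\cR$ is not übereven), and $V=\emptyset$ otherwise. The extra term $+1$ appears exactly when $\ord_p(c)=-e$ is even.

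Case (1), $e$ odd: $n_{\tame}=4-0+0+0=4$.

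When $e$ is even, $n_{\tame}=4-\#(\cR/I_{\bQ_p})+1+1$. So the remaining task is to count the inertia orbits on $\cR$; this is the main step.

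For this I would reuse the rescaling from the proof of Lemma \ref{lem: cluster picture p|d+3}. Write $r_i=1+\sqrt[3]{d+3}\,t_i$. The $t_i$ reduce to the roots of $x^6+\frac49$, which is separable mod $p$ since $p\ge5$. By Hensel's lemma, the $t_i$ therefore lie in an unramified extension of $\bQ_p(\sqrt[3]{d+3})$. Consequently the inertia action on $\cR$ factors through the inertia of $\bQ_p(\sqrt[3]{d+3})$, which is tame of order $3$ if $3\nmid e$ and trivial if $3\mid e$.

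Case (2), $e$ even and $3\nmid e$: $\ord_p(r_i-1)=e/3\notin\bZ$, so no root is fixed by inertia. An inertia group of order $3$ then acts with two orbits of size $3$, giving $n_{\tame}=6-2=4$.

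Case (3), $6\mid e$: inertia acts trivially, so there are six orbits and $n_{\tame}=6-6=0$.

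The delicate point is justifying that the inertia action really factors through $\bQ_p(\sqrt[3]{d+3})$; in other words, that the unit parts $t_i$ introduce no further ramification. This is exactly where the separability of $x^6+\frac49$ modulo $p$ (discriminant $2^{16}3^{-4}$) is used.
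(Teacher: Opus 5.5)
Your proposal is correct. Its bookkeeping matches the paper's: $d_\cR=e/3$, $\widetilde\lambda_\cR=e/2$, the description of $U$ and $V$, and the treatment of case (1). The routes diverge only at the step of counting inertia orbits on $\cR$ when $e$ is even.

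\textbf{The paper's route.} The paper argues through the global splitting field $F_d=\bQ(\alpha_d,\sqrt d,\sqrt3)$ of Lemma \ref{lem: fd splitting field}.
\begin{itemize}
\item It notes that $\sqrt d$ and $\sqrt3$ are unramified at $p$, and reads off $\ord_p(\alpha_d)=-e/3$ from the Newton polygon of $g_d$.
\item For $3\nmid e$, inertia therefore acts through $\Gal(F_{d,p}/\bQ_p(\sqrt d,\sqrt3))\cong C_3$. The two orbits are exhibited as the root sets of the two cubic factors in \eqref{eqn: fd two cubic factor}.
\item For $6\mid e$, it shows that $p^n\alpha_d$ generates an unramified extension, so inertia acts trivially.
\end{itemize}

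\textbf{Your route.} You stay entirely local. You reuse the rescaling $r_i=1+\sqrt[3]{d+3}\,t_i$ from Lemma \ref{lem: cluster picture p|d+3} together with Hensel's lemma. This shows the splitting field has ramification index dividing $e(\bQ_p(\sqrt[3]{d+3})/\bQ_p)\in\{1,3\}$. You then rule out fixed points in case (2) because $\ord_p(r_i-1)=e/3$ is not an integer.

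\textbf{What each buys.} Your argument is more self-contained: it needs neither the global splitting-field lemma, nor the cubic $g_d$, nor the two-cubic factorization. The paper's argument ties the ramification to the same cubic $\alpha_d$ that cuts out $K_d$, which is the data fed into Proposition \ref{prop: list Lp}. It also identifies the two orbits explicitly.

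\textbf{A phrasing point.} $\bQ_p(\sqrt[3]{d+3})$ need not be Galois over $\bQ_p$, so ``its inertia'' is not well defined. It should be read as: the image of $I_{\bQ_p}$ acting on $\cR$ has order dividing $3$. Your no-fixed-point argument then forces this order to be exactly $3$, giving two orbits of size $3$, so nothing is lost.
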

\begin{proof} We note that $C_d$ has tame reduction at $p \geq 5$.
By Lemma \ref{lem: cluster picture p|d+3} and definitions,
\begin{align*}
    d_\mathcal{R}&=\frac{\ord_p(d+3)}{3},\\
    \widetilde{\lambda}_\mathcal{R}&= \frac{1}{2}\left(-\ord_p(d+3) + 6d_{\cR} + 0\right)=\frac{\ord_p(d+3)}{2}.
\end{align*}

Since the Newton polygon of
\begin{align*}
    x^3-\frac{3}{4}x-\frac{d-3}{4(d+3)}
\end{align*}
over $\mathbb{Q}_p$ is the lower convex hull of $(0,-\ord_p(d+3))$, $(1,0)$, and $(3,0)$, we have
\begin{align*}
    \ord_p(\alpha_d)=-\frac{\ord_p(d+3)}{3}.
\end{align*}

Since $I_\mathcal{R}=I_{\mathbb{Q}_p}$, we have
\begin{align*}
    \xi_\mathcal{R}(d_\mathcal{R})=\max\left\{-\ord_2\left(\frac{\ord_p(d+3)}{3} \right), 0\right\}=0,
\end{align*}
\begin{align*}
    \xi_{\cR}(\widetilde{\lambda}_{\cR}) = \max \lcrc{-\ord_2\left(\frac{\ord_p(d+3)}{2} \right), 0} =  \left\{
    \begin{array}{ll}
    1     & \textrm{if $\ord_p(d+3)$ is odd,}  \\
    0     & \textrm{if $\ord_p(d+3) >0 $ is even.}  
    \end{array}
    \right.
\end{align*}

If $\ord_p(d+3)$ is odd, then $U$ and $V$ are empty. By Theorem \ref{thm: BBB12.3}, the conductor exponent is $4$.

Assume now that $\ord_p(d+3)$ is even. In this case, $U=\{\{r\}\ |\ r\in\mathcal{R}\}$ is the set of all singletons and $V=\{\mathcal{R}\}$. To apply Theorem \ref{thm: BBB12.3}, we have to compute the $I_{\mathbb{Q}_p}$-action on $U$. 

If $3\nmid\ord_p(d+3)$, then $\mathbb{Q}_p(\alpha_d)/\mathbb{Q}_p$ is totally ramified. It follows that $I_{\mathbb{Q}_p}$ acts on $U$ via
\begin{align*}
    \xymatrix{I_{\mathbb{Q}_p}=I_{\mathbb{Q}_p(\sqrt{d},\sqrt{3})} \ar[r] & \mathrm{Gal}(F_{d,p}/\mathbb{Q}_p(\sqrt{d},\sqrt{3})) \ar[r]^-\sim & C_3}
\end{align*}
where $F_{d,p}$ is the splitting field of $f_d$ over $\mathbb{Q}_p$. Now the factorization \eqref{eqn: fd two cubic factor} shows that $C_3$ acts transitively on each cubic factor, which implies that there are two $I_{\mathbb{Q}_p}$-orbits of $U$, each of size $3$. Consequently, $|U/I_{\mathbb{Q}_p}|=2$ and hence the conductor exponent becomes $4$ by Theorem \ref{thm: BBB12.3}.\

Finally, assume that $6\mid\ord_p(d+3)$. Denote $\mathrm{ord}_p(d+3)=3n$ for a positive integer $n$ so that $\mathrm{ord}_p(\alpha_d)=-n$. We have
\begin{align*}
    \left(\frac{x}{p^n}\right)^3-\frac{3}{4}\frac{x}{p^n}-\frac{d-3}{4(d+3)}=\frac{1}{p^{3n}}\left(x^3-\frac{3p^{2n}}{4}x-\frac{p^{3n}(d-3)}{4(d+3)}\right)
\end{align*}
so the monic cubic is a minimal polynomial of $p^n\alpha_d$. Taking modulo $p$, we get a polynomial of the form $x^3-c$ with $c\not\equiv0\bmod p$. Since $p>3$, this reduction cannot have multiple roots. Hence the discriminant of the monic cubic is a $p$-adic unit. This shows that $\mathbb{Q}_p(\alpha_d)/\mathbb{Q}_p$ is unramified and hence $I_{\mathbb{Q}_p}$ acts trivially on $U$. Therefore, the conductor exponent is $n_\mathrm{tame}=4-6+1+1=0$ by Theorem \ref{thm: BBB12.3}.
\end{proof}

\begin{proposition} \label{prop: bad ap}
Let $d$ be a square-free integer.
For a prime $p \geq 5$, we have 
$a_{p^i}(J_d) \ll \sqrt{p^i}$ if $p \| d$, and 
$a_{p^i}(J_d) = 0$ if $p \mid (d+3)$ but $p^6 \nmid (d+3)$.
\end{proposition}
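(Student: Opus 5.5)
The plan is to read the local $L$-factor at a bad prime $p\ge 5$ off the $\ell$-adic representation $V_\ell(J_d)$, using the conductor computations of Propositions \ref{prop: cond exp p|d} and \ref{prop: cond exp p|d+3}. At such a prime, Lemma \ref{lem: tame p>3} says the reduction is tame, so the Swan conductor vanishes. The conductor exponent then equals $4-\dim V_\ell^{I_p}$, where $V_\ell=V_\ell(J_d)$ has dimension $4$. The local factor is
\[
L_p(J_d,s)=\det\bigl(1-\mathrm{Frob}_p\,p^{-s}\mid V_\ell^{I_p}\bigr)^{-1},
\]
and $a_{p^i}(J_d)$ is the complete homogeneous symmetric polynomial of degree $i$ in the eigenvalues of $\mathrm{Frob}_p$ on $V_\ell^{I_p}$.

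\emph{Case $p\mid(d+3)$ with $p^6\nmid(d+3)$.} The first step is to show that $\ord_p(d+3)$ is not divisible by $6$. Indeed, $1\le\ord_p(d+3)\le 5$ here. (For square-free $d$ one even has $\ord_p(d+3)$ arbitrary, but the hypothesis $p^6\nmid(d+3)$ is exactly what excludes case (3).) Propositions \ref{prop: cond exp p|d+3}(1) and (2) then give conductor exponent $4$. Hence $\dim V_\ell^{I_p}=0$, so $L_p(J_d,s)=1$, and therefore $a_{p^i}(J_d)=0$ for all $i\ge1$.

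\emph{Case $p\| d$.} Proposition \ref{prop: cond exp p|d} gives conductor exponent $2$, so $\dim V_\ell^{I_p}=2$. The local factor is then $(1-\beta_1p^{-s})^{-1}(1-\beta_2p^{-s})^{-1}$, and
\[
a_{p^i}=\sum_{j=0}^{i}\beta_1^j\beta_2^{i-j}.
\]
The key input is that $|\beta_k|\le \sqrt p$. To get this, I would argue that $J_d$ has potentially good reduction at $p$. Lemma \ref{lem: cluster p|d} shows the cluster picture is a single cluster with no proper subclusters. Equivalently, the curve over $\bQ_p(\sqrt d)$ after the substitution $x\mapsto\sqrt d x$ reduces to $y^2=-(27x^6-135x^4+45x^2-1)$, which has nonzero discriminant. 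So $C_d$ acquires good reduction over a tame extension. Then $V_\ell^{I_p}$ is a subquotient of the $H^1$ of a good-reduction curve over a finite extension, and Weil's bound gives that Frobenius eigenvalues on it have absolute value exactly $\sqrt p$. It follows that $|a_{p^i}|\le (i+1)p^{i/2}$.

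The statement ``$\ll\sqrt{p^i}$'' tolerates the factor $i+1$ if the implied constant may depend on $i$; in the application only $i=1,2$ matter. Alternatively, one could use that the inertia-invariant part, being $2$-dimensional and CM, has Frobenius factor $1-a_pT+pT^2$ for $E_0$ up to a twist, but the crude bound suffices.

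The main obstacle is the purity claim in the $p\| d$ case, namely justifying that the eigenvalues on $V_\ell^{I_p}$ have weight $1$. I would handle it via the semistable reduction criterion: potentially good reduction holds because the cluster picture consists of only the top cluster (no twins, no proper subclusters), which \cite{BBB} characterises as potentially good reduction. The inertia-invariant part then injects into the cohomology of the special fibre of a good model over $\bQ_p(\sqrt d)$, where the Weil conjectures apply.
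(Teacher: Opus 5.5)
Your proof is correct, but it reaches the key facts by a somewhat different route from the paper.

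\textbf{The paper's route.} It first shows, globally, that the toric rank of $J_d$ at $p$ is zero: $J_d$ is $L_d$-isogenous to $E_0^2$, which has no toric part, and toric rank cannot drop under finite base change. It then writes the tame conductor as $t+2u+\delta=2u$.
\begin{itemize}
\item When $p\mid d$ this gives $u=1$. So the abelian part of the Chevalley decomposition of the N\'eron special fibre is an elliptic curve, and its Weil bound gives the estimate.
\item When $p\mid(d+3)$ and $p^6\nmid(d+3)$ it gives $u=2$, so there are no inertia invariants.
\end{itemize}

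\textbf{Your route.} You use the form $f=4-\dim V_\ell^{I_p}$ of the tame conductor instead.
\begin{itemize}
\item In the $(d+3)$-case this gives $V_\ell^{I_p}=0$ directly, without needing the toric rank at all.
\item In the $p\mid d$ case you get purity of the two surviving Frobenius eigenvalues from the explicit good model over the totally ramified extension $\bQ_p(\sqrt d)$, which is already implicit in the proof of Lemma~\ref{lem: cluster p|d}.
\end{itemize}
The embedding $V_\ell^{I_p}\subseteq V_\ell^{I_{\bQ_p(\sqrt d)}}$ is a genuine subspace, not merely a subquotient. It is stable under a Frobenius lift taken from $G_{\bQ_p(\sqrt d)}$, which is a legitimate Frobenius for $\bQ_p$ because the residue field is still $\bF_p$. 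So the eigenvalues on it are among those of the good-reduction $H^1$.

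\textbf{Comparison.} Your argument is purely local and self-contained once the cluster computation is in hand. The paper's argument is uniform in $p$ and uses the global CM structure rather than an explicit model. Both ultimately rest on potentially good reduction at $p$. Your remark that the implied constant in $a_{p^i}\ll\sqrt{p^i}$ depends on $i$ is a point the paper leaves implicit; it is harmless, since only $i=1,2$ are used.
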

\begin{proof}
We recall that the toric rank of an abelian variety is non-decreasing under finite base change (cf. \cite[Proposition 2.4]{CX08}).
Since $J_d$ is $L_d$-isogenous to $E_0^2$, whose toric rank is zero, the toric rank of $J_d$ over $\bQ$ must be zero.

Let $p \geq 5$ be a prime where $J_d$ has bad reduction. 
By Proposition \ref{prop: cond bound}, we only need to consider primes $p$ such that $p \mid d$ or $p \mid (d+3)$.
In any case, the conductor exponent at $p$ is $t + 2u + \delta$, where $t$ is the toric rank, $u$ is the unipotent rank, and $\delta$ is the Swan conductor, as we discussed in (\ref{eqn: cond exp sum}).
By Proposition \ref{prop: cond exp p|d}, $J_d$ has tame reduction at $p$, toric rank $0$, and the conductor exponent is $2$ when $p \mid d$.
So the abelian variety part of the Chevalley decomposition has dimension $1$, which gives the first claim.
When $p \mid (d+3)$ and $p^6 \nmid (d+3)$, we have $u = 2$ by Proposition \ref{prop: cond exp p|d+3}.
Since the unipotent group has no $\ell$-torsion, we have $\dim_{\bQ_{\ell}} (V_{\ell}J_d)^{I_{\bQ_p}} = 0$ in this case.
\end{proof}

\begin{remark}
Since $C_d$ has tame reduction at all primes $p > 3$, one can already obtain an explicit upper bound for the average analytic rank without determining the conductor precisely. 
Nevertheless, using the exact conductor exponents improves the bound (without it, we would obtain $24$ instead of $18$ in Theorem \ref{thm: AAR}).
Moreover, the cluster pictures in Lemma \ref{lem: cluster p|d} and Lemma \ref{lem: cluster picture p|d+3} explain why the twists $C_d$ exhibit a comparatively simple local structure, compared to that of a generic genus $2$ curve.
\end{remark}

\section{Main results} \label{sec: proof of main}

\subsection{Proof of the main theorem}
Throughout this section, we use the normalization of the $L$-functions introduced in section \ref{sec: pre}, replacing $s$ by $s+\frac{1}{2}$.
In particular, the central point is $s = \frac{1}{2}$.
We first recall the explicit formula following Iwaniec--Kowalski \cite{IK}.
Let $f$ be an object with root number $w(f)$, conductor $\ff(f)$, gamma factor $\gamma(f, s)$, and Dirichlet series
\begin{align*}
    L(f, s) = \sum_{n=1}^{\infty} \frac{\lambda_n(f)}{n^s}.
\end{align*}
Using the conductor and the gamma factor, the completed $L$-function is defined by
\begin{align*} 
    \Lambda(f, s) = \ff(f)^{\frac{s}{2}}\gamma(f, s) L(f, s),
\end{align*}
and it satisfies the functional equation 
\begin{align*}
    \Lambda(f, s) = w(f) \Lambda(\overline{f}, 1 - s).
\end{align*}
We define $\Lambda_n(f)$ as the constants satisfying
\begin{align*}
    -\frac{L'}{L}(f, s) = \sum_{n=1}^{\infty} \frac{\Lambda_n(f)}{n^s}.
\end{align*}
By section \ref{sec: pre}, for a genus $2$ curve $C$ that has good reduction at $p$, 
\begin{align*}
    \Lambda_{p^k}(C) = \lbrb{\sum_{i=1}^4 \lbrb{ \frac{\alpha_{p, i}}{\sqrt{p}} }^k}  \log p
    = b_{p^k}(C) \log p.
\end{align*}
Note that $b_p(C) = \lambda_p(C)$ and $b_{p^2}(C) \neq \lambda_{p^2}(C)$.

The following is an explicit formula from \cite[Theorem 5.12]{IK}.
\begin{proposition} \label{prop: explicit}
    Let $\phi$ be an even Schwartz function whose Fourier transform is compactly supported.
    Under the generalized Riemann hypothesis for $L(f, s)$, 
    \begin{align*}
        \sum_{\rho} \phi \lbrb{ \gamma \frac{\log X}{2 \pi}}
        &= \widehat{\phi}(0) \frac{\log \ff(f)}{\log X}
        - \frac{1}{\log X} \sum_n \lbrb{\frac{\Lambda_n(f) + \Lambda_n(\overline{f}) }{\sqrt{n}}} \widehat{\phi}\lbrb{\frac{\log n}{\log X}}\\
        &+ \frac{1}{2 \pi} \int^{\infty}_{-\infty} \lbrb{\frac{\gamma'}{\gamma}\left(f,  \frac{1}{2} + it\right) + \frac{\gamma'}{\gamma}\left(\overline{f},  \frac{1}{2} + it \right)} \phi\lbrb{\frac{t\log X}{2 \pi}} dt
    \end{align*}
    where $\rho = \frac{1}{2} + i\gamma$ is taken over non-trivial zeros of $L(f, s)$.
\end{proposition}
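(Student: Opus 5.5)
The plan is to derive the formula from the functional equation by contour integration of the logarithmic derivative of the completed $L$-function against a Mellin-type transform of $\phi$. Set $R=\log X/(2\pi)$ and, for $s=\frac12+it$, put $h(s)=\phi\bigl(\frac{(s-\frac12)\log X}{2\pi i}\bigr)$. Since $\widehat{\phi}$ is compactly supported, say in $[-a,a]$, the Paley--Wiener theorem shows that $h$ is entire and satisfies $h(s)\ll_A (1+|s|)^{-A}X^{a|\sigma-\frac12|}$ in vertical strips. Thus $h$ decays fast enough on vertical lines to justify every shift of contour below. Also $h(s)=h(1-s)$, because $\phi$ is even.

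The first step is to consider
\begin{align*}
I=\frac{1}{2\pi i}\int_{(c)} \frac{\Lambda'}{\Lambda}(f,s)\,h(s)\,ds
\end{align*}
for some $c>1$. The integrand is $O(1)$ in absolute value on that line, so $I$ converges. We then move the line of integration to $\Re s = 1-c$. Since $\Lambda(f,s)$ is entire of order one (the $L$-function of $C_d$ continues analytically because $J_d$ has CM), the only poles crossed are the nontrivial zeros $\rho$. Their residues give $\sum_\rho h(\rho)$, and under GRH this equals $\sum_\rho \phi(\gamma\log X/2\pi)$. The horizontal segments vanish in the limit. To see this, use the standard bound $\frac{\Lambda'}{\Lambda}(s)\ll \log(|t|+2)$ on suitably chosen heights $T$ avoiding zeros (from the Hadamard product and the zero count $N(T+1)-N(T)\ll\log(\ff T)$), combined with the rapid decay of $h$.

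The second step uses the functional equation $\Lambda(f,s)=w(f)\Lambda(\overline f,1-s)$, which gives $\frac{\Lambda'}{\Lambda}(f,s)=-\frac{\Lambda'}{\Lambda}(\overline f,1-s)$. Substituting $s\mapsto 1-s$ together with $h(s)=h(1-s)$ turns the integral on $\Re s=1-c$ into an integral of $\frac{\Lambda'}{\Lambda}(\overline f,s)h(s)$ on $\Re s=c$. This yields
\begin{align*}
\sum_\rho h(\rho)=\frac{1}{2\pi i}\int_{(c)}\Bigl(\frac{\Lambda'}{\Lambda}(f,s)+\frac{\Lambda'}{\Lambda}(\overline f,s)\Bigr)h(s)\,ds.
\end{align*}
Next we expand $\frac{\Lambda'}{\Lambda}=\frac12\log\ff+\frac{\gamma'}{\gamma}+\frac{L'}{L}$ and treat the three terms separately.

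The conductor term can be moved to $\Re s=\frac12$ freely. It gives $\log\ff(f)\cdot\frac{1}{2\pi}\int\phi(tR)\,dt=\widehat\phi(0)\frac{\log\ff(f)}{\log X}$. The gamma term is holomorphic for $\Re s\ge \frac12$, since the gamma factor is a product of $\Gamma_\mathbb{C}(s+\mu_j)$ with $\Re\mu_j\ge 0$. We may therefore shift it to $\Re s=\frac12$ as well, which produces exactly the stated integral of $\frac{\gamma'}{\gamma}(f,\frac12+it)+\frac{\gamma'}{\gamma}(\overline f,\frac12+it)$. For the Dirichlet series term on $\Re s=c$, we write $-\frac{L'}{L}=\sum\Lambda_n(f)n^{-s}$ and interchange sum and integral, which absolute convergence permits. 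Each resulting integral is $\frac{1}{2\pi i}\int_{(c)}n^{-s}h(s)\,ds$. Shifting it to $\Re s=\frac12$ and applying Fourier inversion gives $\frac{1}{\sqrt n\log X}\widehat\phi\bigl(\frac{\log n}{\log X}\bigr)$. This produces the prime sum with the minus sign, and since $\widehat\phi$ has compact support the sum is finite.

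The main obstacle is the analytic justification in the first step: vanishing of the horizontal integrals and legitimacy of the contour shift. This requires growth estimates for $\frac{\Lambda'}{\Lambda}$ and the bound $h(\sigma+it)\ll X^{a|\sigma-1/2|}(1+|t|)^{-A}$, both of which are standard. Because the statement coincides with \cite[Theorem 5.12]{IK}, we would quote that theorem for these details. The only items to verify are that $L(C_d,s)$ satisfies its axioms: entire continuation and functional equation via CM, and a Ramanujan-type bound $|\Lambda_n|\le 4\log n$ coming from $|\alpha_{p,i}|=\sqrt p$ together with Proposition \ref{prop: bad ap} at the bad primes.
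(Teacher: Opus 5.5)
Your proposal is correct and takes essentially the same approach as the paper, which simply quotes \cite[Theorem 5.12]{IK}. Your contour-shift argument is the standard proof of that theorem: the same use of the functional equation to fold the left line back onto $\Re s=c$, and the same Fourier-inversion evaluation of the conductor, gamma and prime-power terms. One small imprecision: $\frac{\Lambda'}{\Lambda}(f,s)$ is not $O(1)$ on $\Re s=c$, because the gamma factor contributes a term of size $\log(|t|+2)$; the rapid decay of $h$ still makes $I$ converge, so nothing breaks.
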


Recall the definition of $S(X)$ from the introduction.
Then
\begin{align*}
    |S(X)| = \frac{6}{\pi^2}X + O\lbrb{X^{\frac{1}{2}}}.
\end{align*}
Let
\begin{align*}
    S_1 :=\frac{2}{|S(X)|\log X}  \sum_{d \in S(X)}
    \sum_p \frac{b_{p}(C_d) \log p }{\sqrt{p}} \widehat{\phi}\lbrb{\frac{\log p}{\log X}}
\end{align*}
and
\begin{align*}
    S_2 := \frac{2}{|S(X)|\log X} \sum_{d \in S(X)} \sum_p \frac{b_{p^2}(C_d) \log p }{p} \widehat{\phi}\lbrb{\frac{2\log p}{\log X}}.
\end{align*}

We recall that $C_d$ over $\bQ$ has the analytic continuation to $\bC$ since it has complex multiplication.
We denote the analytic rank by $g_d$.

\begin{proposition}
Assume the generalized Riemann hypothesis for the family $\lcrc{C_d/\bQ : d \in S(X)}$. Let $\phi$ be a positive test function whose Fourier transform is compactly supported. Then,
\begin{align} \label{eqn: average rank bound}
    \frac{1}{|S(X)|} \sum_{d \in S(X)} g_d \leq (6+o(1))\frac{ \widehat{\phi}(0)}{\phi(0)} - \frac{1}{\phi(0)}S_1 - \frac{1}{\phi(0)}S_2 + O\lbrb{\frac{1}{\log X}}.
\end{align}
\end{proposition}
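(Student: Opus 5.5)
The plan is to apply the explicit formula of Proposition \ref{prop: explicit} to each $L(C_d,s)$, bound the left side from below by the contribution of the central zero, and then average over $d \in S(X)$. Since $\phi \geq 0$ on $\bR$ and, under GRH, every zero has real $\gamma$, dropping all zeros except $\rho = \frac12$ gives
\begin{align*}
g_d\,\phi(0) \leq \sum_{\rho} \phi\lbrb{\gamma \frac{\log X}{2\pi}}.
\end{align*}
Here $g_d$ is the order of vanishing at the central point. Because the coefficients $a_n(C_d)$ are real, $\overline{f} = f$. Hence the prime-sum term equals $-\frac{2}{\log X}\sum_n \frac{\Lambda_n(C_d)}{\sqrt n}\widehat{\phi}\lbrb{\frac{\log n}{\log X}}$, and the gamma term is twice a single integral.

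First I treat the conductor term. By Propositions \ref{prop: cond bound}, \ref{prop: cond exp p|d} and \ref{prop: cond exp p|d+3}, together with \cite[Theorem 6.2]{BK94} at $p=2,3$, the conductor satisfies $\ff(C_d) \ll d^2(d+3)^4$. The factor $d^2$ comes from exponent $2$ at primes dividing the square-free $d$, and $(d+3)^4$ from exponent at most $4$ at primes dividing $d+3$. Thus $\log \ff(C_d) \leq 6\log X + O(1)$ for $d<X$, and averaging gives the term $(6+o(1))\widehat{\phi}(0)$. Next, the gamma factor is $\Gamma_\bC(s+\frac12)^2$ for all $d$. Since $\phi$ is Schwartz, the integral of $\frac{\gamma'}{\gamma}\phi(t\log X/2\pi)$ is $O(1/\log X)$ after substituting $t = 2\pi u/\log X$.

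Then I split the prime-power sum by $k$. The $k=1$ and $k=2$ terms over good primes are exactly what enter $S_1$ and $S_2$, after dividing by $|S(X)|$. At the finitely many bad primes $p \mid 6d(d+3)$, Proposition \ref{prop: bad ap} shows the local contributions are either zero or bounded by $O(1)$ per prime power. Since $\sum_{p\mid d} \frac{\log p}{\sqrt p} \ll \log\log d$ or so, their total after dividing by $\log X$ is $o(1)$ or $O(1/\log X)$. I would absorb these into the error term, or include them in $S_1, S_2$ if the paper's definition of $b_{p^k}$ there is meant over all primes. The terms with $k \geq 3$ satisfy $|b_{p^k}| \leq 4$ and so contribute $\frac{1}{\log X}\sum_p \sum_{k\geq3} \frac{4\log p}{p^{k/2}} = O(1/\log X)$.

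Dividing by $\phi(0)>0$ then yields \eqref{eqn: average rank bound}. I expect the main bookkeeping obstacle to be the bad primes. One needs $\Lambda_{p^k}$ at bad $p$ to be bounded uniformly, since the local factor has degree at most $2$ with roots of absolute value $\le 1$ after normalization. Their total number per $d$ is $\ll \log X$, so crude bounds give only $O(\log X/\log X)=O(1)$. To get $o(1)$ I would use instead the weighted bound $\sum_{p\mid N}\log p/\sqrt p \ll \sqrt{\log N}$, which makes their contribution $O((\log X)^{-1/2})$, within the $o(1)$.
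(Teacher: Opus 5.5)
Your proposal is correct and follows the paper's proof step for step. It uses positivity of $\phi$ under GRH to get $g_d\phi(0)\le\sum_\rho\phi(\gamma\frac{\log X}{2\pi})$, then the explicit formula, then the conductor bound $\ff(C_d)\ll d^2(d+3)^4$ to produce $(6+o(1))\widehat{\phi}(0)$, and finally the Weil bound to discard the $k\ge 3$ prime powers and the gamma term; your $O(1/\log X)$ for the gamma term is even slightly sharper than the paper's $\log\log X/\log X$.

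The only difference is bookkeeping at the bad primes. The paper keeps them inside $S_1$ and $S_2$ and shows they are $O(1/\log X)$ only after averaging over $d$ (in Propositions \ref{prop: S1} and \ref{prop: S2}), which is the cleaner route. Pointwise one only has $\sum_{p\mid d}\log p/\sqrt{p}\ll\sqrt{\log d}$; your first guess $\log\log d$ fails for primorial-like $d$. The pointwise route therefore gives only $O((\log X)^{-1/2})$, which must be absorbed into the $o(1)$.
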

\begin{proof}
We note that $\gamma(f, s)$ is a product of an exponential function $\pi^{-\frac{ks}{2}}$ for some $k \in \bZ$ and the Gamma functions (cf. \cite[(5.3)]{IK}).
Therefore
\begin{align*}
    \frac{1}{2 \pi} \int^{\infty}_{-\infty} \lbrb{\frac{\gamma'}{\gamma}\left(f,  \frac{1}{2} + it\right) + \frac{\gamma'}{\gamma}\left(\overline{f},  \frac{1}{2} + it\right)} \phi\lbrb{\frac{t\log X}{2 \pi}} dt \ll \frac{\log \log X}{\log X}.
\end{align*}
On the other hand, the Weil bound for genus $2$ curves gives that $|\Lambda_{p^k}(C_d)|  \leq 4\log p$, which gives
\begin{align*}
    \frac{1}{\log X} \sum_{\substack{n = p^k \\ k \geq 3}} \lbrb{\frac{\Lambda_n(C_d) + \overline{\Lambda_n(C_d)} }{\sqrt{n}}} \widehat{\phi}\lbrb{\frac{\log n}{\log X}} \ll \frac{1}{\log X}.
\end{align*}

Since the test function $\phi$ is positive, 
\begin{align*}
\frac{1}{|S(X)|} \sum_{d \in S(X)} g_d \leq 
    \frac{1}{\phi(0)|S(X)|} \sum_{d \in S(X)} \sum_{\gamma_d} \phi \lbrb{\gamma_d \frac{\log X}{2 \pi}}
\end{align*}
where $\rho_d = \frac{1}{2} + i\gamma_{d}$ are non-trivial zeros of $L(C_d/\bQ, s)$.
By the explicit formula in Proposition \ref{prop: explicit}, 
\begin{align*}
    \sum_{\gamma_d} \phi \lbrb{ \gamma_d \frac{\log X}{2 \pi}}
    = \widehat{\phi}(0) \frac{\log \ff(C_d)}{\log X}
        &- \frac{2}{\log X} \sum_p \frac{\Lambda_p(C_d) }{\sqrt{p}} \widehat{\phi}\lbrb{\frac{\log p}{\log X}} \\
    &- \frac{2}{\log X} \sum_p \frac{\Lambda_{p^2}(C_d)}{p} \widehat{\phi}\lbrb{\frac{2\log p}{\log X}} + O \lbrb{\frac{1}{\log X}}.
\end{align*}
Hence, the average of analytic ranks is bounded by 
\begin{align*}
    &\frac{\widehat{\phi}(0)}{\phi(0)|S(X)|\log X} \sum_{d \in S(X)} \log \ff(C_d) \\
    &-\frac{2}{\phi(0)|S(X)|\log X} \sum_{d \in S(X)}
    \sum_p \frac{\Lambda_{p}(C_d)  }{\sqrt{p}} \widehat{\phi}\lbrb{\frac{\log p}{\log X}} \\
    &-\frac{2}{\phi(0)|S(X)|\log X} \sum_{d \in S(X)}
    \sum_p \frac{\Lambda_{p^2}(C_d) }{p} \widehat{\phi}\lbrb{\frac{2\log p}{\log X}} + O \lbrb{\frac{1}{\log X}}.
\end{align*}
By Proposition \ref{prop: cond exp p|d}, we have $\ff(C_d) \leq C d^2 (d+3)^4$, so 
\begin{align*}
    \sum_{d \in S(X)} \log \ff(C_d)
    &\leq   |S(X)| \log C + (6 + o(1))\sum_{d \in S(X)} \log d \\
    &\leq   |S(X)| \log C + (6 + o(1)) \frac{6}{\pi^2}X \log X + O(X)
\end{align*}
which gives
\begin{align*}
    \frac{\widehat{\phi}(0)}{\phi(0)|S(X)|\log X} \sum_{d \in S(X)} \log \ff(C_d)
    = (6+o(1))\frac{ \widehat{\phi}(0)}{\phi(0)} + O\lbrb{\frac{1}{\log X}}.
\end{align*}
Since $\Lambda_p(C_d) = \lambda_p(C_d) \log p$, we can recover $S_1$ and $S_2$.
\end{proof}

We recall that $\alpha_d$ is a zero of
\begin{align*}
    g_d(x) := x^3 - \frac{3}{4}x - \frac{d-3}{4(d+3)}.
\end{align*}

\begin{lemma} \label{lem: twisted d counting}
If $p\ge 5$ is a prime, then the number of $d\in \bF_p\setminus\lcrc{0,-3}$ with the prescribed data $(p\bmod 3,\ \quadsym{d}{p},\ \text{splitting type of }g_d)$
is summarized in the following Table \ref{tab:gd-splitting-mod3}.
\begin{center}
\begin{tabular}{c|c|c|c}
\hline
$p \bmod 3$ & $\quadsym{d}{p}$ & splitting type of $g_d$ in $\bF_p$ 
& $\#\{d\in \bF_p\setminus\lcrc{0,-3}\}$ \\ \hline
$1$ & $+1$ & split completely & $\frac{p-7}{6}$ \\
$1$ & $+1$ & irreducible      & $\frac{p-1}{3}$ \\
$1$ & $-1$ & linear $\times$ quadratic & $\frac{p-1}{2}$ \\ \hline
$2$ & $+1$ & split completely & $\frac{p-5}{6}$ \\
$2$ & $+1$ & irreducible      & $\frac{p+1}{3}$ \\
$2$ & $-1$ & linear $\times$ quadratic & $\frac{p-3}{2}$ \\ \hline 
\end{tabular}
\captionof{table}{}
\label{tab:gd-splitting-mod3}
\end{center}
\end{lemma}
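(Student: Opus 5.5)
The approach is to count pairs $(d,r)$ with $r\in\bF_p$ a root of $g_d$. Everything reduces to two inputs: the discriminant of $g_d$, and the rational parametrization of $d$ by a root $r$ that already appears in the proof of Lemma \ref{lem: Q alpha sqrtd S3}.

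\emph{Step 1 (discriminant dichotomy).} For $p\ge 5$ and $d\in\bF_p\setminus\{0,-3\}$, the cubic $g_d$ is well defined over $\bF_p$. Its discriminant is $\frac{81d}{4(d+3)^2}$, which is nonzero, so $g_d$ is separable. Its square class is that of $d$.
\begin{itemize}
\item If $\quadsym{d}{p}=-1$, the Galois group of $g_d$ over $\bF_p$ is cyclic and not contained in $A_3$. Hence Frobenius acts as a transposition, and $g_d$ is linear $\times$ quadratic.
\item If $\quadsym{d}{p}=+1$, Frobenius lies in $A_3$. Hence $g_d$ either splits completely or is irreducible.
\end{itemize}
So the third row of each block is simply the number of nonresidues in $\bF_p\setminus\{0,-3\}$. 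Since $\quadsym{-3}{p}=1$ exactly when $p\equiv 1\pmod 3$, this number is $\frac{p-1}{2}$ for $p\equiv1$ and $\frac{p-3}{2}$ for $p\equiv 2$. Likewise, the number of residues $d\ne 0,-3$ is $\frac{p-3}{2}$, respectively $\frac{p-1}{2}$.

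\emph{Step 2 (counting roots via the parametrization).} If $g_d(r)=0$, then, as in the proof of Lemma \ref{lem: Q alpha sqrtd S3},
\begin{align*}
d=-3\,\frac{4r^3-3r+1}{4r^3-3r-1}=3\,\frac{(1+r)(2r-1)^2}{(1-r)(2r+1)^2}.
\end{align*}
Conversely, every $r\in\bF_p$ with $4r^3-3r-1\neq 0$ is a root of $g_{d(r)}$. The map $r\mapsto d(r)$ therefore sets up a bijection between pairs $(d,r)$ with $d\in\bF_p\setminus\{0,-3\}$, $g_d(r)=0$, and admissible $r\in\bF_p$. The admissible $r$ are found as follows.
\begin{itemize}
\item The pole condition removes $r=1$ and $r=-\tfrac12$.
\item The condition $d(r)=0$ removes $r=-1$ and $r=\tfrac12$.
\item The value $d(r)=-3$ is never attained for finite $r$, since it would force $4r^3-3r+1=4r^3-3r-1$; it occurs only at $r=\infty$.
\end{itemize}
For $p\ge5$ these four excluded values are distinct, so there are exactly $p-4$ admissible $r$.

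\emph{Step 3 (solve for the counts).} Let $S$ be the number of totally split $d$ and $N_1$ the number of linear $\times$ quadratic $d$. Counting roots gives
\begin{align*}
3S+N_1=p-4.
\end{align*}
Substituting $N_1$ from Step 1 yields $S=\frac{p-7}{6}$ for $p\equiv1\pmod 3$ and $S=\frac{p-5}{6}$ for $p\equiv 2\pmod 3$. The irreducible count is then the number of residues minus $S$, namely $\frac{p-1}{3}$, respectively $\frac{p+1}{3}$. This matches Table \ref{tab:gd-splitting-mod3}.

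The main point needing care is Step 2: checking that the parametrization is an exact bijection onto root pairs, with no double counting and correct handling of the excluded values $0,-3,\infty$. Separability, guaranteed by the nonvanishing discriminant for $d\neq0$, ensures each totally split $d$ contributes exactly $3$ distinct $r$.
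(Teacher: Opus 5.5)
Your proof is correct and follows essentially the same route as the paper: the discriminant $\frac{81d}{4(d+3)^2}$ shows that $g_d$ is linear $\times$ quadratic exactly when $d$ is a nonresidue, and counting pairs $(d,r)$ with $g_d(r)=0$ gives $3n_s+n_l=p-4$. The only cosmetic difference is that the paper counts solutions of $4t^3-3t=\kappa$ with $\kappa=\frac{d-3}{d+3}\neq\pm1$, whereas you compose this with the inverse M\"obius map to write $d$ directly as a rational function of $r$, which excludes the same four values $r=\pm1,\pm\tfrac12$.
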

\begin{proof}
We recall that the discriminant of $g_d$ is $\frac{81d}{4(d+3)^2}$.
Hence for $d \neq 0, -3 \pmod{p}$, 
\begin{align*}
    \quadsym{\mathrm{disc}(g_d)}{p} = \quadsym{d}{p}.
\end{align*}
Therefore, $d$ is non-square modulo $p$ if and only if $g_d$ is a product of linear and quadratic polynomials.
Since $-3$ is a square in $\mathbb{F}_p$ if and only if $p \equiv 1 \pmod{3}$, we have the third and sixth row.


Let $\kappa_d := \frac{d-3}{d+3}$ for $d \not\equiv -3 \pmod{p}$.
Then the map $d \mapsto \kappa_d$ gives a bijection between $\bF_p \setminus\lcrc{0, -3}$ and $\bF_p \setminus \lcrc{\pm 1}$.
Also, $g_d(t) = 0$ if and only if $4t^3 - 3t = \kappa_d$ unless $d \equiv -3 \pmod{p}$.
Since the $\bF_p$-points of $4x^3 - 3x = \pm 1$ is $\pm \frac{1}{2}$, and  $\pm 1$,
we have
\begin{align*}
    \# \lcrc{(t, d) \in \bF_p^2 : g_d(t) = 0, d \neq 0, -3} 
    &= \# \lcrc{(t, \kappa ) \in \bF_p^2 : 4t^3 - 3t = \kappa, \, \kappa \neq \pm 1 } \\
    &= p - \# \lcrc{t \in \bF_p : 4t^3 - 3t = \pm 1 } \\
    &= p - 4.
\end{align*}
Let $n_s, n_l$ be the number of $d \not\equiv 0, -3 \pmod{p}$ such that $g_d$ splits completely, is a product of linear and quadratic over $\bF_p$, respectively.
Then, the above computation shows that $3n_s +n_l = p-4. $
Hence, the results follow.
\end{proof}

We recall that $S(X)$ is the set of square-free integers $d$ satisfying $0 < d  < X$.
In this definition and the following lemma, we use $\overline{n}$ to denote the image of $n \in \bZ$ in $\bF_p$.
For a positive real number $X$ and a subset $A_p \subset \bF_p^\times$, we define
\begin{align*}
    c(A_p, X) := \left| \lcrc{n \in \lbrb{ p\lfloor \frac{X}{p} \rfloor, X  } : \overline{n} \in A_p } \right|.
\end{align*}
There is a naive bound $|c(A_p, X)| \leq |A_p|$. 

\begin{lemma} \label{lem:sqf sieve}
Let $p \geq 5$ be a prime and let $w: \bF_p \to \bC$ be a weight function on $\bF_p$ satisfying $w(0) = 0$.
Then,
\begin{align*}
    \sum_{\substack{ d \in S(X)}} w(\overline{d}) =
    \frac{1}{1 - p^{-2}}\frac{|S(X)|}{p} \sum_{a \in \bF_p^\times} w(a)  + O\lbrb{  \sqrt{X} \sum_{a \in \bF_p^\times}|w(a)| }.
\end{align*}
\end{lemma}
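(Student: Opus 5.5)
The plan is to reduce the weighted sum to counting squarefree integers in a fixed residue class modulo $p$. Since $w(0)=0$, we can write
\begin{align*}
    \sum_{d \in S(X)} w(\overline{d}) = \sum_{a \in \bF_p^\times} w(a)\, N_a(X), \qquad N_a(X) := \#\lcrc{d \in S(X) : d \equiv a \pmod p}.
\end{align*}
It therefore suffices to prove, uniformly in $a \in \bF_p^\times$,
\begin{align*}
    N_a(X) = \frac{1}{1-p^{-2}}\frac{|S(X)|}{p} + O(\sqrt{X}),
\end{align*}
with an implied constant independent of $p$ and $a$. Summing this against $w(a)$ then gives the main term and the error $O(\sqrt{X}\sum_a |w(a)|)$. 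The finitely many excluded values $d=1,3$ change each $N_a$ by $O(1)$, which is absorbed.

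To count $N_a(X)$, we use $\mu^2(d)=\sum_{m^2\mid d}\mu(m)$. Because $p\nmid a$, only $m$ with $p \nmid m$ contribute. For such $m$, the conditions $m^2 \mid d$ and $d\equiv a \pmod p$ define a single residue class modulo $pm^2$, since $\gcd(m^2,p)=1$. Hence
\begin{align*}
    N_a(X) = \sum_{\substack{m \le \sqrt{X} \\ p\nmid m}} \mu(m)\lbrb{\frac{X}{pm^2} + O(1)} = \frac{X}{p}\prod_{\ell \ne p}(1-\ell^{-2}) + O\lbrb{\frac{X}{p}\sum_{m>\sqrt X} m^{-2}} + O(\sqrt{X}).
\end{align*}
This equals $\frac{6}{\pi^2}\frac{X}{p}\frac{1}{1-p^{-2}} + O(\sqrt{X})$. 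Finally, we substitute $|S(X)| = \frac{6}{\pi^2}X + O(X^{1/2})$. The resulting discrepancy is $O(X^{1/2}/p)$, which is again $O(\sqrt X)$.

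The main point needing care is uniformity in $p$ and $a$. The $O(1)$ error per $m$ does not depend on $p$, and the tail is bounded by $X/(p\sqrt X)\le \sqrt X$. So the error constant is absolute and holds for all $p\ge 5$, including $p$ larger than $X$. In that range the main term is tiny and the count is trivially $O(1)\le O(\sqrt X)$, so the estimate remains valid. Note also that the quantity $c(A_p,X)$ introduced before the lemma is not needed here.
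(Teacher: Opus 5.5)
Your proposal is correct and takes essentially the paper's route: expand $\mu^2(d)=\sum_{m^2\mid d}\mu(m)$, discard $p\mid m$ (via $w(0)=0$, equivalently $p\nmid a$), count in progressions, and use $\sum_{p\nmid m}\mu(m)/m^2=\frac{1}{\zeta(2)}\frac{1}{1-p^{-2}}$. The only cosmetic difference is that you split into residue classes $a$ first, where the paper keeps $w$ inside and uses that multiplication by $m^2$ permutes $\bF_p^\times$; your explicit handling of the tail $\sum_{m>\sqrt X}$ and of uniformity in $p$ fills in details the paper leaves implicit.
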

\begin{proof}
By the coordinate change $d = m^2k$, 
\begin{align*}
    \sum_{\substack{ d \in S(X) }} w(\overline{d})
    &= \sum_{\substack{d < X }} \mu^2(d)w(\overline{d})
    = \sum_{m < \sqrt{X}} \mu(m) \sum_{\substack{k < X/m^2}}w(\overline{m^2k}).
\end{align*}
Since $w(0) = 0$, the contribution of $p \mid m$ is zero.
On the other hand, when $p \nmid m$, multiplication by $m^2$ induces a permutation on $\bF_p^\times$.
Therefore, 
\begin{align*}
    \sum_{k \leq Y } w(\overline{m^2k}) = \frac{Y}{p} \sum_{a \in \bF_p^\times}w(a) + O\lbrb{ \sum_{a \in \bF_p^\times}|w(a)| }.
\end{align*}
Together with
\begin{align*}
    \sum_{p \nmid m} \frac{\mu(m)}{m^2} = \frac{1}{\zeta(2)} \frac{1}{1 - p^{-2}},
\end{align*}
we obtain the conclusion.
\end{proof}

When we pick $w$ as a characteristic function on $A_p \subset \bF_p^\times$, we have an error term $|A_p|\sqrt{X}$.
We note that the contribution at $0 \in \bF_p$ differs from that of elements in $\bF_p^\times$.
Similar phenomena have been observed in \cite[Proposition 2.3]{CJ1}.

Using the various prime number theorems, we can bound the weighted sum of traces of Frobenius.

\begin{lemma} 
Let $E_0 : y^2 = x^3+1$ be the elliptic curve.
Let $\widehat{\phi}$ be an even absolutely continuous function supported in $[-\sigma, \sigma]$ which is the Fourier transform of $\phi$.
Then we have
\begin{align}
    \frac{1}{\log X}\sum_{p \equiv 1 \, (3)} \frac{\log p}{p^2} \widehat{\phi} \lbrb{\frac{2 \log p }{\log X}} (a_p(E_0)^2 - 2 p) &= O\lbrb{\frac{1}{\log X}}. \label{eqn: ap^2 weight E0-2p} 
\end{align}
\end{lemma}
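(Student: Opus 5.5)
The plan is to use the complex multiplication of $E_0$ to rewrite $a_p(E_0)^2-2p$ as the value at $p$ of a nontrivial unitary Hecke character. The prime number theorem for that character should then make the weighted prime sum converge, so that after dividing by $\log X$ only $O(1/\log X)$ remains. Note that the trivial bound is not enough: by Hasse, $a_p^2-2p = O(p)$, so the summand is $O(\log p/p)$. Summing this over $p \le X^{\sigma/2}$ gives $O(\sigma\log X)$, and hence only $O(1)$ after dividing by $\log X$. Genuine cancellation is needed.

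\emph{Step 1 (CM decomposition).} The curve $E_0: y^2=x^3+1$ has CM by $\bZ[\zeta_3]$. There is an algebraic Hecke character $\psi$ of $M=\bQ(\sqrt{-3})$ with infinity type $(1,0)$ such that, for every prime $p\equiv 1 \pmod 3$ of good reduction and $p=\pi\bar\pi$ in $\bZ[\zeta_3]$, we have $a_p(E_0)=\psi(\pi)+\psi(\bar\pi)$ with $|\psi(\pi)|=\sqrt p$. Squaring gives
\[
a_p(E_0)^2-2p=\psi(\pi)^2+\psi(\bar\pi)^2 = p\bigl(\chi(\pi)+\chi(\bar\pi)\bigr),
\]
where $\chi:=\psi^2/|\psi|^2$ is a unitary Grössencharacter of $M$ with infinity type $z\mapsto (z/|z|)^2$. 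Since $\chi$ has nontrivial infinity type, it is nontrivial. After this rewriting the summand becomes $\frac{\log p}{p}\bigl(\chi(\pi)+\chi(\bar\pi)\bigr)\widehat\phi\bigl(\tfrac{2\log p}{\log X}\bigr)$. The sum over $p\equiv 1\pmod 3$ is, up to the single ramified prime $3$ and an $O(1)$ contribution from prime ideals of degree $2$ (norm $p^2$), exactly $\sum_{\mathfrak p} \chi(\mathfrak p)\log N\mathfrak p/N\mathfrak p$ over degree-one primes of $M$.

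\emph{Step 2 (prime number theorem for $\chi$).} By Hecke, $L(\chi,s)$ is entire and nonvanishing on $\mathrm{Re}\, s=1$. The standard zero-free region then gives
\[
\theta_\chi(x):=\sum_{N\mathfrak p\le x}\chi(\mathfrak p)\log N\mathfrak p = O\bigl(x e^{-c\sqrt{\log x}}\bigr).
\]
GRH is not needed here. By partial summation, $A(x):=\sum_{N\mathfrak p\le x}\chi(\mathfrak p)\log N\mathfrak p/N\mathfrak p$ converges as $x\to\infty$, and in particular is uniformly bounded.

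\emph{Step 3 (inserting the weight).} I would apply Abel summation to $\sum A'(t)\,\widehat\phi(2\log t/\log X)$ on $[2, X^{\sigma/2}]$. The boundary terms are $O(\sup|A|\cdot\|\widehat\phi\|_\infty)=O(1)$. The integral term is bounded by $\sup|A|$ times the total variation of $t\mapsto\widehat\phi(2\log t/\log X)$, which is scale-invariant and equals the total variation of $\widehat\phi$ on $[0,\sigma]$. This is finite because $\widehat\phi$ is absolutely continuous. Hence the inner sum is $O(1)$ uniformly in $X$, and dividing by $\log X$ gives \eqref{eqn: ap^2 weight E0-2p}.

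The main obstacle is Step 1. One has to pin down the Hecke character attached to $E_0$ and verify that $\psi^2/|\psi|^2$ is a genuinely nontrivial Grössencharacter; the nontrivial infinity type settles this. One also has to handle the normalization and the finitely many bad or ramified primes $2,3$, which contribute only $O(1)$.
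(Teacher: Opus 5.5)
Your proposal is correct and follows the same route as the paper. Both arguments write $a_p(E_0)^2-2p=\psi_0(\fp)^2+\psi_0(\bar{\fp})^2$ using the CM Hecke character of $\bQ(\sqrt{-3})$, apply the prime number theorem with de la Vall\'ee Poussin error term to the nontrivial unitary character $\psi_0^2/N$, and finish by partial summation against $\widehat{\phi}(2\log t/\log X)$. Your Abel summation via the bounded partial sums $A(x)$ and the total variation of $\widehat{\phi}$ is a repackaging of the paper's integration by parts with $\widehat{\phi}'$. You also spell out details the paper leaves implicit: the nontriviality of $\chi$, and the removal of the degree-two primes and the ramified prime above $3$.
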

\begin{proof}


Since $E_0$ has complex multiplication by $\bQ(\sqrt{-3})$, there is a Hecke character $\psi_0$ such that
\begin{align*}
a_p(E_0) = \psi_0(\fp) + \psi_0(\overline{\fp}), \qquad \sqrt{p} = |\psi_0(\fp)|  
\end{align*}
where $\fp$ is a prime ideal of the ring of integers of $\bQ(\sqrt{-3})$ when $p \equiv 1 \pmod{3}$ (cf. \cite[Corollary 10.4.1]{Sil2}).
We also note that $a_p(E_0) = 0$ if $p \equiv 2 \pmod{3}$.
Since
\begin{align*}
    a_p(E_0)^2 - 2p = (\psi_0(\fp) + \psi_0(\overline{\fp}))^2 - 2 |\psi_0(\fp)|^2
    = \psi_0(\fp)^2 + \overline{\psi_0}(\fp)^2,
\end{align*}
we have
\begin{align*}
\frac{1}{\log X}\sum_{p \equiv 1 \, (3)} \frac{\log p}{p^2} \widehat{\phi} \lbrb{\frac{2 \log p }{\log X}} (a_p(E_0)^2 - 2 p)= 
    \frac{1}{\log X} \sum_{\substack{f(\fp|p) = 1 }} \frac{\log N(\fp)}{N(\fp)^2} \widehat{\phi} \lbrb{\frac{2 \log N(\fp) }{\log X}} \psi_0(\fp)^2 .
\end{align*}
Here $N$ is the norm map from $\bQ(\sqrt{-3})$ to $\bQ$.
We consider
\begin{align*}
    \vartheta_{\psi_0^2}(t) := \sum_{\substack{N(\fp) \leq t \\ f(\fp \mid p) = 1}} \frac{\log N(\fp)}{N(\fp)} \psi_0(\fp)^2
\end{align*}
which satisfies the prime number theorem
\begin{align*}
    \vartheta_{\psi_0^2}(t) = O\lbrb{t \exp (-c \sqrt{\log t})}
\end{align*}
for some $c>0$.
By the usual partial summation argument (cf. \cite[Lemma 5.5]{CJP}), we have
\begin{align*}
    &\frac{1}{\log X}\sum_{\substack{ f(\fp| p) = 1}} \frac{\log N(\fp) }{ N(\fp)^2 } \widehat{\phi}\lbrb{\frac{ 2 \log N(\fp)}{ \log X}} \psi_0(\fp)^2 \\
    &= \int^{X^{\frac{\sigma}{2}}}_1 \frac{1}{ t \log X} \widehat{\phi}\lbrb{\frac{2\log t}{\log X}} d\vartheta_{\psi_0^2}(t)  \\
    &= -\int^{X^{\frac{\sigma}{2}}}_1 \vartheta_{\psi_0^2}(t) \frac{d}{dt} \lbrb{\frac{1}{ t \log X} \widehat{\phi} \lbrb{\frac{2 \log t}{\log X}}  } dt + O\lbrb{\frac{1}{\log X}} \\
    &= -\int^{X^{\frac{\sigma}{2}}}_1 \vartheta_{\psi_0^2}(t) 
    \lbrb{  -\frac{1}{t^{2}\log X} \widehat{\phi}\lbrb{\frac{2\log t}{\log X}} + \frac{2}{t^{2} \log^2 X} \widehat{\phi}' \lbrb{\frac{2\log t}{\log X}}  }
    dt + O\lbrb{\frac{1}{\log X}} \\
    & \ll \frac{1}{\log X}.
\end{align*}
Hence, we have the conclusions.
\end{proof}

\begin{proposition} \label{prop: S1}
Suppose that $\mathrm{supp}(\widehat{\phi}) \subset [-\sigma, \sigma]$ for $\sigma > 0$.
Then,
\begin{align*}
    S_1  \ll \frac{1 + X^{\frac{3\sigma}{2} - \frac{1}{2}}}{\log X}.
\end{align*}
\end{proposition}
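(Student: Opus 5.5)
We swap the order of summation in $S_1$ and write
\begin{align*}
S_1 = \frac{2}{|S(X)|\log X}\sum_{p \le X^{\sigma}} \frac{\log p}{\sqrt p}\,\widehat{\phi}\lbrb{\frac{\log p}{\log X}} \sum_{d\in S(X)} b_p(C_d).
\end{align*}
We then split the primes into three groups: $p=2,3$; primes $p\ge5$ that divide $d$ or $d+3$; and good primes $p\ge5$. The primes $2,3$ contribute $O(1/\log X)$ by the Weil-type bound $|b_p|\le 4$.

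For good $p\ge 5$, the quantity $b_p(C_d) = -a_{p,1}(C_d)/\sqrt p$ depends only on $I(p)=(f_{L_d}(p),f_{K_d}(p),f_M(p))$ and on $a_p=a_p(E_0)$, by Proposition \ref{prop: list Lp}. Since $K_d=\bQ(\alpha_d,\sqrt d,\sqrt{-3})$ and $L_d=K_d(\sqrt{6(d+3)})$ by Proposition \ref{prop: L bound}, the triple $I(p)$ is determined by four pieces of data: $p \bmod 3$, $\quadsym{d}{p}$, the splitting type of $g_d$ mod $p$, and $\quadsym{6(d+3)}{p}$. All of these depend only on $\overline d\in\bF_p\setminus\{0,-3\}$. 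We may therefore write $b_p(C_d)=w_p(\overline d)$ for a weight $w_p$ on $\bF_p$, and set $w_p(0)=w_p(-3)=0$; the contributions of the residues $0$ and $-3$ are handled separately below. Lemma \ref{lem:sqf sieve} then gives
\begin{align*}
\sum_{d\in S(X)} w_p(\overline d) = \frac{|S(X)|}{(1-p^{-2})p}\sum_{a\in\bF_p^\times} w_p(a) + O\lbrb{\sqrt X\, p}.
\end{align*}
The error term is where the exponent comes from. Summing $\frac{\log p}{\sqrt p}\cdot\sqrt X p$ over $p\le X^\sigma$ gives $\sqrt X\,X^{3\sigma/2}$. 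After dividing by $|S(X)|\log X\asymp X\log X$, this contributes $X^{3\sigma/2-1/2}/\log X$, as in the statement.

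The main step, and the main obstacle, is showing that the main term essentially cancels, i.e.\ that $\sum_{a} w_p(a)$ is small. We will evaluate it using Table \ref{tab:gd-splitting-mod3}. When $p\equiv 2\pmod 3$ we have $a_p=0$, and every row with $f_M=2$ has $a_{p,1}=0$, so $w_p\equiv0$. When $p\equiv1\pmod 3$, the value $a_{p,1}$ is one of $\mp2a_p,\pm a_p,0$ or $\pm\sqrt{3(4p-a_p^2)}$. The rows $(1,1,1)$ and $(2,1,1)$ are distinguished only by $\quadsym{6(d+3)}{p}$, and likewise $(3,3,1)$ and $(6,3,1)$, and the two signs in the $(6,6,1)$ row. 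In each of these pairs the two values of $a_{p,1}$ have opposite signs. We therefore expect that within each splitting class from the table, the character $\quadsym{6(d+3)}{p}$ is equidistributed up to $O(\sqrt p)$. We would prove this with a Weil bound for the character sum of $\quadsym{d+3}{p}$ over the set of $d$ with a given splitting type. That set is parametrized via $\kappa=4t^3-3t$, so the sum becomes a character sum on a curve. This would give $\sum_a w_p(a)\ll \sqrt p\cdot\sqrt p = p$.

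The resulting main-term contribution is
\begin{align*}
\frac{1}{\log X}\sum_{p\le X^\sigma}\frac{\log p}{\sqrt p}\cdot\frac{1}{p}\cdot p\cdot\frac{1}{\sqrt p}\ll \frac{1}{\log X}\sum_{p\le X^{\sigma}}\frac{\log p}{p},
\end{align*}
and this is too big. We would therefore first try for exact cancellation. The sign in the $(6,6,1)$ row is likely undetermined anyway, so exact pairing may be what the authors intend. If exact cancellation fails, we would use the Hecke character $\psi_0$ and a prime number theorem, as in the preceding lemma, for the terms involving $a_p$, to obtain $O(1/\log X)$.

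For the bad primes $p\ge5$ with $p\mid d$, Proposition \ref{prop: bad ap} gives $b_p\ll1$. The number of $d\in S(X)$ with $p\mid d$ is $\ll X/p$, so the contribution is $\ll\frac{1}{\log X}\sum_p \frac{\log p}{p^{3/2}}\ll \frac{1}{\log X}$. For $p\mid d+3$ with $p^6\nmid d+3$, Proposition \ref{prop: bad ap} gives $b_p=0$. The remaining cases, where $p^6\mid d+3$, are negligible.
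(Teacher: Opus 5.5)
Your outline matches the paper's up to the decisive step. The common parts are: swapping the sums, handling $p\mid d$ and $p\mid d+3$ via Proposition~\ref{prop: bad ap}, the vanishing of $b_p(C_d)$ for good $p\equiv 2\pmod 3$, pairing $(1,1,1)$ with $(2,1,1)$ and $(3,3,1)$ with $(6,3,1)$ through $L_d=K_d(\sqrt{6(d+3)})$, and applying Lemma~\ref{lem:sqf sieve}, whose error term is indeed the source of $X^{3\sigma/2-1/2}/\log X$. (The row $(6,6,1)$ never occurs: for $p\equiv 1\pmod 3$ the Frobenius lies in $\Gal(K_d/M)\cong S_3$, so $f_K(p)\in\{1,2,3\}$.)

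\textbf{The gap is the main term.} You note yourself that a Weil-type bound $\sum_{a\in\bF_p^\times}w_p(a)\ll\sqrt p$ only gives $\ll\frac{1}{\log X}\sum_{p\le X^\sigma}\frac{\log p}{p}\ll\sigma$, which is not $O(1/\log X)$. What you need is $\sum_a w_p(a)=O(1)$, or at least $\ll p^{1/2-\delta}$. ``We would first try for exact cancellation'' does not supply this. Nor does the fallback: the preceding lemma works because $a_p(E_0)^2-2p=\psi_0(\fp)^2+\overline{\psi_0}(\fp)^2$ is a Hecke character value. Here you would instead face $a_p(E_0)$ times an unidentified $p$-dependent character sum, and no prime number theorem for $\psi_0$ applies until that sum is computed.

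\textbf{The missing idea is an exact evaluation.} The weights $2,0,-1$ attached to $f_K(p)=1,2,3$ are precisely $N_d-1$, where $N_d$ is the number of roots of $g_d$ in $\bF_p$. Hence for $p\equiv1\pmod 3$,
$$\sum_{a}w_p(a)=\frac{a_p(E_0)}{\sqrt p}\sum_{d\in\bF_p\setminus\{0,-3\}}\quadsym{6(d+3)}{p}(N_d-1).$$
Put $\kappa=\frac{d-3}{d+3}$. Then $d+3=\frac{6}{1-\kappa}$, so $\quadsym{6(d+3)}{p}=\quadsym{1-\kappa}{p}$. Writing $N_d=\#\{t\in\bF_p:4t^3-3t=\kappa\}$ converts $\sum_{\kappa\neq\pm1}\quadsym{1-\kappa}{p}N_d$ into $\sum_{t:\,4t^3-3t\neq\pm1}\quadsym{1-(4t^3-3t)}{p}$. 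The factorization $1-(4t^3-3t)=(1-t)(2t+1)^2$ reduces this to the complete sum $\sum_t\quadsym{1-t}{p}=0$, up to $O(1)$ boundary terms. The remaining piece $\sum_{\kappa\neq\pm1}\quadsym{1-\kappa}{p}$ is also $O(1)$.

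In terms of your plan: the curve $s^2=1-(4t^3-3t)$ on which you wanted to apply Weil is rational, so the character sum is bounded rather than of size $\sqrt p$. With $\sum_a w_p(a)=O(1)$, the main term is $\ll\frac{1}{\log X}\sum_p\frac{\log p}{p^{3/2}}\ll\frac{1}{\log X}$, as the statement requires.
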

\begin{proof} 
Since $b_p(C) = \lambda_p(C)$, we have
\begin{align*}
    S_1= \frac{2}{|S(X)|\log X}  
    \sum_p  \widehat{\phi}\lbrb{\frac{\log p}{\log X}} \frac{\log p}{\sqrt{p}}
    \sum_{d \in S(X)} \lambda_p(C_d).
\end{align*}
By Proposition \ref{prop: list Lp} we have $\lambda_p(C_d) = 0$ if $p \equiv 2 \pmod{3}$ for a good prime $p$.
When $p \mid d$, we have $\lambda_p(C_d) \ll 1$ by Proposition \ref{prop: bad ap}.
Therefore, the contribution of prime $p \mid d$ is
\begin{align*}
    \ll \frac{2}{|S(X)|\log X}  
    \sum_p  \widehat{\phi}\lbrb{\frac{\log p}{\log X}} \frac{\log p}{\sqrt{p}}
    \sum_{\substack{d \in S(X) \\ p \mid d}} 1
    \ll \frac{2}{\log X} \sum_{p < X^{\sigma}} \frac{\log p}{p^{\frac{3}{2}}} \ll \frac{1}{\log X}.
\end{align*}
For the prime $p \mid (d+3)$, we have $\lambda_{p}(C_d) = 0$ if $p^6 \nmid (d+3)$ and $\lambda_p(C_d) \ll 1$ if $p^6 \mid (d+3)$, by Proposition \ref{prop: bad ap}. 
Therefore the contribution of $p \mid (d+3)$ is
\begin{align*}
    \ll \frac{2}{|S(X)|\log X}  
    \sum_p  \widehat{\phi}\lbrb{\frac{\log p}{\log X}} \frac{\log p}{\sqrt{p}}
    \sum_{\substack{d \in S(X) \\ p^6 \mid d + 3}} 1 \ll \frac{1}{\log X}.
\end{align*}

By Proposition \ref{prop: Kd and Cd notwist}, $f_K(p)$ is one of $1, 2, 3$, and $f_M(p) = 1$ if $p \equiv 1 \pmod{3}$.
Hence,
\begin{align*}
    S_1 &= \frac{2}{|S(X)|\log X}  \sum_{p\equiv 1 \,(3)}  \widehat{\phi}\lbrb{\frac{\log p}{\log X}} \frac{\log p}{\sqrt{p}}
    \sum_{d \in S(X)} \lambda_p(C_d) \\
    &= \frac{2}{|S(X)|\log X}  \sum_{p\equiv 1 \,(3)}  \widehat{\phi}\lbrb{\frac{\log p}{\log X}} \frac{\log p}{p} \\
    & \qquad \times
    \lbrb{ \sum_{\substack{d \in S(X) \\ I(p) = (f_L(p),1,1) \\ p \nmid d, d+3}} a_p(C_d)
    + \sum_{\substack{d \in S(X) \\ I(p) = (f_L(p),2,1)\\ p \nmid d, d+3}} a_p(C_d)
    + \sum_{\substack{d \in S(X) \\ I(p) = (f_L(p),3,1)\\ p \nmid d, d+3}} a_p(C_d) } + O\lbrb{\frac{1}{\log X} }.
\end{align*}
Here $f_L(p)$ is $f_K(p)$ or $2f_K(p)$.
Recall that $a_{p}(C_d) = -a_{p, 1}(C_d)$.
By Proposition \ref{prop: list Lp}, the summation in the bracket is 
\begin{align} \label{eqn: big bracket}
a_p(E_0) \lbrb{
    2 \sum_{\substack{d \in S(X) \\ f_L(p) = 1 \\ f_K(p) = 1\\ p \nmid d, d+3}}1
    - 2 \sum_{\substack{d \in S(X) \\ f_L(p) = 2 \\ f_K(p) = 1 \\ p \nmid d, d+3}}1
    - \sum_{\substack{d \in S(X) \\ f_L(p) = 3 \\ f_K(p) = 3 \\ p \nmid d, d+3}}1
    + \sum_{\substack{d \in S(X) \\ f_L(p) = 6 \\ f_K(p) = 3 \\ p \nmid d, d+3}}1
    }.
\end{align}
By Proposition \ref{prop: L bound}, we have $L = K(\sqrt{6(d+3)})$.
Therefore, 
\begin{align} \label{eqn: fL fK}
    f_L(p) = \begin{cases}
        f_K(p) & \textrm{if } \quadsym{6(d+3)}{p} = 1  \textrm{ or } 2 \mid f_K(p), \\
        2f_K(p) & \textrm{if } \quadsym{6(d+3)}{p} \neq 1 \textrm{ and } 2\nmid f_K(p).
    \end{cases} 
\end{align}
Hence, 
\begin{align*}
    \eqref{eqn: big bracket}
    = a_p(E_0) \lbrb{ 2 \sum_{\substack{d \in S(X) \\ f_K(p) = 1 \\ p \nmid d, d+3}} \quadsym{6(d+3)}{p}
    - \sum_{\substack{d \in S(X) \\ f_K(p) = 3 \\ p \nmid d, d+3}} \quadsym{6(d+3)}{p}}.
\end{align*}
To use Lemma \ref{lem:sqf sieve}, we choose the weight function
\begin{align*}
    w(\overline{d}) := 
    \begin{cases}
        2 \quadsym{6(d+3)}{p} & \textrm{if } f_K(p) = 1, d \not\equiv 0, -3 \pmod{p}, \\
        -\quadsym{6(d+3)}{p} & \textrm{if } f_K(p) = 3, d \not\equiv 0, -3 \pmod{p}, \\
        0 & \textrm{otherwise}.
    \end{cases}
\end{align*}
Let $N_d$ be the number of $\bF_p$-roots of $g_d$ so that
\begin{align*}
    N_d = \# \lcrc{t \in \bF_p : 4t^3 - 3t = \frac{d-3}{d+3} }.
\end{align*}
Since $K = \bQ(\alpha_d, \sqrt{d}, \sqrt{-3})$, $g_d$ is the irreducible polynomial of $\alpha_d$, and $p \equiv 1 \pmod{3}$, we have
\begin{align*}
    N_d = \begin{cases}
        3 & \textrm{if } f_K = 1, \\
        1 & \textrm{if } f_K = 2, \\
        0 & \textrm{if } f_K = 3. 
    \end{cases}
\end{align*}
Hence, 
\begin{align*}
    \sum_{\overline{d} \in \bF_p^\times } w(\overline{d}) = \sum_{d \in \bF_p \setminus \lcrc{0, -3}} \quadsym{6(d+3)}{p}(N_d - 1).
\end{align*}
Using the bijection $d \mapsto \kappa_d := \frac{d-3}{d+3}$ in Lemma \ref{lem: twisted d counting}, 
\begin{align*}
    d+3 = 3 \lbrb{ \frac{1+\kappa}{1-\kappa} + 1} = \frac{6}{1 - \kappa}.
\end{align*}
Then, we have
\begin{align*}
    \sum_{\overline{d} \in \bF_p^\times } w(\overline{d}) 
    &= \sum_{ \kappa \in \bF_p \setminus \lcrc{\pm 1}} \quadsym{1-\kappa}{p}(N_d - 1)
    = \sum_{\kappa \neq \pm 1 } \sum_{t \in \bF_p} \quadsym{1-\kappa}{p } 1_{4t^3 - 3t = \kappa} - \sum_{\kappa \neq \pm 1} \quadsym{1-\kappa}{p} \\
    & = \sum_{\substack{t \in \bF_p \\ 4t^3 - 3t \neq \pm 1 }} \quadsym{1-(4t^3 - 3t) }{p } - \sum_{\kappa \neq \pm 1} \quadsym{1-\kappa}{p} \\
    & = \sum_{\substack{t \in \bF_p \\ 4t^3 - 3t \neq \pm 1 }} \quadsym{1-t }{p } - \sum_{\kappa \neq \pm 1} \quadsym{1-\kappa}{p} + O(1)
\end{align*}
which is $O(1)$.
Hence by Lemma \ref{lem:sqf sieve}, we have
\begin{align*}
    \eqref{eqn: big bracket} \ll \frac{|S(X)|}{\sqrt{p}} + p^{\frac{3}{2}}\sqrt{X}.
\end{align*}
Therefore, 
\begin{align*}
    S_1 &\ll \frac{2}{|S(X)|\log X}  \sum_{\substack{p \leq X^{\sigma} \\ p\equiv 1 \,(3)}} \frac{\log p}{p} \lbrb{\frac{|S(X)|}{\sqrt{p}} + p^{\frac{3}{2}}\sqrt{X}} + O\lbrb{\frac{1}{\log X}} \\
    &\ll \frac{1}{X \log X} \sum_{\substack{p \leq X^{\sigma} \\ p\equiv 1 \,(3)}} \sqrt{p} \log p + O \lbrb{\frac{1}{\log X}} \\
    & \ll \frac{1 + X^{\frac{3\sigma}{2} - \frac{1}{2}} }{\log X}.
\end{align*}
Hence, we obtain the conclusion.
\end{proof}

\begin{proposition} \label{prop: S2}
Suppose that $\mathrm{supp}(\widehat{\phi}) \subset [-\sigma, \sigma]$ for $\sigma > 0$.
Then,
\begin{align*}
    S_2 = - \frac{1}{2} \phi(0) + O\lbrb{\frac{1}{\log X} \lbrb{1 + X^{\frac{\sigma}{2} - \frac{1}{2}}}}
\end{align*}
\end{proposition}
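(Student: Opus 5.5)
The plan is to evaluate $S_2$ prime by prime. First I would tabulate $b_{p^2}(C_d)=(a_{p,1}^2-2a_{p,2})/p$ for each row of Proposition \ref{prop: list Lp}. Then I would average over $d$ using Lemma \ref{lem:sqf sieve} together with the counting in Lemma \ref{lem: twisted d counting}. Finally I would sum over $p$ by partial summation.

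\textbf{Step 1: bad primes.} The primes $p=2,3$ contribute $O(1/\log X)$, since $|b_{p^2}|\le 4$. By Proposition \ref{prop: bad ap}, a bad prime $p\ge5$ divides $d$ or $d+3$, and there $b_{p^2}\ll 1$. The number of such $d\in S(X)$ is $\ll X/p$, so these primes contribute
\begin{align*}
\ll \frac{1}{\log X}\sum_p \frac{\log p}{p^2}\ll\frac{1}{\log X}.
\end{align*}

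\textbf{Step 2: good primes $p\equiv 1 \pmod 3$.} Here $f_M(p)=1$. A direct computation from Proposition \ref{prop: list Lp} shows that every admissible row gives
\begin{align*}
b_{p^2}=\pm c\,\frac{a_p(E_0)^2-2p}{p}, \qquad c\in\{1,2\}.
\end{align*}
Concretely, $2(a_p^2-2p)/p$ occurs for $(1,1,1),(2,1,1),(2,2,1)$, and $-(a_p^2-2p)/p$ occurs for $(3,3,1),(6,3,1)$. The rows $(4,2,1)$ and $(6,6,1)$ cannot occur, because $\Gal(K_d/\bQ)\cong S_3\times C_2$ and the extra quadratic $\sqrt{6(d+3)}$ only doubles odd residue degrees, by \eqref{eqn: fL fK}. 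The inner sum over $d$ is therefore $(a_p(E_0)^2-2p)/p$ times a weighted count of $d$. By Lemma \ref{lem:sqf sieve} this count equals $|S(X)|\cdot O(1)+O(p\sqrt X)$. The main part then reduces to the sum in \eqref{eqn: ap^2 weight E0-2p}, which is $O(1/\log X)$. The error part gives
\begin{align*}
\frac{1}{X\log X}\sum_{p\le X^{\sigma/2}}\sqrt X\,\log p\ll \frac{X^{\frac{\sigma}{2}-\frac12}}{\log X}.
\end{align*}
One point needs care: the coefficient of $(a_p^2-2p)$ is a density that depends on $p$, not a constant. I expect it to be $|S(X)|(\kappa+O(1/p))$ for a fixed rational $\kappa$, computed from Table \ref{tab:gd-splitting-mod3}. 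The $O(1/p)$ part is then absorbed trivially, and the $\kappa$ part is handled by \eqref{eqn: ap^2 weight E0-2p}.

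\textbf{Step 3: good primes $p\equiv 2 \pmod 3$.} This gives the main term. Here $a_p(E_0)=0$ and $f_M=2$. Since $\sqrt{-3}\in K_d$, the degree $f_K$ is even, and \eqref{eqn: fL fK} then forces $f_L=f_K\in\{2,6\}$. The possible types are therefore $(2,2,2)$, with $b_{p^2}=-4$, and $(6,6,2)$, with $b_{p^2}=2$. Type $f_K=6$ occurs exactly when $g_d$ is irreducible mod $p$, which by Table \ref{tab:gd-splitting-mod3} happens for $\frac{p+1}{3}$ residues. Lemma \ref{lem:sqf sieve} then gives the average of $b_{p^2}$ over $d$ as
\begin{align*}
-4\cdot\tfrac23+2\cdot\tfrac13+O(1/p)=-2+O(1/p),
\end{align*}
with error $O(p\sqrt X)$. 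The errors are handled as in Step 2. For the main term, the prime number theorem in progressions and partial summation give
\begin{align*}
\sum_{p\equiv2\,(3)}\frac{\log p}{p}\,\widehat\phi\!\left(\frac{2\log p}{\log X}\right)=\frac{\log X}{2}\int_0^\infty\widehat\phi(2u)\,du+O(1)=\frac{\log X}{8}\phi(0)+O(1),
\end{align*}
using $\int_{\bR}\widehat\phi=\phi(0)$ and that $\widehat\phi$ is even. Hence
\begin{align*}
S_2=\frac{2}{\log X}\cdot(-2)\cdot\frac{\log X}{8}\phi(0)+\cdots=-\tfrac12\phi(0)+\cdots,
\end{align*}
which is the claimed main term.

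\textbf{Main obstacle.} I expect the hardest part to be the bookkeeping that identifies exactly which Frobenius types $I(p)$ occur, and with what densities, in terms of $d \bmod p$. This requires reconciling Lemma \ref{lem: twisted d counting} with $f_K$ and $f_L$. In particular, for $p\equiv 1\pmod 3$ one must check that the density coefficients are constant up to $O(1/p)$, so that \eqref{eqn: ap^2 weight E0-2p} applies.
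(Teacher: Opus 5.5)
Your proposal is correct and follows the paper's proof essentially step for step. You use the same tabulation of $b_{p^2}$ by Frobenius type and the same split by $p \bmod 3$, with Lemma \ref{lem: twisted d counting} and Lemma \ref{lem:sqf sieve} for the densities, \eqref{eqn: ap^2 weight E0-2p} for $p\equiv 1\pmod 3$, and the prime number theorem in progressions for the $-\frac12\phi(0)$ coming from $p\equiv 2\pmod 3$. The constant you left as $\kappa$ is exactly $1$, since $2\cdot\frac{p-7}{6}+2\cdot\frac{p-1}{2}-\frac{p-1}{3}=p-3$, which is the value the paper uses.
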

\begin{proof}
By Weil bound, we have $|b_{p^2}(J_d)| \leq 4$.
Hence, by a similar argument to the proof of Proposition \ref{prop: S1}, one can prove that the contribution of $p \mid d$ and $p \mid (d+3)$ is negligible.
By Lemma \ref{lem: fd splitting field} and Proposition \ref{prop: L bound}, we have $\Gal(L/\bQ) \cong S_3 \times C_2$ or $S_3 \times C_2^2$.
Therefore, $I(p)$ cannot be $(4, 2, 1)$, $(4, 2, 2)$, or $(12, 6, 2)$ since there is no element of order $4$ or $12$ in $\Gal(L/\bQ)$.

We suppose first that $p \equiv 1 \pmod{3}$.
By \eqref{eqn: fL fK} and Proposition \ref{prop: list Lp}, we have
\begin{align*}
    b_{p^2}(C_d) = 
    \begin{cases}
        2 \lbrb{\frac{a_p(E_0)^2}{p} - 2} & \textrm{if } f_K(p) = 1, 2 \\
        - \lbrb{\frac{a_p(E_0)^2}{p} - 2} & \textrm{if } f_K(p) = 3.
    \end{cases}
\end{align*}
We recall that $K$ depends on $d$. When $p \equiv 1 \pmod{3}$, let
\begin{align*}
    N_{j, 1} := | \lcrc{d \in S(X) : p \nmid d(d+3), f_K(p) = j}|.
\end{align*}
Similarly we define $N_{j, 2}$ for the prime $p \equiv 2 \pmod{3}$.
Then the summand over $p \equiv 1 \pmod{3}$ in $S_2$ with $p \nmid d(d+3)$ is
\begin{align*}
    &\frac{2}{|S(X)|\log X} \sum_{d \in S(X)} \sum_{\substack{p \equiv 1 \, (3) \\ p \nmid d(d+3)}} \frac{ \log p }{p} b_{p^2}(C_d)\widehat{\phi}\lbrb{\frac{2\log p}{\log X}} \\
    &=\frac{2}{|S(X)|\log X} \sum_{p \equiv 1 \, (3)}  \frac{ \log p }{p}   \widehat{\phi}\lbrb{\frac{2\log p}{\log X}}  \lbrb{\frac{a_p(E_0)^2}{p} - 2} \lbrb{2N_{1,1} + 2N_{2,1} - N_{3,1} }.
\end{align*}
By Lemma \ref{lem: twisted d counting} and Lemma \ref{lem:sqf sieve}, we have
\begin{align*}
    N_{j, 1} = \frac{1}{1-p^{-2}} \frac{c_{j, 1}}{p} |S(X)| + O\lbrb{p\sqrt{X}}
\end{align*}
for $c_{1,1}  = \frac{p-7}{6}, c_{2, 1} = \frac{p-1}{2}, c_{3, 1} = \frac{p-1}{3}$.
Therefore, the sum for $p \equiv 1 \pmod{3}$ is
\begin{align*}
    \frac{2}{\log X} \sum_{p \equiv 1 \, (3)}  \frac{ \log p }{p}   \widehat{\phi}\lbrb{\frac{2\log p}{\log X}}  \lbrb{\frac{a_p(E_0)^2}{p} - 2}\lbrb{1 + O\lbrb{\frac{1}{p} + \frac{p}{\sqrt{X}} } }
\end{align*}
The main term part is $O\lbrb{\frac{1}{\log X}}$ by \eqref{eqn: ap^2 weight E0-2p}.
The error term gives
\begin{align*}
    \ll \frac{1}{\sqrt{X}\log X} \sum_{p \leq X^{\frac{\sigma}{2}}} \log p \ll \frac{X^{\frac{\sigma}{2} - \frac{1}{2}}}{\log X}.
\end{align*}

Let $p \equiv 2 \pmod{3}$.
Then, $f_M(p) = 2$, $f_K(p)$ is $2$ or $6$, and $f_L(p) = f_K(p)$ by \eqref{eqn: fL fK}.
Hence, by Proposition \ref{prop: list Lp}, 
\begin{align*}
    b_{p^2}(C_d) = \begin{cases}
        -4, & \textrm{if } f_K(p) =2, \\
        2, & \textrm{if } f_K(p) =6.
    \end{cases}
\end{align*}
Then the summand over $p \equiv 2 \pmod{3}$ in $S_2$ with $p \nmid d(d+3)$ is
\begin{align*}
    \frac{2}{|S(X)|\log X} \sum_{p \equiv 2 \, (3)}  \frac{ \log p }{p}   \widehat{\phi}\lbrb{\frac{2\log p}{\log X}} \lbrb{-4N_{2,2} + 2N_{6, 2} },
\end{align*}
where
\begin{align*}
    N_{j, 2} = \frac{1}{1 - p^{-2}} \frac{c_{j, 2}}{p} |S(X)| + O(p\sqrt{X})
\end{align*}
for $c_{2, 2} = \frac{2p-7}{3}$ and $c_{6, 2} = \frac{p+1}{3}$ by Lemma \ref{lem: twisted d counting}.
Again by Lemma \ref{lem:sqf sieve}, the sum over $p \equiv 2 \pmod{3}$ is
\begin{align*}
    \frac{2}{|S(X)|\log X} \sum_{p \equiv 2 \, (3)}  \frac{ \log p }{p}   \widehat{\phi}\lbrb{\frac{2\log p}{\log X}} \lbrb{ -2 |S(X)| + O \lbrb{\frac{X}{p} + p\sqrt{X} }  }.
\end{align*}
By the prime number theorem in arithmetic progression, we have
\begin{align*}
    -\frac{4}{\log X} \sum_{p \equiv 2 \, (3)} \frac{\log p}{p} \widehat{\phi} \lbrb{\frac{2 \log p}{ \log X}} = -\frac{\phi(0)}{2} + O\lbrb{\frac{1}{\log X}}.
\end{align*}
Hence, we have the desired consequence.
\end{proof}

\begin{proof}[Proof of Theorem \ref{thm: AAR}]
We choose a test function
\begin{align*}
    \phi(x) = \frac{\sin^2(2 \pi x \frac{1}{2}\sigma )}{(2 \pi x)^2}
\end{align*}
whose Fourier inversion is
\begin{align*}
    \widehat{\phi}(u) = \frac{1}{2}\lbrb{\frac{1}{2}\sigma - \frac{1}{2}|u|}.
\end{align*}
Hence the support of $\widehat{\phi}$ is $\left[ - \sigma , \sigma \right]$, and
we note that $\frac{\widehat{\phi}(0)}{\phi(0)} = \frac{1}{\sigma}$.
By Proposition \ref{prop: S1} and Proposition \ref{prop: S2},  (\ref{eqn: average rank bound}) is
\begin{align*}
\frac{1}{|S(X)|} \sum_{d \in S(X)} g_d 
    &\leq (6+o(1))\frac{ \widehat{\phi}(0)}{\phi(0)}
    - \frac{1}{\phi(0)}S_1 - \frac{1}{\phi(0)}S_2
    + O\lbrb{\frac{1}{\log X}} \\
    &= \frac{1}{2} +   (6+o(1))\frac{1}{\sigma} + O\lbrb{ \frac{1 + X^{\frac{3\sigma}{2}- \frac{1}{2}} + X^{\frac{\sigma}{2} - \frac{1}{2} }   }{\log X}    }.
\end{align*}
By taking $\sigma = \frac{1}{3} - \epsilon$ for arbitrary $\epsilon > 0$, we have an upper bound $\frac{1}{2} + 18$.
\end{proof}

\subsection{Type B} \label{sec: other types}

Recall that there are $6$ conjugacy classes $D_{12}^A, \cdots, D_{12}^F$ in $C_2 \times D_{12}$, which are isomorphic to $D_{12}$ as an abstract group.
When the base field is $\bQ$, the type $D_{12}^E$ does not appear, and the other types have a non-trivial Brauer group condition except $D_{12}^B$ by \cite[\S 7]{CL}.
For the type $D_{12}^B$, the parameter $(u, v)$ is $(d, -3)$ by \cite[\S 7]{CL}.
In this case, the equation 
\begin{align*}
    u^3 - z^2 = 3s^2v
\end{align*}
in Definition \ref{def:uvsz} has a solution
\begin{align*}
    z = \frac{d(d+1)}{2}, \qquad s = \frac{d(d-1)}{6}.
\end{align*}
By Theorem \ref{thm:CLProp34}, we have a parametrization of twists of $D_{12}^B$-type of $C_0$, which is $y^2 = f_d(x)$ where
\begin{align*}
    f_d(x) &= \frac{27(d^2+d)}{2}x^6 + 81(d^2-d)x^5 + \frac{405(d^2+d)}{2}x^4 +270(d^2-d)x^3\\
    & + \frac{405(d^2+d)}{2}x^2 + 81(d^2-d)x + \frac{27(d^2 + d)}{2}.
\end{align*}

However, the Jacobian of the curve is not simple.

\begin{proposition} \label{lem: not simple}
Let $d \neq 0, \pm 1$ be an integer and let $C_d/\bQ$ be the curve defined by the equation $y^2 = f_d(x)$.
Then, the Jacobian of the curve $C_d$ is $\bQ$-isogenous to $E_{d^4} \times E_{d^5}$ where
\begin{align*}
    E_{d^4} : y^2 = x^3 + 2^33^3d^4, \qquad
    E_{d^5} : y^2 = x^3 + 2^33^3d^5.
\end{align*}
\end{proposition}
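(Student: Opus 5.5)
The plan is to exhibit an involution of $C_d$ that is defined over $\bQ$. The two elliptic quotients it produces should then split $\Jac(C_d)$ up to $\bQ$-isogeny, and the last step is to identify those quotients with $E_{d^4}$ and $E_{d^5}$. The starting observation is that $f_d$ is palindromic: the coefficients of $x^{6-i}$ and $x^i$ agree for every $i$. So
\begin{align*}
    \iota : (x,y) \longmapsto \lbrb{\frac{1}{x}, \frac{y}{x^3}}
\end{align*}
is an automorphism of $C_d$ defined over $\bQ$. Together with the hyperelliptic involution $h$, it generates a $\bQ$-rational Klein four-group $\lara{\iota, h}$. By the standard decomposition for a genus $2$ curve with a non-hyperelliptic rational involution, $J_d$ is $\bQ$-isogenous to $C_d/\lara{\iota} \times C_d/\lara{\iota h}$, and both quotients are genus $1$ curves over $\bQ$.

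To make the quotients explicit, I will write $f_d(x) = x^3 F(t)$ with $t = x + x^{-1}$. This uses $x^2 + x^{-2} = t^2 - 2$ and $x^3 + x^{-3} = t^3 - 3t$, so
\begin{align*}
    F(t) = \frac{27(d^2+d)}{2}(t^3 - 3t) + 81(d^2-d)(t^2-2) + \frac{405(d^2+d)}{2}t + 270(d^2-d).
\end{align*}
The functions $t$ and $Y_{\pm} = y(x \pm 1)/x^2$ are invariant under $\iota$, respectively under $\iota h$. They satisfy
\begin{align*}
    Y_+^2 = (t+2)F(t), \qquad Y_-^2 = (t-2)F(t).
\end{align*}
Each of these is a quartic in $t$ with a rational root $t = \mp 2$. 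Moving that root to infinity, via $t = \mp 2 + 1/T$ and rescaling, turns each quartic model into a Weierstrass cubic over $\bQ$. This gives two elliptic curves $E_+$ and $E_-$ over $\bQ$ with $J_d \sim_{\bQ} E_+ \times E_-$.

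It then remains to show that $E_+$ and $E_-$ are $\bQ$-isomorphic, or at least $\bQ$-isogenous, to $E_{d^4}$ and $E_{d^5}$ in some order. Since $C_d$ is a twist of $C_0$ and $\Jac(C_0)$ is isogenous to a power of $E_0 : y^2 = x^3 + 1$, both $E_{\pm}$ must have $j = 0$. I will confirm this directly by checking that $c_4 = 0$ for each reduced cubic. A curve with $j=0$ has the form $y^2 = x^3 + D$, and its $\bQ$-isomorphism class is determined by $D \in \bQ^\times/(\bQ^\times)^6$. So the identification reduces to computing $D$, essentially $-c_6/864$, for each of $E_{\pm}$, and comparing it with $2^3 3^3 d^4$ and $2^3 3^3 d^5$ modulo sixth powers. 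Plausibly the cubic $F(t)(t\pm 2)$ has a convenient shape that makes the depressed cubic visibly $x^3 + \text{const}$ after a translation.

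I expect this last bookkeeping step to be the main obstacle. The classes of $d^4$ and $d^5$ modulo sixth powers differ by the sextic twist by $d$, so the exact constant, including the factors $2^3 3^3$, must come out right. If the direct computation becomes messy, my fallback is to confirm the answer by comparing traces of Frobenius. For several good primes $p$, I would compute $\#C_d(\bF_p)$ and $\#C_d(\bF_{p^2})$ and check that they match $a_p(E_{d^4}) + a_p(E_{d^5})$; by Faltings this is enough for $\bQ$-isogeny once the candidate pair has been pinned down up to finitely many possibilities.
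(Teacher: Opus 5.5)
Your route is the paper's route in different coordinates. The paper substitutes $t=\frac{x-1}{x+1}$, $u=\frac{y}{(x+1)^3}$. This turns your $\iota$ into $(t,u)\mapsto(-t,u)$ and turns $C_d$ into $u^2=\frac{27d}{2}(t^6+d)$. Its two maps $(t,u)\mapsto(t^2,u)$ and $(t,u)\mapsto(d/t^2,du/t^3)$ are exactly the quotients by $\iota$ and by $\iota h$, followed by the same splitting of $J_d$ that you invoke.

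However, as written your proposal stops short of a proof. You leave the identification of $E_\pm$ undone, and that identification is the whole content of the statement. It closes quickly once you notice
\begin{align*}
    f_d(x)=\frac{27d}{2}\lbrb{(x-1)^6+d(x+1)^6}, \qquad\text{equivalently}\qquad F(t)=\frac{27d}{2}\lbrb{d(t+2)^3+(t-2)^3},
\end{align*}
using $t\pm 2=(x\pm1)^2/x$.

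For $E_+$, put $t=-2+1/T$, $W=YT^2$, $s=1-4T$. Then $Y_+^2=(t+2)F(t)$ becomes $W^2=\frac{27d}{2}(s^3+d)$. For $E_-$, put $t=2+1/T$, $s=1+4T$. Then $Y_-^2=(t-2)F(t)$ becomes $W^2=\frac{27d}{2}(ds^3+1)$, and $r=ds$, $W\mapsto dW$ turn this into $W^2=\frac{27d}{2}(r^3+d^2)$. These are exactly the paper's two curves.

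Finally, $y^2=c(x^3+D)$ is $\bQ$-isomorphic to $Y^2=X^3+c^3D$ via $X=cx$, $Y=cy$. For $c=\frac{27d}{2}$ one has $c^3=(3/2)^6\cdot 2^3 3^3 d^3$. Removing the sixth power gives $E_+\cong E_{d^4}$ and $E_-\cong E_{d^5}$. The paper's coordinate simply makes this visible at once, since it diagonalizes $\iota$.

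Two of your auxiliary remarks should be dropped. First, being a twist of $C_0$ does not force $j(E_\pm)=0$. A curve isogenous to $E_0$ over $\Qbar$ can have $j\neq 0$. For $C_0$ itself, the quotient by the $\bQ$-rational involution $x\mapsto 1/x$ is the Jacobian of $Y^2=(t+2)(t^3-3t)$, which has $j=54000$. Here $j=0$ holds because $\iota$ corresponds to the central rotation $t\mapsto -t$ of the sextic model, and only the explicit computation shows it.

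Second, the Frobenius-trace fallback cannot prove a statement for all integers $d$. Any effective Faltings-type criterion needs primes up to a bound that grows with the conductor, hence with $d$. Checking finitely many primes therefore establishes nothing for the whole family.
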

\begin{proof}
We can rewrite $f_d$ as follows:
\begin{align*}
    f_d=\frac{27d}{2}\left((x-1)^6+d(x+1)^6\right).
\end{align*}
The coordinate change
\begin{align*}
    t=\frac{x-1}{x+1},\quad u=\frac{y}{(x+1)^3}
\end{align*}
yields another defining equation for $C_d$:
\begin{align*}
    u^2=\frac{27d}{2}(t^6+d).
\end{align*}
Then we have surjections onto elliptic curves over $\mathbb{Q}$ as follows:
\begin{align*}
    \xymatrix{\pi_1:C_d \ar[r] & \displaystyle\left\{u^2 = \frac{27d}{2}(t^3 + d)\right\} & (t,u) \ar@{|->}[r] & (t^2,u) }
\end{align*}
\begin{align*}
    \xymatrix{\pi_2:C_d \ar[r] & \displaystyle\left\{u^2 = \frac{27d}{2}(t^3 + d^2)\right\} & (t,u) \ar@{|->}[r] & \displaystyle\left(\frac{d}{t^2},\frac{du}{t^3}\right)\rlap{\ .}}
\end{align*}
The former is isomorphic over $\mathbb{Q}$ to $E_{d^4}:y^2=x^3+2^{3}3^{3}d^{4}$ and the latter to $E_{d^5}:y^2=x^3+2^{3}3^{3}d^{5}$. Hence, $\Jac(C_d)$ is isogenous to $E_{d^4}\times E_{d^5}$.
\end{proof}

\end{document}